\documentclass[final]{siamart1116}

\usepackage{lipsum}
\usepackage{amsfonts,amsmath,centernot,stmaryrd}
\usepackage{centernot}
\usepackage{adjustbox,lipsum}
\usepackage{subcaption}
\usepackage{soul}
\usepackage{cite}
\usepackage{xfrac}
\usepackage{hyperref}

\newcommand{\mcM}{\mathcal{M}}
\newcommand{\mcN}{\mathcal{N}}
\newcommand{\mcP}{\mathcal{P}}
\newcommand{\mcR}{\mathcal{R}}
\newcommand{\mcW}{\mathcal{W}}
\newcommand{\mcZ}{\mathcal{Z}}
\newcommand{\tmcP}{\tilde{\mcP}}
\newcommand{\tmcR}{\tilde{\mcR}}

\newcommand{\tcb}[1]{\textcolor{black}{#1}}
\newcommand{\tcbb}[1]{\textcolor{black}{#1}}

\ifpdf  \DeclareGraphicsExtensions{.eps,.pdf,.png,.jpg}
\else
  \DeclareGraphicsExtensions{.eps}
\fi
\usepackage{amsopn}
\DeclareMathOperator{\diag}{diag}
\usepackage{booktabs}

\mathchardef\mhyphen="2D
\newcommand{\TheTitle}{Convergence in Norm of Nonsymmetric\\Algebraic Multigrid
}

\newcommand{\TheShortTitle}{Convergence in Norm of Nonsymmetric Algebraic Multigrid}
\newcommand{\TheName}{T. A. Manteuffel and B. S. Southworth
}
\newcommand{\TheAddress}{  University of Colorado Boulder,
  (\email{ben.s.southworth@gmail.com}).
}
\newcommand{\TheFunding}{This research was conducted with Government support under and awarded by DoD, Air Force Office of Scientific Research, National Defense
Science and Engineering Graduate (NDSEG) Fellowship, 32 CFR 168a. This work was performed under the auspices of the U.S. Department
of Energy by Lawrence Livermore National Laboratory under Contract DE-AC52-07NA27344 and B600360 and the U.S. Department of Energy
under grant numbers (SC) DE-FC02-03ER25574 and (NNSA) DE-NA0002376\@.
}

\author{\TheName\thanks{\TheAddress}}
\author{
Tom Manteuffel \thanks{Department of Applied Mathematics, University of Colorado Boulder
    (\email{tmanteuf@colorado.edu}, \email{ben.s.southworth@gmail.com}).}
   \and
Ben S. Southworth \footnotemark[2] 
}
\title{{\TheTitle}\thanks{\TheFunding}}
\headers{\TheShortTitle}{\TheName}
\ifpdf\hypersetup{  pdftitle={\TheTitle},
  pdfauthor={\TheName}
}
\fi

\begin{document}
\allowdisplaybreaks

\maketitle
\vspace{1cm}

\begin{abstract}
Algebraic multigrid (AMG) is one of the fastest numerical methods for solving large sparse linear systems. For SPD matrices, convergence
of AMG is well motivated in the $A$-norm,  and AMG has proven to be an effective solver for many
applications. Recently, several AMG algorithms have been developed that are effective on nonsymmetric linear systems.
Although motivation was provided in each case, the convergence of AMG for nonsymmetric linear systems
is still not well understood, and algorithms are based largely on heuristics or incomplete theory.

For multigrid restriction and interpolation operators, $R$ and $P$, respectively, let $I - \Pi:= I - P(RAP)^{-1}RA$ denote the projection
corresponding to coarse-grid correction in AMG. It is invariably the case in
the nonsymmetric setting that $\|\Pi\| = \|I - \Pi\| > 1$ in any known norm. This causes an interesting dichotomy: coarse-grid correction is
fundamental to AMG achieving fast convergence, but, in this case, can actually increase the error. Here, we present a detailed
analysis of nonsymmetric AMG, discussing why SPD theory breaks down in the nonsymmetric setting, and developing a general
framework for convergence of NS-AMG. Classical multigrid weak and strong approximation properties are generalized
to a \textit{fractional approximation property}. Conditions are then developed on $R$ and $P$ to ensure that $\|\Pi\|_{\sqrt{A^*A}}$
is nicely bounded, independent of problem size. This is followed by the development of conditions for two-grid and multilevel
W-cycle convergence in the $\sqrt{A^*A}$-norm.
\end{abstract}

\begin{keywords}
Algebraic Multigrid, Nonsymmetric. 
\end{keywords}

\section{Introduction} \label{sec:intro}

Large, sparse, nonsymmetric linear systems arise in a number of applications involving directed graph Laplacians, Markov chains, and the
discretization of partial differential equations (PDEs). Algebraic multigrid (AMG) is a multilevel iterative method for solving large sparse
linear systems based on projecting the problem into progressively smaller subspaces. AMG is traditionally motivated for symmetric
positive definite (SPD) linear systems and M-matrices \cite{Brandt:1985um,Ruge:1987vg},
and has shown to be a robust and scalable solver for many such problems. Consistent with other approximate direct solvers,
iterative methods, and Krylov methods, convergence theory in the case of SPD matrices is relatively well-understood \cite{Brandt:1985um,
Falgout:2004cs,Falgout:2005hm,MacLachlan:2014di,Notay:2014uc,Ruge:1987vg,Van:2001bw,Vassilevski:2008wd,Vassilevski:2010vy,Zikatanov:2008jp}.
Although AMG solvers have been developed that can be effective on nonsymmetric problems in various settings (for example,
\cite{Sala:2008cv,Manteuffel:2017,fox2018algebraic,air1,air2,Wiesner:2014cy,Yavneh:2012fb,Notay:2000vy,Seibold,Lottes:2017jz}),
few results have been proven regarding convergence of nonsymmetric AMG (NS-AMG). 

Typically in AMG, simple relaxation schemes are used and the focus of theory and algorithm development is on effective and
complementary coarse-grid correction. For a nonsingular matrix $A\in\mathbb{C}^{n\times n}$, a coarse-grid problem is defined by
projecting $A$ into a subspace using {restriction} and {interpolation} operators, $R, P \in\mathbb{C}^{n \times n_c}$, respectively,
and inverting the coarse-grid operator $A_c := R^*AP \in\mathbb{C}^{n_c\times n_c}$. If $A_c$ is too large to invert directly,
AMG is called recursively on the coarse-grid problem. For SPD matrices, convergence is
considered in the so-called \textit{energy-norm} or $A$-norm, $\|\mathbf{x}\|_A^2 = \langle A\mathbf{x},\mathbf{x}\rangle$. 
Letting $R = P$, coarse-grid correction is an orthogonal projection onto the range of $P$ in the $A$-norm. The focus of AMG for
SPD problems is then on building a ``good'' $P$. In the non-SPD setting, $\langle A\mathbf{x},\mathbf{x}\rangle$ is not well
defined. A key implication of this is that coarse-grid correction in NS-AMG is generally a non-orthogonal projection in any known
inner product, which means that it can increase error. This poses an interesting dichotomy: coarse-grid correction is a principle
mechanism by which AMG reduces error, but, in this case, it may also increase error at times. This makes convergence theory
difficult to develop, as any potential increase in error due to coarse-grid correction must be overcome by other means.
\tcbb{Because of this, there is a need for nonsymmetric convergence theory that can motivate how to build $R$ and $P$ in a
compatible sense for a well-posed, nicely bounded (in norm) coarse-grid correction.}

The simplest measure of NS-AMG convergence is the spectral radius of error propagation, which bounds asymptotic
convergence \cite{Notay:2010em,air2,Wiesner:2014cy}. Although the spectral radius can provide
motivation in developing NS-AMG, it is not necessarily indicative of practical performance. Recently, it was suggested that the
field of values is a more appropriate measure \cite{notay2018}, consistent with previous work on nonsymmetric linear
systems as early as \cite{manteuffel1977tchebychev}. A proof of two-grid convergence was given in \cite{Lottes:2017jz}
for nonsymmetric matrices with positive real parts in the \textit{form absolute value} norm.
A significant theoretical framework was used to develop the form absolute value as a generalization of the $A$-norm
for nonsymmetric matrices. However, the norm is difficult to compute or interpret in practice and leaves open questions on
the respective roles of interpolation and restriction in NS-AMG. In \cite{Brezina:2010dm}, the $A$-norm was generalized to the
nonsymmetric setting by considering the $\sqrt{A^*A}$-norm, and sufficient conditions were derived for two-grid convergence.
However, the conditions in \cite{Brezina:2010dm} include an assumption that the non-orthogonal coarse-grid correction is
bounded in norm by some small constant. This assumption is one of the fundamental difficulties with NS-AMG and, again,
leaves open questions on how to build $R$ and $P$ in the nonsymmetric setting.

This paper builds on the nonsymmetric framework developed in \cite{Brezina:2010dm}. \tcbb{Background on the nonsymmetric
setting and a new generalization of multigrid approximation properties is presented in Section \ref{sec:2grid:background},
followed by the development of general conditions on $R$ and $P$ for bounded coarse-grid corrections and
two-grid convergence in Sections \ref{sec:2grid:basis}
and \ref{sec:2grid:stability}. Section \ref{sec:multigrid} extends these results to the multilevel setting, establishing sufficient conditions
for $W$-cycle convergence. Although one of the conditions on $R$ and $P$ is not easy to establish, it offers insight into
the development of AMG methods for nonsymmetric systems. Moreover, this is the first general result 
on convergence in norm of NS-AMG.}\footnote{A reduction-based NS-AMG
method was developed simultaneously with this work in \cite{air1}. There, sufficient conditions are developed for two-grid
convergence of error in the $\ell^2$- and $A^*A$-norms. Results here take a more traditional AMG approach 
(as opposed to reduction based), and develop a more detailed analysis of the multilevel setting.} 
\tcbb{In Section \ref{sec:numerics} several choices of transfer operators and the resulting non-orthogonal coarse-grid corrections
are analyzed numerically for two discretizations of a hyperbolic PDE.}
A discussion on results and their relation to recently developed, effective NS-AMG solvers is given in Section \ref{sec:conclusion}.

\section{Two-grid convergence} \label{sec:2grid}

\subsection{Background, Definitions, and Assumptions}\label{sec:2grid:background}

Multigrid originated in the geometric setting, applied to elliptic differential operators. There, the $A$-norm corresponds with 
the $\mathcal{H}^1$-Sobolev norm, which enforces accuracy of solution values \textit{and} derivatives. This avoids approximate
solutions with large oscillations and non-physical behavior that can occur when minimizing, for example, the $l^2$-norm. Such
behavior is desirable when considering nonsymmetric problems as well, motivating a $\sqrt{A^*A}$- or $\sqrt{AA^*}$-
generalization of the $A$-norm \cite{Brezina:2010dm}. Let $A\in\mathbb{C}^{n\times n}$ be nonsingular with singular value
decomposition (SVD) $A = U\Sigma V^*$ and singular values ordered such that $0< \sigma_1 \leq \sigma_2 \leq \dots \leq \sigma_n$.
Defining $Q := VU^*$, then $\sqrt{A^*A} = QA = V\Sigma V^*$ and $\sqrt{AA^*} = AQ = U\Sigma U^*$.
Because $\sqrt{A^*A}$ and $\sqrt{AA^*}$ are SPD, we can solve $A\mathbf{x}=\mathbf{b}$ by applying classical AMG techniques
to the equivalent (SPD) linear systems
\begin{align}
QA \mathbf{x} &= Q\mathbf{b}, \label{eq:normal1}\\
AQ\mathbf{y} &= \mathbf{b} \mbox{ for } \mathbf{x} = Q\mathbf{y}. \nonumber
\end{align}
Although $Q$ is difficult to form in practice, these systems provide a framework for convergence of NS-AMG.\footnote{
Note that \eqref{eq:normal1} resembles a normal-equation formulation of the problem. However, AMG is typically applied to large,
sparse, ill-conditioned matrices, and solving the normal equations squares the condition number. Because $Q$ is unitary,
the condition number of $QA$ equals that of $A$.} In particular, classical AMG approximation properties can be considered
with respect to SPD matrices $QA$ and $AQ$, corresponding to the right and left singular vectors.  

\tcb{Coarse-grid correction in multigrid approximates the action of $A^{-1}$ with the operator $PA_c^{-1}R^*$; that is, it restricts
the problem to a subspace, inverts the coarse-grid operator $A_c$ in the subspace, and interpolates the result back to the fine
grid. Error propagation of coarse-grid correction is given as a projection onto the range of $P$: 
\begin{align}
I-\Pi :=I - PA_c^{-1}R^*A,\label{eq:cgc}
\end{align}
Here, $I-\Pi$ corresponds to a two-level method, where the coarse-grid operator $A_c$ is inverted exactly. }
Given an interpolation operator $P$, defining $R:=Q^*P$ makes $I - \Pi$ a $QA$-orthogonal coarse-grid correction. In this case, classical AMG theory
applies, and the optimal $P$ with respect to two-grid convergence is given by letting columns of $P$ be the first $n_c$ right singular vectors,
where $n_c$ is the size of the coarse grid \cite{Falgout:2005hm}. It follows that the optimal $R$ then consists of the first $n_c$ left
singular vectors. Thus, in the nonsymmetric development that follows, we consider $P$ that satisfies some approximation property with
respect to $QA$ and $R$ that satisfies some approximation property with respect to $AQ$. Approximation properties on $P$ with respect to
$QA$ ensure that right singular vectors with small singular values are well represented in the range of $P$, denoted 
$\mathbf{R}(P)$, and likewise for $R$, $AQ$, and left singular vectors. {\color{black}The following definition introduces a new
generalization of classical multigrid approximation properties, called a \textit{fractional approximation property} (FAP).

\begin{definition}[Fractional Approximation Property: FAP($\beta,\eta$)]
A transfer operator $P$ is said to have a FAP with respect to the SPD matrix $\mathcal{A}$, with powers $\beta,\eta \geq 0$ and constant 
$K_{P,\beta,\eta}$, if, for every fine-grid vector, 
$\mathbf{v}$, there exists a coarse-grid vector, $\mathbf{v}_c$, such that
\begin{equation*}
\| \mathbf{v} - P\mathbf{v}_c \|^2_{\mathcal{A}^\eta} \leq \frac{K_{P,\beta,\eta}}{\| \mathcal{A} \|^{2\beta-\eta} }\langle \mathcal{A}^{2\beta}  \mathbf{v}, \mathbf{v} \rangle.
\end{equation*}
\end{definition}
The classical multigrid weak approximation property (WAP) is a FAP$(1/2,0)$, the strong approximation property (SAP)  is a
FAP$(1,1)$, and a super strong approximation property (SSAP) is a FAP$(1,0)$. The next result implies relationships
between various approximation properties.

\begin{theorem}\label{th:FAP}  
Let $P$ satisfy a FAP($\beta,\eta$) with respect to $\mathcal{A}$. Then,
\begin{enumerate}
\item $P$ satisfies a FAP($\alpha,\kappa$) for any $0\leq \alpha \leq \beta$ and $\kappa\geq\eta$, 
with constant $K_{P,\alpha,\kappa} \leq K_{P,\beta,\eta}$,
\item If, in addition, $\eta \leq \beta$, then $P$ satisfies a FAP($\beta,\kappa$) for any $0\leq \kappa  \leq \eta \leq \beta$, 
with constant $K_{P,\beta,\kappa} \leq K_{P,\beta,\eta}^2$.
\end{enumerate}
\end{theorem}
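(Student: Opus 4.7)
I treat the two claims separately. Claim~1 is a short consequence of spectral inequalities for the SPD matrix $\mathcal{A}$; Claim~2, which I expect to be the main obstacle, requires a second application of the FAP hypothesis.

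For Claim~1 I would fix $\mathbf{v}$ and reuse the coarse-grid vector $\mathbf{v}_c$ delivered by FAP$(\beta,\eta)$. Two consequences of the spectral theorem for SPD $\mathcal{A}$ suffice: for any $\mathbf{w}$ and any $\kappa\geq\eta$, $\|\mathbf{w}\|_{\mathcal{A}^\kappa}^2 \leq \|\mathcal{A}\|^{\kappa-\eta}\|\mathbf{w}\|_{\mathcal{A}^\eta}^2$; and for any $\alpha\leq\beta$, $\langle\mathcal{A}^{2\beta}\mathbf{v},\mathbf{v}\rangle \leq \|\mathcal{A}\|^{2(\beta-\alpha)}\langle\mathcal{A}^{2\alpha}\mathbf{v},\mathbf{v}\rangle$. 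Inserting the first into the LHS of FAP$(\beta,\eta)$ and the second into its RHS, the powers of $\|\mathcal{A}\|$ combine into the target denominator $\|\mathcal{A}\|^{2\alpha-\kappa}$ with the same constant $K_{P,\beta,\eta}$, giving FAP$(\alpha,\kappa)$.

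For Claim~2 with $0\leq\kappa\leq\eta\leq\beta$, the LHS inequality used in Claim~1 runs the wrong way because $\|\cdot\|_{\mathcal{A}^\kappa}$ weights small eigenvalues of $\mathcal{A}$ more heavily than $\|\cdot\|_{\mathcal{A}^\eta}$. My plan is to apply FAP$(\beta,\eta)$ twice. First, take $\mathbf{v}_c^{(1)}$ to be the $\mathcal{A}^\eta$-orthogonal projection of $\mathbf{v}$ onto $\mathbf{R}(P)$; by optimality in the $\mathcal{A}^\eta$-norm it inherits the FAP$(\beta,\eta)$ bound, and crucially the residual $\mathbf{w}:=\mathbf{v}-P\mathbf{v}_c^{(1)}$ satisfies $P^*\mathcal{A}^\eta\mathbf{w}=0$. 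Second, apply FAP$(\beta,\eta)$ to a vector derived from $\mathbf{w}$ --- the natural candidate being $\mathcal{A}^{-(\eta-\kappa)/2}\mathbf{w}$, whose $\mathcal{A}^\eta$-norm equals $\|\mathbf{w}\|_{\mathcal{A}^\kappa}$ --- to obtain an additional correction $\mathbf{v}_c^{(2)}$, and take $\mathbf{v}_c^{(1)}+\mathbf{v}_c^{(2)}$ as the candidate for FAP$(\beta,\kappa)$.

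The main obstacle will be reconciling the second FAP application with the range of $P$: the coarse vector returned by FAP interpolates into $\mathbf{R}(P)$, but the rescaling needed to turn its image into an approximation of $\mathbf{w}$ rather than of $\mathcal{A}^{-(\eta-\kappa)/2}\mathbf{w}$ generally leaves $\mathbf{R}(P)$. I expect to bridge this gap via an Aubin--Nitsche-style duality that exploits $P^*\mathcal{A}^\eta\mathbf{w}=0$ to discard the out-of-range component, so that the two FAP applications compose to contribute exactly one extra factor of $K_{P,\beta,\eta}$ together with the correct exponent shift from $\|\mathcal{A}\|^{2\beta-\eta}$ to $\|\mathcal{A}\|^{2\beta-\kappa}$, producing the $K_{P,\beta,\eta}^2/\|\mathcal{A}\|^{2\beta-\kappa}$ bound required.
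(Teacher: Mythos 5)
Your Part~1 argument is exactly the paper's: apply the two spectral inequalities $\|\cdot\|_{\mathcal{A}^\kappa}^2\le\|\mathcal{A}\|^{\kappa-\eta}\|\cdot\|_{\mathcal{A}^\eta}^2$ and $\langle\mathcal{A}^{2\beta}\mathbf{v},\mathbf{v}\rangle\le\|\mathcal{A}\|^{2(\beta-\alpha)}\langle\mathcal{A}^{2\alpha}\mathbf{v},\mathbf{v}\rangle$ to the same $\mathbf{v}_c$.

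For Part~2, your core idea --- take the $\mathcal{A}^\eta$-orthogonal projection, exploit $P^*\mathcal{A}^\eta\mathbf{w}=\mathbf{0}$, and close via an Aubin--Nitsche duality that invokes the FAP a second time --- is exactly how the paper's appendix proof works, so the strategy is sound. However two specific slips need fixing. First, the dual vector must be $\mathcal{A}^{\kappa-\eta}\mathbf{w}$, not $\mathcal{A}^{-(\eta-\kappa)/2}\mathbf{w}$: the identity you need is $\langle\mathcal{A}^\kappa\mathbf{w},\mathbf{w}\rangle=\langle\mathcal{A}^\eta\mathbf{w},\mathcal{A}^{\kappa-\eta}\mathbf{w}\rangle$, and your ``natural candidate'' has the right $\mathcal{A}^\eta$-norm but simply does not appear in the pairing, so it never enters the argument. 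Second, the interpolant that certifies FAP$(\beta,\kappa)$ remains $\mathbf{v}_c^{(1)}$; the coarse vector from the second FAP is auxiliary and is only \emph{subtracted inside the inner product} (legitimate by orthogonality), never added to $\mathbf{v}_c^{(1)}$ --- the projection property does not survive that addition. The paper simplifies the bookkeeping by first passing through FAP$(\beta,\beta)$ via Part~1 and targeting $\kappa=0$, so that with $\mathbf{z}=\mathcal{A}^{-\beta}\mathbf{w}$ one has $\|\mathcal{A}^\beta\mathbf{z}\|=\|\mathbf{w}\|$ and the inequality becomes directly self-referential; if you insist on a direct route, after using $\mathbf{z}=\mathcal{A}^{\kappa-\eta}\mathbf{w}$ you still need the observation that $2(\beta+\kappa-\eta)\ge\kappa$ (true since $\eta\le\beta$, $\kappa\ge 0$) so the resulting factor $\|\mathcal{A}^{\beta+\kappa-\eta}\mathbf{w}\|$ can be absorbed back into $\|\mathbf{w}\|_{\mathcal{A}^\kappa}$ and divided off. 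With those corrections your plan does deliver the $K_{P,\beta,\eta}^2$ bound.
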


\begin{proof}
The first part is found by noting that, for any $\kappa \geq \eta$ and $0\leq \alpha \leq \beta$,
\begin{equation}
\|\mathbf{v} - P\mathbf{v}_c\|_{\mathcal{A}^{\kappa}}^2 \leq \| \mathcal{A}^{\kappa-\eta}\| \|\mathbf{v} - P\mathbf{v}_c\|_{\mathcal{A}^\eta}^2
~~\mbox{and}~~
\langle \mathcal{A}^{2\beta} \mathbf{v}, \mathbf{v}\rangle \| \leq \| \mathcal{A}^{2(\beta-\alpha)}\| \langle \mathcal{A}^{2\alpha}\mathbf{v}, \mathbf{v}\rangle.
\end{equation}
The proof of the second part is found in the Appendix.
\end{proof}

The following relations between well-known multigrid approximation properties follow immediately from Theorem \ref{th:FAP}.

\begin{corollary}[Equivalence of approximation properties]\text{ \\ }
\label{lem:approxprop} Let $\mathcal{A}$ be SPD.
\begin{enumerate}
\item If $P$ satisfies the SSAP (FAP$(1,0)$) with respect to $\mathcal{A}$ with constant $K_S$, 
then $P$  satisfies the WAP (FAP$(1/2,0)$) with respect to $\mathcal{A}$ with constant $K_W \leq K_S$.
\item If $P$ satisfies the SSAP (FAP$(1,0)$) with respect to $\mathcal{A}$ with constant $K_S$, 
then $P$ satisfies the SAP (FAP$(1,1)$) with respect to $\mathcal{A}$ with constant $K_P \leq K_S$.
\item If $P$ satisfies the SAP (FAP$(1,1)$) with respect to $\mathcal{A}$ with constant $K_P$, 
then $P$ satisfies the SSAP (FAP$(1,0)$) with respect to $\mathcal{A}$ with constant $K_S \leq K_P^2$.
\item If $P$ satisfies the SAP (FAP$(1,1)$) with respect to $\mathcal{A}$ with constant $K_P$, 
then $P$ satisfies the WAP (FAP$(1/2,0)$) with respect to $\mathcal{A}$ with constant $K_W \leq K_P^2$.
\end{enumerate}
\end{corollary}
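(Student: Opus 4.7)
The plan is to obtain each of the four implications as immediate specializations of Theorem \ref{th:FAP}, after identifying the three named approximation properties with specific FAP parameters: SSAP $=$ FAP$(1,0)$, WAP $=$ FAP$(1/2,0)$, and SAP $=$ FAP$(1,1)$. With these identifications in hand, the corollary becomes an exercise in matching $(\beta,\eta)$-pairs to the hypotheses of Theorem \ref{th:FAP}.

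For Part 1, I would start from FAP$(1,0)$ and apply part (1) of Theorem \ref{th:FAP} with $\alpha = 1/2 \leq \beta = 1$ and $\kappa = 0 = \eta$; the theorem then yields FAP$(1/2,0)$ with $K_W \leq K_S$. For Part 2, I would again invoke part (1) of Theorem \ref{th:FAP} on FAP$(1,0)$, this time with $\alpha = 1 = \beta$ and $\kappa = 1 \geq \eta = 0$, producing FAP$(1,1)$ with $K_P \leq K_S$. Both of these only raise the error-norm index or lower the target index, which is precisely what part (1) of the theorem permits while preserving (or improving) the constant.

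Part 3 is the only step that genuinely uses part (2) of Theorem \ref{th:FAP}: from FAP$(1,1)$, the hypothesis $\eta \leq \beta$ holds with equality, so lowering $\kappa$ from $\eta = 1$ down to $\kappa = 0$ gives FAP$(1,0)$ with constant squared, i.e., $K_S \leq K_P^2$. Part 4 follows at no extra cost by composing Parts 3 and 1: SAP implies SSAP with constant bound $K_P^2$, which in turn implies WAP without increasing the constant, giving $K_W \leq K_P^2$.

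There is essentially no obstacle internal to the corollary itself — all four items reduce to bookkeeping on $(\beta, \eta, \alpha, \kappa)$. The only nontrivial ingredient is the constant-squaring step in part (2) of Theorem \ref{th:FAP}, which is the reason Parts 3 and 4 incur $K_P^2$ rather than $K_P$; that is deferred to the Appendix in the statement of Theorem \ref{th:FAP} and is not part of the corollary's proof.
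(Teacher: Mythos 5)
Your proposal is correct and follows the paper's intended route exactly: the paper states the corollary "follows immediately from Theorem~\ref{th:FAP}," and your bookkeeping of $(\beta,\eta,\alpha,\kappa)$-pairs, including using part (2) of the theorem for the constant-squaring in Part 3 and composing Parts 3 and 1 for Part 4, is precisely the specialization the paper has in mind.
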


}

{\color{black}
In the discrete setting, for any SPD matrix, $\mathcal{A}$, any full rank transfer operator, $P$, will satisfy
a FAP$(\beta,\eta)$ for some constant $K_{P,\beta,\eta}$. This is only useful if $K_{P,\beta,\eta}$
is relatively small. Moreover, the approximation property must hold with constant independent of 
the problem size, $n$. One can think of $\mathcal{A}$ as a discrete form of a PDE and $P$ as a strategy
for approximating the eigenvectors associated with the smallest eigenvalues values
of $\mathcal{A}$. The goal is for the FAP to hold with a constant that is independent of the 
discretization accuracy of $\mathcal{A}$, which is usually correlated with the problem size, $n$.

In this paper, approximation properties for $P$ will be with respect to $QA$ and approximation properties for $R$ 
will be with respect to $AQ$.
In the multi-level setting, a sequence of transfer operators, say $P_\ell, R_\ell$, are formed and yield a sequence 
of coarse grid operators $A_{\ell+1} = R_{\ell}^*A_\ell P_\ell$.  In the development below, $P_\ell$ is assumed
to have approximation properties with respect to $Q_\ell A_\ell$ and $R_{\ell}$ with respect to $A_\ell Q_\ell$, both with 
constants independent of the grid level, $\ell$, and problem size, $n$. Independent of grid level is somewhat different than 
independent of problem size because the coarse-grid operators no longer need be closely related to the
original PDE. 
}

For SPD systems, satisfying the WAP (FAP$(1/2,0)$) is a necessary and sufficient condition for two-grid convergence \cite{Falgout:2004cs}, and
satisfying the SAP(FAP$(1,1)$) on all levels are sufficient conditions for multilevel convergence \cite{Ruge:1987vg,Vassilevski:2008wd}. 
Nonsymmetric matrices lead to a non-orthogonal coarse-grid correction, which requires stronger conditions for convergence. In particular, it is important
that coarse-grid correction be \textit{stable}, that is, coarse-grid correction can only increase error by some small constant $C_\Pi \geq 1$,
independent of the problem size: 
\begin{definition}[Stability of $\Pi$ in $\mathcal{A}$-norm]
\label{assp:stability}
\begin{align}
 \|\Pi\|_{\mathcal{A}}^2 \leq C_\Pi,
\end{align}
where $C_\Pi \geq 1$ is an $O(1)$ constant, independent of the problem size.
\end{definition}

A natural idea for NS-AMG is to introduce approximation properties on both $R$ and $P$. However, a simple example shows
that building $R$ and $P$ to both satisfy a SAP does not imply stability:
\begin{example}\label{ex:counter}
Let $n_c$ be the size of the coarse-grid problem and $\ell < n_c$ some number such that $\sigma_{\ell} \sim O(1)$. For right
singular vectors $\{\mathbf{v}_i\}$ and left singular vectors $\{\mathbf{u}_i\}$, define 
\begin{align*}
P &:= \left[ \mathbf{v}_1,...,\mathbf{v}_{\ell-1},\mathbf{v}_{\ell+1}, ..., \mathbf{v}_{n_c+1}\right] ,\hspace{4ex}
R := \left[ \mathbf{u}_1,...,\mathbf{u}_{n_c}\right].
\end{align*}
Although $\mathbf{v}_{\ell} \not\in\mathbf{R}(P)$, because $\sigma_{\ell} \sim O(\|A\|)$, $P$ trivially satisfies the SAP for
$\mathbf{v}_{\ell}$ by interpolating the zero vector. Then, it is clear that $P$ satisfies a SAP with respect to $QA$ and
$R$ satisfies a SAP with respect to $AQ$, independent of problem size. However, for the $n_c$th canonical basis vector,
$\mathbf{e}_{n_c}$, $R^*AP\mathbf{e}_{n_c}  = \mathbf{0}$. That is, $R^*AP$ is singular, which implies $\|\Pi\|$ is not well-defined.
\end{example}

Thus, more than two approximation properties are needed for convergence of NS-AMG. In \cite{Brezina:2010dm}, Theorem
\ref{th:2grid} is proven, showing that stability of $\|\Pi\|_{QA}$ and the SAP on $P$ with respect to the $QA$-norm, along with additional relaxation
to account for potential increases in error from coarse-grid correction, are sufficient conditions for two-grid convergence in the
$QA$-norm. In \cite{Brezina:2010dm}, the number of relaxation iterations required to prove convergence scales like the square of
the SAP constant. \tcbb{Here, we show that the number of relaxation iterations can depend on the strength of the approximation property
of $P$. For completeness, the result from \cite{Brezina:2010dm} is repeated.
}

\begin{theorem}[Two-grid $QA$-Convergence (Theorem 2.3, \cite{Brezina:2010dm})] \label{th:2grid}
Let $G $ be the error-propagation operator for $\nu$ iterations of Richardson-relaxation on the normal equations ($A^*A$),
$G := \big ( I - \frac{A^*A}{\|A\|^2}\big )^\nu$, and $(I - \Pi)$
the (non-orthogonal) coarse-grid correction defined by restriction and interpolation operators, $R$ and $P$, respectively (see \eqref{eq:cgc}).
If $P$ satisfies a SAP with respect to the $QA$-norm with constant $K_P$ and coarse-grid correction is stable with
constant $C_\Pi$, then
\begin{align*}
\| (I-\Pi)G\mathbf{e}\|_{QA} \leq \frac{16C_\Pi K_P}{25\sqrt{4\nu +1}} \|\mathbf{e}\|_{QA}.
\end{align*}
Two-grid convergence of NS-AMG in the $QA$-norm follows by performing sufficient iterations of relaxation,
$\nu$, such that $\| (I-\Pi)G\mathbf{e}\|_{QA}<\|\mathbf{e}\|_{QA}$. 
\end{theorem}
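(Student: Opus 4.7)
The plan is to split the argument into two independent estimates: (i) a stability/SAP step that replaces the post-smoothing error by a residual-type quantity, and (ii) a smoothing inequality that controls that residual quantity by the initial $QA$-norm. Writing $\mathbf{e}_\nu := G\mathbf{e}$, I would first exploit that $(I-\Pi)P = 0$, so that for any coarse vector $\mathbf{v}_c$,
\[
\|(I-\Pi)\mathbf{e}_\nu\|_{QA} = \|(I-\Pi)(\mathbf{e}_\nu - P\mathbf{v}_c)\|_{QA} \leq \|I-\Pi\|_{QA}\,\|\mathbf{e}_\nu - P\mathbf{v}_c\|_{QA}.
\]
By Kato's identity for nontrivial projections, $\|I-\Pi\|_{QA} = \|\Pi\|_{QA} \leq \sqrt{C_\Pi}$. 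Since $(QA)^2 = V\Sigma^2 V^* = A^*A$ and $\|QA\| = \|A\|$, the SAP (i.e.\ FAP$(1,1)$) with respect to $QA$ supplies a $\mathbf{v}_c$ with $\|\mathbf{e}_\nu - P\mathbf{v}_c\|_{QA}^2 \leq (K_P/\|A\|)\,\|A\mathbf{e}_\nu\|^2$. Combining yields
\[
\|(I-\Pi)G\mathbf{e}\|_{QA}^2 \;\leq\; \frac{C_\Pi K_P}{\|A\|}\,\|AG\mathbf{e}\|^2.
\]

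Next I would establish the smoothing inequality
\[
\|AG\mathbf{e}\|^2 \;\leq\; \frac{16\,\|A\|}{25\sqrt{4\nu+1}}\,\|\mathbf{e}\|_{QA}^2
\]
by a spectral computation. Since $G$ is a polynomial in $A^*A = (QA)^2$, it commutes with $QA$; diagonalizing $QA = V\Sigma V^*$ with $\sigma_i \in (0,\|A\|]$ and setting $c_i := |\langle \mathbf{e},\mathbf{v}_i\rangle|^2$,
\[
\|AG\mathbf{e}\|^2 = \sum_i \sigma_i^{2}\bigl(1-\sigma_i^{2}/\|A\|^2\bigr)^{2\nu} c_i, \qquad \|\mathbf{e}\|_{QA}^2 = \sum_i \sigma_i c_i,
\]
so it suffices to bound $\sigma_i(1-t_i)^{2\nu} = \|A\|\sqrt{t_i}(1-t_i)^{2\nu}$ uniformly, with $t_i := \sigma_i^{2}/\|A\|^2 \in (0,1]$. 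A short calculus exercise gives maximizer $t^* = 1/(4\nu+1)$, whence $\max_{t\in[0,1]}\sqrt{t}(1-t)^{2\nu} = (4\nu/(4\nu+1))^{2\nu}/\sqrt{4\nu+1}$; because this prefactor is monotone decreasing in $\nu$ and equals $16/25$ at $\nu=1$, the estimate follows for all $\nu \geq 1$. Chaining the two inequalities delivers the advertised bound.

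The main obstacle is not technical but conceptual: one must recognize that the SAP should be applied to the already-smoothed iterate $G\mathbf{e}$, so that the factor $1/\|A\|$ coming from the SAP denominator is precisely compensated by the $\|A\|$ produced by the Bernstein-type analysis of Richardson on $A^*A$. Once this observation is made, the proof reduces to Kato's projection identity together with the one-parameter optimization of $\sqrt{t}(1-t)^{2\nu}$; the explicit constant $16/25$ is merely the value of $(4\nu/(4\nu+1))^{2\nu}$ at $\nu=1$.
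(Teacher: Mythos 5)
Your decomposition is the natural one, and each individual step checks out: $(I-\Pi)P=0$ lets you insert the optimal coarse vector, Kato's identity gives $\|I-\Pi\|_{QA}=\|\Pi\|_{QA}\leq\sqrt{C_\Pi}$ (recall Definition~\ref{assp:stability} bounds $\|\Pi\|_{QA}^2\leq C_\Pi$), the SAP (FAP$(1,1)$ with $\mathcal{A}=QA$, so $2\beta-\eta=1$ and $\langle(QA)^2\mathbf{v},\mathbf{v}\rangle=\|A\mathbf{v}\|^2$) applied to the post-smoothed iterate gives $\|G\mathbf{e}-P\mathbf{v}_c\|_{QA}^2\leq\frac{K_P}{\|A\|}\|AG\mathbf{e}\|^2$, and your spectral optimization of $\sqrt{t}\,(1-t)^{2\nu}$ is computed correctly, with maximizer $t^*=1/(4\nu+1)$ and maximum value $(4\nu/(4\nu+1))^{2\nu}/\sqrt{4\nu+1}\leq(16/25)/\sqrt{4\nu+1}$ for $\nu\geq 1$.

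However, your closing sentence overstates what the chain delivers. What you actually obtain is
\begin{equation*}
\|(I-\Pi)G\mathbf{e}\|_{QA}^2 \;\leq\; \frac{16\,C_\Pi K_P}{25\sqrt{4\nu+1}}\,\|\mathbf{e}\|_{QA}^2,
\end{equation*}
i.e.\ a bound on the \emph{squared} $QA$-norm, equivalently $\|(I-\Pi)G\mathbf{e}\|_{QA}\leq\frac{4\sqrt{C_\Pi K_P}}{5\,(4\nu+1)^{1/4}}\|\mathbf{e}\|_{QA}$. The theorem as displayed bounds the \emph{unsquared} norm by the same constant $\frac{16 C_\Pi K_P}{25\sqrt{4\nu+1}}$, which is the square of what your argument produces. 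The two inequalities are genuinely different: when $\frac{16C_\Pi K_P}{25\sqrt{4\nu+1}}<1$ (the regime of interest once $\nu$ is large enough) the displayed inequality is strictly stronger than yours, and nothing in your derivation closes that gap. Note also that the same pattern appears in Corollary~\ref{cor:2grid}: running your argument with a general FAP$(\beta,1)$ reproduces the coefficient $\big(\frac{2\beta-1}{4\nu+2\beta-1}\big)^{(2\beta-1)/2}C_\Pi K_{P,\beta,1}$ as a bound on the squared norm, not the unsquared norm. So either the theorem (and the corollary) as transcribed here is missing the squares, or you must explicitly identify the additional step that upgrades your squared-norm bound to the stated unsquared-norm bound; you should not assert without comment that chaining ``delivers the advertised bound.'' Everything prior to that sentence is sound, and the overall strategy (apply stability, then the SAP on $G\mathbf{e}$, then a one-parameter Chebyshev-type optimization for Richardson on $A^*A$) is almost certainly the intended proof route.
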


{\color{black}

Theorem \ref{th:2grid} assumes that $P$ satisfies a SAP and requires a number of relaxations that grows with the square of 
of the constants $C_{\Pi}K_P$. The next corollary examines the number of relaxations that are sufficient for convergence
if a FAP with a different power is assumed.

\begin{corollary}\label{cor:2grid}
Assume the hypothesis of Theorem \ref{th:2grid}, with the exception that $P$ satisfies a FAP($\beta,1$), $\beta > 1/2$,
with respect to the $QA$-norm with 
constant $K_{P,\beta,1}$. Then,
\begin{align*}
\| (I-\Pi)G\mathbf{e}\|_{QA} \leq \left(\frac{4}{4+(2\beta-1)}\right)^{2} \left(\frac{(2\beta-1)}{4\nu+(2\beta-1)}\right)^{(2\beta-1)/2} C_{\Pi}K_{P,\beta,1} \|\mathbf{e}\|_{QA}.
\end{align*}
Two-grid convergence of NS-AMG in the $QA$-norm follows by performing sufficient iterations of relaxation,
$\nu$, such that $\| (I-\Pi)G\mathbf{e}\|_{QA}<\|\mathbf{e}\|_{QA}$. 
\end{corollary}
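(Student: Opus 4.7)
The plan is to imitate the proof of Theorem~\ref{th:2grid} step for step, with the FAP$(\beta,1)$ estimate substituted for the SAP. Since $(I-\Pi)P = 0$, for any coarse-grid vector $\mathbf{v}_c$ one has $(I-\Pi)G\mathbf{e} = (I-\Pi)(G\mathbf{e} - P\mathbf{v}_c)$, so stability combined with $\|I-\Pi\|_{QA} = \|\Pi\|_{QA}$ gives $\|(I-\Pi)G\mathbf{e}\|_{QA}^{2} \leq C_\Pi\,\|G\mathbf{e} - P\mathbf{v}_c\|_{QA}^{2}$. Choosing $\mathbf{v}_c$ as the coarse-grid vector guaranteed by FAP$(\beta,1)$ applied to $G\mathbf{e}$ with respect to $QA$ then yields the fundamental estimate
\[
\|(I-\Pi)G\mathbf{e}\|_{QA}^{2} \leq \frac{C_\Pi K_{P,\beta,1}}{\|QA\|^{2\beta-1}}\,\langle (QA)^{2\beta} G\mathbf{e},\, G\mathbf{e}\rangle.
\]

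The next step is to diagonalize. Writing the SVD as $A = U\Sigma V^{*}$, both $QA = V\Sigma V^{*}$ and $G = V(I - \Sigma^{2}/\|A\|^{2})^{\nu}V^{*}$ act diagonally in the right-singular basis; expanding $\mathbf{e} = \sum_{i} c_{i}\mathbf{v}_{i}$ and setting $t_{i} := \sigma_{i}^{2}/\|A\|^{2} \in (0,1]$ reduces the right-hand side to
\[
\|A\|\sum_{i} t_{i}^{\beta}(1 - t_{i})^{2\nu} c_{i}^{2} = \sum_{i}\bigl[t_{i}^{\beta - 1/2}(1-t_{i})^{2\nu}\bigr]\sigma_{i} c_{i}^{2} \leq \Bigl(\max_{t\in[0,1]} t^{\beta - 1/2}(1-t)^{2\nu}\Bigr)\|\mathbf{e}\|_{QA}^{2},
\]
where the last step uses $\|\mathbf{e}\|_{QA}^{2} = \sum_{i}\sigma_{i}c_{i}^{2}$ and nonnegativity of each $\sigma_{i}c_{i}^{2}$. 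The entire problem has thus been reduced to a scalar polynomial optimization.

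Because $\beta > 1/2$, the exponent on $t$ is positive, and standard calculus locates the maximizer at $t^{*} = (2\beta-1)/(4\nu+(2\beta-1))$, where
\[
\max_{t\in[0,1]} t^{\beta - 1/2}(1-t)^{2\nu} = \Bigl(\tfrac{2\beta-1}{4\nu + (2\beta-1)}\Bigr)^{(2\beta-1)/2} \Bigl(\tfrac{4\nu}{4\nu+(2\beta-1)}\Bigr)^{2\nu}.
\]
The first factor is already the $\nu$-dependent term appearing in the corollary. The main obstacle is to convert the messy second factor into the clean, $\nu$-independent constant $(4/(4+(2\beta-1)))^{2}$. For this I would invoke the classical monotonicity of $(1+x/n)^{n}$ in $n$ for fixed $x \geq 0$: applied with $x = 2\beta-1$ and $n = 2\nu$ it gives, for every $\nu \geq 1$, $(1+(2\beta-1)/(2\nu))^{2\nu} \geq (1+(2\beta-1)/2)^{2}$, whose reciprocal is the required uniform bound. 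Chaining all the inequalities together yields the stated estimate and recovers Theorem~\ref{th:2grid} in the specialization $\beta=1$.
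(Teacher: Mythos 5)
Your argument is the natural one and is in the spirit of the paper's one-line proof (which just cites Brezina et al.\ ``with modifications for the power $\beta$''), but it contains a square/square-root mismatch that you do not resolve. What your chain of inequalities actually establishes is
\begin{equation*}
\|(I-\Pi)G\mathbf{e}\|_{QA}^{2} \;\leq\; C_\Pi K_{P,\beta,1}\,\Bigl(\max_{t\in[0,1]} t^{\beta-1/2}(1-t)^{2\nu}\Bigr)\,\|\mathbf{e}\|_{QA}^{2},
\end{equation*}
and with $\max_t t^{\beta-1/2}(1-t)^{2\nu} = \bigl(\tfrac{2\beta-1}{4\nu+2\beta-1}\bigr)^{(2\beta-1)/2}\bigl(\tfrac{4\nu}{4\nu+2\beta-1}\bigr)^{2\nu}\leq \bigl(\tfrac{2\beta-1}{4\nu+2\beta-1}\bigr)^{(2\beta-1)/2}\bigl(\tfrac{4}{4+2\beta-1}\bigr)^{2}$, the right-hand side is \emph{exactly} the corollary's displayed coefficient. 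In other words you have proved the corollary's inequality with both norms squared. You then treat $\max_t t^{\beta-1/2}(1-t)^{2\nu}$ as if it already bounds the \emph{unsquared} ratio $\|(I-\Pi)G\mathbf{e}\|_{QA}/\|\mathbf{e}\|_{QA}$; taking the square root of your inequality would instead yield $\sqrt{C_\Pi K_{P,\beta,1}}$ and the exponent $(2\beta-1)/4$, which does not match the statement. You should explicitly flag that your bound is for the squared norm ratio; given that $C_\Pi$ and $K_{P,\beta,1}$ are defined in the paper with squares already built in (Definitions~2.1 and~2.4), and that your derivation produces them linearly only at the squared level, this is almost certainly the intended reading of Theorem~\ref{th:2grid} and Corollary~\ref{cor:2grid}, but the gap must be acknowledged rather than elided.

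A second, smaller slip: the factor you must control is $\bigl(\tfrac{4\nu}{4\nu+(2\beta-1)}\bigr)^{2\nu}$, whose reciprocal is $\bigl(1+\tfrac{2\beta-1}{4\nu}\bigr)^{2\nu}$. Reading this as $(1+x/m)^{m}$ requires $m=2\nu$ and $x=(2\beta-1)/2$, not $x=2\beta-1$ as you wrote; monotonicity in $m$ then gives $(1+x/m)^{m}\geq (1+x/2)^{2}=(1+(2\beta-1)/4)^{2}$ for $m\geq 2$, whose reciprocal is the desired $(4/(4+2\beta-1))^{2}$. The inequality you actually wrote, $(1+(2\beta-1)/(2\nu))^{2\nu}\geq(1+(2\beta-1)/2)^{2}$, is true but has reciprocal $(2/(2+2\beta-1))^{2}$, which is the wrong constant. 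The overall strategy — stability plus FAP$(\beta,1)$ applied to $G\mathbf{e}$, diagonalize in the right singular basis, reduce to a scalar optimization over $t\in[0,1]$, and bound the $\nu$-dependent piece by its value at $\nu=1$ — is sound; these two points just need to be fixed.
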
 
 
 \begin{proof}
 The proof follows from the proof of Theorem \ref{th:2grid} in \cite{Brezina:2010dm} with modifications for the power $\beta$.
 \end{proof}

     Theorem \ref{th:2grid} and Corollary \ref{cor:2grid} are sufficient conditions and are likely not sharp. However, they do expose 
   the importance of the power of the approximation property. If $P$ satisfies a SAP (FAP$(1,1)$), 
   then $\beta = 1$ and then the number of relaxations sufficient to guarantee convergence 
   grows like the square of the constants 
   $C_\Pi K_P$. If $P$ is more accurate and satisfies a FAP$(3/2,1)$, that is, with $\beta = 3/2$,  the number of relaxations
   grows linearly with $C_\Pi K_P$. If $P$ only satisfies a FAP slightly better than a WAP, that is, $\beta > 1/2$, then the number of
   relaxations grows like $(C_\Pi K_P)^{\frac{2}{(2\beta-1)}}$ and can be very large.

  }

Defining a stable coarse-grid correction, with $C_\Pi\sim \mathcal{O}(1)$, is a crux of NS-AMG.  Approximation properties
alone are not sufficient for stability, and stability by definition does not give useful information for building $R$ and $P$, motivating further study
on conditions for stability and two-grid convergence. In particular, we seek conditions on $R$ and
$P$ that give insight to their respective roles in NS-AMG convergence.

The paper proceeds as follows. A basis under which to consider convergence is developed
in Section \ref{sec:2grid:basis}, followed by a proof of sufficient conditions for stability and two-grid convergence in Section \ref{sec:2grid:stability}
(Theorem \ref{th:stability}). Section \ref{sec:multigrid} examines the multilevel case, \tcbb{establishing sufficient conditions for the equivalence 
between two inner products in Section \ref{sec:multigrid:equiv}, and sufficient conditions for $W$-cycle convergence in Section \ref{sec:multigrid:conv}. }
  
  \tcbb{
  In the remainder of this paper, $\beta,\eta$-subscripts in approximation property constants, $K_{P,\beta,\eta}$, are omitted when the 
  meaning is clear. Proofs will make regular use of the following results on equivalent operators, and bounding the action of a
$2\times 2$ block matrix above and below, for which a proof can be found in the Appendix.}

\begin{definition}[Equivalent operators]
Two SPD operators, $A$ and $B$, are said to be \textit{spectrally equivalent} and two general operators, $A$ and $B$, 
\textit{norm equivalent} if there exist constants, $\alpha_s,\beta_s$ and $\alpha_n,\beta_n$, respectively, such that
\begin{align}\label{eqn:equiv}
\alpha_s \leq \frac{\langle A\mathbf{x},\mathbf{x}\rangle}{\langle B\mathbf{x},\mathbf{x}\rangle} \leq \beta_s, \hspace{4ex}
	\alpha_n \leq \frac{\langle A\mathbf{x},A\mathbf{x}\rangle}{\langle B\mathbf{x},B\mathbf{x}\rangle} \leq \beta_n,
\end{align}
denoted $A\sim_s B$ and $A\sim_n B$. For self-adjoint, compact operators on a separable Hilbert space,
$A\sim_n B\implies A\sim_s B$, with the same constants \cite{Faber:1990ed}. 
\end{definition}
\tcbb{Here, we are interested in self-adjoint operators on finite dimensional spaces, but assume the operators 
$A$ and $B$ are discretizations on a sequence of meshes and that
the equivalence constants are independent of the mesh.}
More results on the equivalence of operators in a Hilbert space can be found in
\cite{Faber:1990ed}.

\begin{lemma}\label{lem:bound_block}
Consider the block matrix $\begin{pmatrix}~~A & -B \\ -C & ~~D\end{pmatrix}$. Suppose 
\begin{align*}
0 < a_0\|\mathbf{x}\| & \leq \|A\mathbf{x}\| \leq a_1\|\mathbf{x}\| , \hspace{4.5ex} \|B\mathbf{x}\| \leq b\|\mathbf{x}\|,\\
0 < d_0\|\mathbf{x}\| & \leq  \|D\mathbf{x}\| \leq d_1\|\mathbf{x}\| , \hspace{4ex} \|C\mathbf{x}\| \leq c\|\mathbf{x}\|,
\end{align*}
for all $\mathbf{x}$. Further, assume $a_0d_0 > bc$. Then,
\begin{align*}
0 < \eta_0 \leq \frac{\left\|\begin{pmatrix}~~A & -B \\ -C & ~~D\end{pmatrix}\begin{pmatrix}\mathbf{x}\\\mathbf{y}\end{pmatrix}\right\|^2 }{\|\mathbf{x}\|^2 + \|\mathbf{y}\|^2} \leq \eta_1,
\end{align*}
where
\begin{align*}
\eta_0 & = \frac{a_0^2 + b^2 + c^2 + d_0^2 - \sqrt{(a_0^2+b^2-c^2-d_0^2)^2 + 4(a_0c+bd_0)^2}}{2},\\
\eta_1 & = \frac{a_1^2 + b^2 + c^2 + d_1^2 + \sqrt{(a_1^2+b^2-c^2-d_1^2)^2 + 4(a_1c+bd_1)^2}}{2}.
\end{align*}
\end{lemma}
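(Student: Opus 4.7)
The plan is to reduce the operator-norm bounds for $M := \begin{pmatrix} A & -B \\ -C & D\end{pmatrix}$ acting on $\begin{pmatrix}\mathbf{x}\\\mathbf{y}\end{pmatrix}$ to the extreme singular values of a scalar $2\times 2$ matrix built from the hypothesized norm data. Writing $\alpha = \|\mathbf{x}\|$ and $\beta = \|\mathbf{y}\|$, the identity
$\|M(\mathbf{x},\mathbf{y})^T\|^2 = \|A\mathbf{x} - B\mathbf{y}\|^2 + \|D\mathbf{y} - C\mathbf{x}\|^2$
isolates the two rows; the idea is to bound each squared norm above and below by scalar quantities in $\alpha,\beta$ using the given hypotheses, and then to optimize the resulting quadratic forms on the unit circle of $\mathbb{R}^2$.

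For the upper bound, the triangle inequality applied to each row and the stated norm hypotheses give
$\|M(\mathbf{x},\mathbf{y})^T\|^2 \leq (a_1\alpha + b\beta)^2 + (c\alpha + d_1\beta)^2$,
a quadratic form in $(\alpha,\beta)$ whose Gram matrix is $\begin{pmatrix} a_1^2+c^2 & a_1b+cd_1 \\ a_1b+cd_1 & b^2+d_1^2\end{pmatrix}$. Its larger eigenvalue bounds $\|M(\mathbf{x},\mathbf{y})^T\|^2/(\alpha^2+\beta^2)$ above, and the standard $2\times 2$ eigenvalue formula, together with the algebraic identity $((a_1^2+c^2)-(b^2+d_1^2))^2 + 4(a_1b+cd_1)^2 = (a_1^2+b^2-c^2-d_1^2)^2 + 4(a_1c+bd_1)^2$ (verified by expanding both sides), reproduces the stated expression for $\eta_1$.

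For the lower bound I would apply the elementary inequality $-2\,\mathrm{Re}\langle \mathbf{u},\mathbf{w}\rangle \geq -2\|\mathbf{u}\|\|\mathbf{w}\|$ to each cross term to obtain $\|M(\mathbf{x},\mathbf{y})^T\|^2 \geq (\|A\mathbf{x}\| - \|B\mathbf{y}\|)^2 + (\|D\mathbf{y}\| - \|C\mathbf{x}\|)^2$. Writing $p,q,r,s$ for the four norms, these lie in the rectangle $p\in[a_0\alpha,a_1\alpha]$, $q\in[0,b\beta]$, $r\in[0,c\alpha]$, $s\in[d_0\beta,d_1\beta]$, and the hypothesis $a_0d_0 > bc$ is equivalent to $b/a_0 < d_0/c$. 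I would split on the ratio $\alpha/\beta$: in the middle range $b/a_0 < \alpha/\beta < d_0/c$ both differences are strictly positive, so the minimum over the rectangle equals $\|\tilde{M}_0(\alpha,\beta)^T\|^2$ with $\tilde{M}_0 := \begin{pmatrix} a_0 & -b \\ -c & d_0\end{pmatrix}$, and its minimum on the unit circle is $\sigma_{\min}(\tilde{M}_0)^2 = \eta_0$; in the outer ranges one of the squared differences can vanish, but a direct monotonicity argument in $t = \alpha/\beta$ shows the surviving term is bounded below by $(a_0d_0-bc)^2(\alpha^2+\beta^2)/(a_0^2+b^2)$ or $(a_0d_0-bc)^2(\alpha^2+\beta^2)/(c^2+d_0^2)$. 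Both boundary bounds dominate $\eta_0(\alpha^2+\beta^2)$ via the identity $\eta_0\eta_1 = (a_0d_0-bc)^2$ together with the elementary estimate $\eta_1 \geq \max(a_0^2+b^2,\,c^2+d_0^2)$, which follows from the explicit formula for $\eta_1$ by bounding its square-root discriminant below by $|a_0^2+b^2-c^2-d_0^2|$.

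The hard part is the lower bound: the reverse-triangle bound $(\|A\mathbf{x}\|-\|B\mathbf{y}\|)^2$ can genuinely vanish over the allowed intervals when they overlap, so one cannot uniformly replace $\|A\mathbf{x}\|, \|B\mathbf{y}\|$ by their extreme values and recover $\|\tilde{M}_0(\alpha,\beta)^T\|^2$ in a single pass. The strict inequality $a_0d_0 > bc$ is precisely what prevents simultaneous overlap in both row pairs, and the trace--determinant identities for $\tilde{M}_0\tilde{M}_0^T$ are what force the surviving squared difference in each boundary case to still dominate $\eta_0$.
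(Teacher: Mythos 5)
Your proof is correct, and it takes a genuinely different route from the paper's. The paper proves both bounds by inserting $\epsilon$-inequalities into the expansion of $\|A\mathbf{x}-B\mathbf{y}\|^2 + \|C\mathbf{x}-D\mathbf{y}\|^2$, and then optimizes the two free parameters $\epsilon_1,\epsilon_2$ subject to a feasibility constraint whose solvability is exactly $a_0d_0>bc$. You instead reduce directly to the extreme singular values of the scalar comparison matrices $\bigl(\begin{smallmatrix}a_1 & b\\ c & d_1\end{smallmatrix}\bigr)$ and $\tilde{M}_0 = \bigl(\begin{smallmatrix}a_0 & -b\\ -c & d_0\end{smallmatrix}\bigr)$. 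For the upper bound this is very clean: the triangle inequality gives an honest quadratic form in $(\|\mathbf{x}\|,\|\mathbf{y}\|)$ and its largest eigenvalue is $\eta_1$ after the discriminant identity $(a_1^2+c^2-b^2-d_1^2)^2+4(a_1b+cd_1)^2=(a_1^2+b^2-c^2-d_1^2)^2+4(a_1c+bd_1)^2$, which does hold (both sides expand to the same symmetric polynomial; the paper's $\epsilon$-optimization produces the second form directly, so it never needs this identity). For the lower bound, the price you pay for avoiding the paper's $\epsilon$-parametrization is the three-case split on $t=\|\mathbf{x}\|/\|\mathbf{y}\|$, because $\max(0,a_0\alpha-b\beta)^2+\max(0,d_0\beta-c\alpha)^2$ is only piecewise quadratic. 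Your case analysis is sound: in the middle range the two reverse-triangle terms are both active and you are literally bounding $\|\tilde{M}_0(\alpha,\beta)^T\|^2$ below by $\sigma_{\min}(\tilde{M}_0)^2(\alpha^2+\beta^2)=\eta_0(\alpha^2+\beta^2)$, and in each outer range the surviving quadratic $\,(a_0\alpha-b\beta)^2$ or $(d_0\beta-c\alpha)^2\,$ dominates $\eta_0(\alpha^2+\beta^2)$ by the $\det$/trace argument you sketch, using $\eta_0\cdot\lambda_{\max}(\tilde{M}_0^*\tilde{M}_0)=\det(\tilde{M}_0)^2=(a_0d_0-bc)^2$ together with $\lambda_{\max}\geq\max(a_0^2+b^2,\,c^2+d_0^2)$ (the diagonal entries of $\tilde{M}_0\tilde{M}_0^*$). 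One small notational caution: you write $\eta_1$ for this auxiliary quantity $\lambda_{\max}(\tilde{M}_0^*\tilde{M}_0)$, built from $a_0,d_0$, which is \emph{not} the $\eta_1$ of the lemma statement (built from $a_1,d_1$); worth renaming to avoid confusion. Both proofs ultimately hinge on $a_0d_0>bc$ in the same place — your overlapping-interval observation is the geometric reading of the paper's feasibility condition on $(\epsilon_1,\epsilon_2)$ — but your route is more conceptual for the upper bound and slightly more delicate for the lower bound, while the paper's is uniform across both.
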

\begin{proof}
The proof is found in the Appendix.
\end{proof}

\subsection{Building a basis}\label{sec:2grid:basis}

\tcb{
In what follows, we denote $P$ and submatrices of $P$ represented in the basis of the right singular vectors with script letters. 
For example, $\mathcal{P} = V^*P$. Likewise, denote $\mathcal{R} = U^*R$. Note that
\begin{equation}\nonumber
P^*(QA)P = \mathcal{P}^*\Sigma\mathcal{P}  \quad \mbox{and}\quad
R^*(AQ)R = \mathcal{R}^*\Sigma\mathcal{R}.
\end{equation}
The transformed space allows for a natural separation of singular vectors with small singular values, which
need to be interpolated accurately, from singular vectors with larger singular values. While singular vectors with 
larger singular values need not be interpolated accurately, it will be shown below that $R$ and $P$ must have a 
similar action on corresponding left and right singular vectors. }

We begin the discussion by demonstrating that coarse-grid correction, $I - \Pi$, is invariant over any change of basis for $P$ and
$R$. If we let $B_P$ and $B_R$ be nonsingular $n_c\times n_c$ square matrices such that $\tilde{P} := PB_P$ and $\tilde{R} := RB_R$,
then, it is easy to show
\begin{align}
\Pi & =  P(R^*AP)^{-1}R^*A =  \tilde{P}(\tilde{R}^*A\tilde{P})^{-1}\tilde{R}^*A. \label{eq:cgc_basis}
\end{align}
Convergence of nonsymmetric AMG will be proved by developing appropriate bases for $R$ and $P$ under which to consider
convergence. \tcb{In particular, a representation of $R$ and $P$ is developed in terms of left and right singular vectors that is
fundamental to understanding convergence.}

\tcb{This section develops an appropriate basis for $P$. First, $P$ is expressed in a block column sense, $P = [W_1,W_2]$,
where $W_1$ and $W_2$ represent an $\ell^2$-orthogonal decomposition of $\mathbf{R}(P)$. In particular, $W_1$ is the 
$\ell^2$-orthogonal projection of the right singular vectors of $A$ with the smallest singular values onto $\mathbf{R}(P)$, 
and $W_2$  is the $\ell^2$-orthogonal complement of $W_1$ in $\mathbf{R}(P)$. A similar decomposition is developed 
for $\mathbf{R}(R)$.
Note that, in later sections, we start with this representation of $P$ to avoid introducing multiple change-of-basis matrices.}

Stability in the $QA$-norm is important to proving two-grid convergence, and can be analyzed in the $\ell^2$-norm in
the singular-vector-transformed space:
\begin{align}
\|\Pi\|_{QA}^2 & = \sup_{\mathbf{x}\neq\mathbf{0}} \frac{\langle QA \Pi\mathbf{x},\Pi \mathbf{x}\rangle}{\langle\mathbf{x},\mathbf{x}\rangle} 
= \sup_{\mathbf{y}\neq\mathbf{0}} \frac{\|\Sigma^{1/2}V^*P(R^*AP)^{-1}R^*U\Sigma\mathbf{y}\|^2}{\|\Sigma^{1/2}\mathbf{y}\|^2} \nonumber\\
& = \|\Sigma^{1/2}\mcP (\mcR^*\Sigma\mcP)^{-1}\mcR^*\Sigma^{1/2}\|^2. \label{eq:CGC_uv}
\end{align}
\
\tcb{Note that \eqref{eq:CGC_uv} holds for any change of bases.}

To prove two-grid and multilevel convergence using change-of-bases $\tilde{P}:= PB_P$ and $\tilde{R}:=RB_R$, several results
are needed:
\begin{enumerate}
\item \textit{Stability:} $\|\Pi\|_{QA} \leq C$,
\item \textit{Bounded change of bases:} $B_P^*B_P\sim_s I$ and $B_R^*B_R\sim_s I$,
\item \textit{Equivalence of inner products:} $(A_c^*A_c)^{1/2} \sim_s P^*QAP$.
\end{enumerate}
Stability is used to prove two-grid convergence, and is considered through a representation of $R$ and $P$ in
terms of singular vectors. Boundedness of the change-of-basis operators ensures that if $P$ is nicely bounded, $P^*P\sim_s I$,
then $\tilde{P}$ is also nicely bounded, $\tilde{P}^*\tilde{P}\sim_s I$. This is a subtle but important result for multilevel
convergence. The equivalence of inner products is also important for multilevel convergence and is discussed in Section \ref{sec:multigrid}.

Let $\Pi_{0_P} = P(P^*P)^{-1}P^*$ be the $\ell^2$-orthogonal projection onto the range of $P$, and define
\begin{align*}
V_1 &= \left[ \mathbf{v}_1, \mathbf{v}_2,\ldots \mathbf{v}_k\right], \qquad &\Sigma_1 &= \diag[\sigma_1,\sigma_2,\ldots,\sigma_k],
	\qquad &W_1 &= \Pi_{0_P}V_1,\\
V_2 &= \left[\mathbf{v}_{k+1}, \ldots, \mathbf{v}_n \right],  \qquad &\Sigma_2 &= \diag[\sigma_{k+1},\ldots,\sigma_n], 
	\qquad &N_1 &= (I - \Pi_{0_P})V_1,
\end{align*}
where \tcb{$k\leq n_c$} will be chosen later such that $\sigma_{k+1}\sim O(1)$. Let $W_2 = [\mathbf{w}_{k+1},\ldots,\mathbf{w}_{n_c}]$
be the $\ell^2$-orthogonal complement of $W_1$ in $\mathbf{R}(P)$, normalized so that \tcb{$W_2^* QA W_2 = I$}. There are many
choices for the basis of $W_2$. Below, a special basis will be constructed.

Assume that $P$ satisfies a \tcbb{FAP$(\beta,0)$} with respect to $QA$ with constant $K_{P}$. Choose
$k$ such that $ \delta_{P} := \sigma_k^{\beta} K_{P}^{1/2} <  1.0$. Note, smaller bounds on $\delta_P$ will be chosen
later for specific results on convergence. From the FAP$(\beta,0)$,
\begin{align}\nonumber
\| N_1 \mathbf{x} \|^2 &= \| (I-\Pi_{0_P})V_1\mathbf{x}\|^2 \leq K_{P} \langle (QA)^{2\beta} V_1\mathbf{x},V_1\mathbf{x}\rangle  \\ \label{eqn:normN}
&= K_{P} \langle \Sigma_1^{\beta}\mathbf{x},\Sigma_1^{\beta} \mathbf{x}\rangle \leq K_{P} \sigma_k^{2\beta} \|\mathbf{x}\|^2 = \delta_{P}^2 \|\mathbf{x}\|^2.
\end{align}
Because $\mathbf{R}(W_2)\subset\mathbf{R}(P)$, $\Pi_{0_P}W_2 = W_2$. By construction, $\mathbf{0} = W_1^*W_2 = V_1^*\Pi_{0_P}
W_2 = V_1^*W_2$, and $N_1^*W_2 = V_1^*(I - \Pi_{0_P})W_2 = \mathbf{0}$. Using this basis for $P$, we can write
\begin{align*}
\mathcal{P}= V^* P = [V_1, V_2]^* [W_1, W_2] = [V_1, V_2]^* [ V_1 - N_1, W_2 ]  
= \left[ \begin{array}{cc} I - \mcN_{11} & \mathbf{0} \\ ~~-\mcN_{21} & \mcW_{2} \end{array}\right],
\end{align*}
where 
\begin{align*}
\mcN_{11} & := V_1^*N_1, \hspace{3ex} \mcN_{21} := V_2^*N_1, \hspace{3ex} \mcW_2  := V_2^*W_2.
\end{align*}
Given $V_1V_1^* + V_2V_2^* = I$, it follows that $\mcW_2^*\mcN_{21} = -W_2^*(I - V_1V_1^*)N_1 = \mathbf{0}$. 

Noting the orthogonal decomposition $ \|V^*N_1\mathbf{x}\|^2 = \|V_1^*N_1\mathbf{x}\|^2 + \|V_2^*N_1\mathbf{x}\|^2
= \|\mcN_{11}\mathbf{x}\|^2 + \|\mcN_{21}\mathbf{x}\|^2$ and using \eqref{eqn:normN},
\begin{align*}
\|\mcN_{11}\mathbf{x}\|^2 + \|\mcN_{21}\mathbf{x}\|^2 = \|N_1\mathbf{x}\|^2 \leq K_{P} \| \Sigma_1^{\beta}\mathbf{x}\|^2,
\end{align*}
and, for some $\theta_{\mathbf{x}}$,
\begin{equation}\label{eq:fap_basis}
\|{\mcN}_{11}\mathbf{x}\|^2 \leq \cos^2(\theta_{\mathbf{x}})K_{P}\|\Sigma_1^{\beta}\mathbf{x}\|^2, \quad
	\|{\mcN}_{21}\mathbf{x}\|^2 \leq \sin^2(\theta_{\mathbf{x}}) K_{P}\|\Sigma_1^{\beta}\mathbf{x}\|^2.
\end{equation}
In the development below, we will replace $\mathbf{x}$ in \eqref{eq:fap_basis} with $\Sigma_1^{-\beta}\mathbf{x}$. 

By assumption of a FAP$(\beta,0)$ and an appropriate choice of $k$, $\|\mcN_{11}\| \leq \|N_1\| \leq \delta_P < 1.$ 
\tcb{Then, $\|(I - \mcN_{11})\mathbf{x}\| > (1-\delta_P)\|\mathbf{x}\|$ for all $\mathbf{x}$, implying $(I-\mcN_{11})$ is nonsingular and invertible.}
Consider a further change of basis to obtain
\begin{equation}\label{eq:basis1}
\tilde{P} = P\left[\begin{array}{cc} (I-\mcN_{11})^{-1} & \mathbf{0} \\ \mathbf{0} & I \end{array}\right] = V  \left[ \begin{array}{cc} I 
	& \mathbf{0} \\ - \mcN_{21}(I - \mcN_{11})^{-1}\Sigma_1^{-\beta}\Sigma_1^{\beta} & \mcW_{2} \end{array}\right].
\end{equation}
Here, we denote $\widehat{\mcN}_2 = \mcN_{21}(I - \mcN_{11})^{-1}\Sigma_1^{-\beta}$, and $\tilde{P}$ takes the form
\begin{equation}\label{eq:basis2}
\tilde{P} = V  \left[ \begin{array}{cc} I & \mathbf{0} \\ - \widehat{\mcN}_{2}\Sigma_1^{\beta} & \mcW_{2} \end{array}\right] .
\end{equation}
It is reasonable to take pause and ask why we added a factor of $\Sigma_1^{-\beta}$ to the block $\mcN_{21}(I - \mcN_{11})^{-1}$ in
\eqref{eq:basis1}. As a result of the FAP$(\beta,0)$, it can be shown that $\mcN_{21}(I - \mcN_{11})^{-1}\Sigma_1^{-q}$ is nicely bounded for powers
of $q\leq \beta$. In particular, we can write $\widehat{\mcN_2} = \mcN_{21}\Sigma_1^{-\beta} (I- \Sigma_1^{\beta}\mcN_{11}\Sigma_1^{-\beta})^{-1}$.
Note that, from \eqref{eq:fap_basis}, $\|\Sigma_1^\beta\mcN_{11} \Sigma_1^{-\beta}\| \leq \|\Sigma_1^{\beta}\|\|\mcN_{11}\Sigma_1^{-\beta}\| \leq 
\sigma_k^\beta K_{P}^{1/2} = \delta_P < 1$, and, thus, $I - \Sigma_1^\beta{\mcN}_{11}\Sigma_1^{-\beta}$ is
invertible. Again using \eqref{eq:fap_basis},
{
\begin{align*}
\|\widehat{\mcN}_{2}\| & = \sup_{\mathbf{x}\neq\mathbf{0}} \frac{\|\mcN_{21}\Sigma_1^{-\beta}(I - \Sigma_1^\beta{\mcN}_{11}\Sigma_1^{-\beta})^{-1}
	\mathbf{x}\|}{\|\mathbf{x}\|} 
=  \sup_{\mathbf{y}\neq\mathbf{0}} \frac{\|\mcN_{21}\Sigma_1^{-\beta}\mathbf{y}\|}
	{\|(I - \Sigma_1^\beta{\mcN}_{11}\Sigma_1^{-\beta})\mathbf{y}\|} \\
& =  \sup_{\mathbf{y}\neq\mathbf{0}} \frac{\|\mcN_{21}\Sigma_1^{-\beta}\mathbf{y}\|}
	{\|\mathbf{y}\| - \|\Sigma_1^\beta{\mcN}_{11}\Sigma_1^{-\beta}\mathbf{y}\|}
	\leq \sup_{\mathbf{y}\neq\mathbf{0}} \frac{\sin(\theta_{\mathbf{y}})K_{P}^{1/2}}{1-\delta_P\cos{\theta_{\mathbf{y}}}},
\end{align*}}
\hspace{-1ex}where, recall, $\delta_P := \sigma_k^{\beta}K_{P}^{1/2}$. \tcb{The maximum over $\mathbf{y}$
occurs when $\cos(\theta_{\mathbf{y}}) = \delta_P$, leading to the bound}
\begin{align*}
\|\widehat{\mcN}_{2}\|^2 \leq \frac{K_{P}}{1-\delta_P^2} := \widehat{K}_{P}.
\end{align*}
The significance of this result is that the block in \eqref{eq:basis2}, $\widehat{\mcN}_2\Sigma_1^{\beta}$, is now bounded when
multiplied by $\Sigma_1^{-\ell}$ for $\ell \leq \beta$. In particular,
\begin{align*}
\|\widehat{\mcN}_{2}\Sigma_1^{\beta-\ell}\| \leq \sigma_k^{\beta-\ell} \widehat{K}_{P}^{1/2} \leq  \widehat{K}_{P}^{1/2} ,
\end{align*}
for $\ell \leq\beta$. \tcb{Note, this is a stronger result in terms of bounding blocks in \eqref{eq:basis1} than can be
obtained through the more natural submultiplicative bound of $\|\mcN_{21}(I - \mcN_{11})^{-1}\|$ based on
$\|\mcN_{21}\|$ and $\|(I - \mcN_{11})^{-1}\|$.} Such a result highlights the significance of the
order of FAP satisfied by $P$, and is important in proving stability and coarse-grid equivalence.

The preceding discussion developed a representation of $P$ in terms the right singular vectors of $A$. An equivalent
approach can be used to develop a representation of $R$ in terms of the left singular vectors
of $A$, and results are summarized in the following lemma.

\begin{lemma}[Bases for $R$ and $P$]\label{lem:PRdecomp}
Assume that $P$ satisfies a FAP$(\beta,0)$ with respect to $QA$, with $\beta > 0$ and  constant $K_{P}$, and that 
$R$ satisfies a FAP$(\gamma,0)$ with respect to $AQ$, with  $\gamma > 0$ and constant $K_{R}$. Further, assume
that $P^*P \sim_s I \sim_s R^*R$. Choose $k\leq n_c$ such that
$\delta_{P} := \sigma_k^{\beta}K_{P}^{1/2}  < 1/\sqrt{2}$ and   $\delta_{R} := \sigma_k^{\gamma} K_{R}^{1/2} < 1/\sqrt{2}$. 
Then, there exist bases, $B_P$ for $P$ and $B_R$ for $R$, such that, \tcb{if $k<n_c$,}
\begin{align*}
\tilde{P} = PB_P & = V\tilde{\mathcal{P}} = \left[ V_1, V_2\right]\left[\begin{array}{cc} I_{k} & \mathbf{0} \\ -\widehat{\mcN}_2\Sigma_1^{\beta} & \mcW_2 \end{array}\right],\\
\tilde{R} = RB_R & = U \tilde{\mathcal{R}} = \left[ U_1, U_2\right]\left[\begin{array}{cc} I_{k} & \mathbf{0} \\ -\widehat{\mcM}_2\Sigma_1^{\gamma} & \mcZ_2 \end{array}\right],
\end{align*}
where
\begin{enumerate}
\item $\mcW_2^*\Sigma_2\mcW_2 = \mcZ_2^*\Sigma_2\mcZ_2 = I$, 
\item $\mcW_2^*\widehat{\mcN}_2 = \mcZ_2^*\widehat{\mcM}_2 = \mathbf{0}$,
\item $\|\widehat{\mcN}_2\| \leq \widehat{K}_{P}^{1/2} :=  \left(\frac{K_{P}}{1-\delta_{P}^2}\right)^{1/2}$,
and $\|\widehat{\mcM}_2\| \leq \widehat{K}_{R}^{1/2} :=  \left(\frac{K_{R}}{1-\delta_{R}^2}\right)^{1/2}$,
\item $B_P^*B_P \sim_s I \sim_s B_R^*B_R$.
\end{enumerate}
Furthermore, \tcb{the bases $B_P$ and $B_R$  can be chosen such that}
\begin{equation*}
\mcZ_2^*\Sigma_2\mcW_2 = S_2 = \diag{[ s_1, s_2,\ldots, s_{n_c-k}]} ,
\end{equation*} 
with $0 \leq  s_1  \leq \ldots \leq s_{n_c-k} \leq 1$. These singular values are the cosines of the angles
between the subspaces $W_2$ and $QZ_2$ in the $QA$ inner product.

\tcb{If $k=n_c$, then $\mcW_2$ and $\mcZ_2$ are empty and conclusions $3$ and $4$ above hold.}
\end{lemma}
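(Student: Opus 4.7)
The plan is to formalize the construction already sketched in the discussion preceding the lemma, and then perform one further orthogonal basis change to put $\mcZ_2^*\Sigma_2\mcW_2$ into the diagonal form $S_2$. First, I take $B_P$ to be the composition of the basis change from the columns of $P$ to $[W_1,W_2]$ (with $W_1 = \Pi_{0_P}V_1$ and $W_2$ the $\ell^2$-orthogonal complement of $W_1$ in $\mathbf{R}(P)$, normalized so that $W_2^*QAW_2 = I$) followed by the block-diagonal change $\mathrm{diag}((I-\mcN_{11})^{-1},I)$, which yields exactly the block form in \eqref{eq:basis2} and the bound $\|\widehat{\mcN}_2\|^2 \leq \widehat{K}_P$ already derived (conclusion 3). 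Conclusion 1 follows from $V_1^*W_2 = 0$, which gives $W_2 = V_2\mcW_2$ with $\mcW_2 := V_2^*W_2$, combined with the normalization. Conclusion 2 is the identity $\mcW_2^*\mcN_{21} = 0$ noted earlier, which passes to $\mcW_2^*\widehat{\mcN}_2 = 0$ by definition of $\widehat{\mcN}_2$. The mirror construction applied to $R$ with respect to $AQ$ and $U$ produces $\tilde{R}$ with $\widehat{\mcM}_2$ and $\mcZ_2$ satisfying the analogous conclusions.

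For conclusion 4 I compute $\tilde{P}^*\tilde{P}$ directly from the block form. Crucially, conclusion 2 forces the off-diagonal blocks of $\tilde{P}^*\tilde{P}$ to vanish, yielding $\tilde{P}^*\tilde{P} = \mathrm{diag}\bigl(I + \Sigma_1^\beta\widehat{\mcN}_2^*\widehat{\mcN}_2\Sigma_1^\beta,\; \mcW_2^*\mcW_2\bigr)$. The top block lies between $I$ and $(1-\delta_P^2)^{-1}I$ via the bound on $\|\widehat{\mcN}_2\|$ combined with $\sigma_k^{2\beta}\widehat{K}_P = \delta_P^2/(1-\delta_P^2)$, and the bottom block is spectrally equivalent to $\mcW_2^*\Sigma_2\mcW_2 = I$ through the extreme values in $\Sigma_2$. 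Thus $\tilde{P}^*\tilde{P} \sim_s I$, and combined with the hypothesis $P^*P \sim_s I$ via the identity $\tilde{P}^*\tilde{P} = B_P^*(P^*P)B_P$ this gives $B_P^*B_P \sim_s I$; the symmetric argument on the $R$-side gives $B_R^*B_R \sim_s I$.

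To diagonalize $\mcZ_2^*\Sigma_2\mcW_2$, conclusion 1 implies that $\Sigma_2^{1/2}\mcW_2$ and $\Sigma_2^{1/2}\mcZ_2$ have $\ell^2$-orthonormal columns, so $\mcZ_2^*\Sigma_2\mcW_2 = (\Sigma_2^{1/2}\mcZ_2)^*(\Sigma_2^{1/2}\mcW_2)$ is a product of matrices with orthonormal columns and thus has singular values in $[0,1]$ (cosines of principal angles). Writing its SVD (composed with a reversal permutation to obtain increasing ordering) as $\mcZ_2^*\Sigma_2\mcW_2 = \tilde{U}_\sigma S_2 \tilde{V}_\sigma^*$ and right-multiplying $B_P$ by $\mathrm{diag}(I,\tilde{V}_\sigma)$ and $B_R$ by $\mathrm{diag}(I,\tilde{U}_\sigma)$ transforms the lower-right blocks to $\mcW_2\tilde{V}_\sigma$ and $\mcZ_2\tilde{U}_\sigma$ while preserving conclusions 1, 2, and 4 (by orthogonality of the added factors) and leaving conclusion 3 untouched. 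The degenerate case $k = n_c$ is handled by omitting the second block row and column throughout, leaving the top $I_k$ block with only conclusions 3 and 4 relevant.

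The main obstacle is careful bookkeeping to confirm that the spectral equivalence constants in $B_P^*B_P \sim_s I$ are all $O(1)$ independent of problem size: this requires the hypothesized constants in $P^*P \sim_s I$, the quantity $\widehat{K}_P$, and the ratio $\|A\|/\sigma_{k+1}$ (which controls the bottom block of $\tilde{P}^*\tilde{P}$) to remain uniformly bounded. The structural identity $\mcW_2^*\widehat{\mcN}_2 = 0$ is the linchpin that produces the block-diagonal form of $\tilde{P}^*\tilde{P}$ and makes this bookkeeping tractable, since without it the cross-terms would couple the FAP bound with the normalization of $\mcW_2$ in a much messier way.
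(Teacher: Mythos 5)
Your construction mirrors the paper's own proof step by step: the same two-stage basis change to reach the block form \eqref{eq:basis2}, the same use of $\mcW_2^*\widehat{\mcN}_2 = 0$ to obtain the block-diagonal $\tilde{P}^*\tilde{P}$, the same spectral bounds on its blocks, and the same SVD-based rotation of $\mcW_2,\mcZ_2$ to diagonalize $\mcZ_2^*\Sigma_2\mcW_2$. The only detail you defer (showing $1/\sigma_{k+1}$ is uniformly bounded, which the paper handles by choosing $\sigma_k$ maximal subject to the hypotheses so that $1/\sigma_{k+1} \leq \max[(2K_P)^{1/2\beta},(2K_R)^{1/2\gamma}]$) is one you correctly flag as the required bookkeeping, so the argument is complete in substance.
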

\begin{proof}
\tcb{When $k<n_c$}, results (1), (2), and (3) follow from the discussion above. It remains to show that $B_P^*B_P \sim_s I \sim_s B_R^*B_R$. 
This is accomplished by observing that, by construction,
\begin{equation*}
\tilde{P}^*\tilde{P} = \left[\begin{array}{cc} I+\Sigma_1^{\beta}\widehat{\mcN}_2^*\widehat{\mcN}_2\Sigma_1^{\beta} & \mathbf{0} \\ 
\mathbf{0} & \mcW_2^*\mcW_2 \end{array}\right].
\end{equation*}
By assumption, $\delta_{P}< 1/\sqrt{2}$, and
\begin{equation*}
\|\Sigma_1^{\beta}\widehat{\mcN}_2^*\widehat{\mcN}_2\Sigma_1^{\beta}\| = \|\widehat{\mcN}_2\Sigma_1^{\beta}\|^2 \leq
\|\widehat{\mcN}_2\|^2\|\Sigma_1^{\beta}\|^2 \leq \widehat{K}_{P} \sigma_k^{2\beta} = \frac{\delta_{P}^2}{1-\delta_{P}^2} < 1.
\end{equation*}
This implies that 
\begin{equation}\label{eq:P*P}
\langle \mathbf{x}, \mathbf{x}\rangle \leq \langle (I+\Sigma_1^{\beta}\widehat{\mcN}_2^*\widehat{\mcN}_2\Sigma_1^{\beta})
	\mathbf{x},\mathbf{x}\rangle \leq 2\langle \mathbf{x}, \mathbf{x} \rangle.
\end{equation}
Also, 
\begin{equation*}
1 = \frac{\langle\Sigma_2 \mcW_2 \mathbf{x}, \mcW_2 \mathbf{x} \rangle}{\langle \mathbf{x},\mathbf{x} \rangle}
 \leq
\frac{\langle \mcW_2 \mathbf{x}, \mcW_2 \mathbf{x} \rangle}{\langle \mathbf{x},\mathbf{x} \rangle}
\leq \frac{1}{\sigma_{k+1} }\frac{\langle\Sigma_2 \mcW_2 \mathbf{x}, \mcW_2 \mathbf{x} \rangle}{\langle \mathbf{x},\mathbf{x} \rangle}
=\frac{1}{\sigma_{k+1}}.
\end{equation*}
Assume that $\sigma_k$ is chosen as large as possible but still satisfies the hypotheses. Then,
\begin{equation*}
\frac{1}{\sigma_{k+1} }\leq \max [(2K_{P})^{1/2\beta},(2K_{R})^{1/2\gamma}] .
\end{equation*} 
Thus, $\| \mathbf{x}\|^2 \leq \langle \tilde{P} \mathbf{x}, \tilde{P} \mathbf{x}\rangle \leq \max\{2, 1/\sigma_{k+1}\} \| \mathbf{x}\|^2$, which implies
$\tilde{P}^*\tilde{P} \sim_s I$. Together with the assumption $P^*P \sim_s I$, this implies $B_P^*B_P \sim_s I$. 
A similar result proves $B_R^*B_R \sim_s I$.
\tcb{When $k = n_c$, $\mcW_2$ is empty and \eqref{eq:P*P} yields $\tilde{P}^*\tilde{P} \sim_s I$. By the argument above $B_P^*B_P\sim_s I$.
A similar argument yields $B_R^*B_R \sim_s I$}

\tcb{To complete the proof, again assume $k<n_c$} and let
\begin{equation*}
\mcZ_2^*\Sigma_2\mcW_2 = \widehat{U}_2S_2\widehat{V}_2^*,
\end{equation*}
be a SVD. \tcb{Recall we are free to choose any bases for $\mcW_2$ and $\mcZ_2$. Consider the change of bases  in which $\mcW_2\mapsfrom\mcW_2\widehat{V}_2$ and $\mcZ_2\mapsfrom\mcZ_2\widehat{U}_2$. 
In these bases,  $\mcZ_2^*\Sigma_2\mcW_2 = S_2$. Moreover, in these bases,
$({\Sigma_2^{1/2}}\mcW_2)$ and $({\Sigma_2^{1/2}}\mcZ_2)$ remain orthonormal, 
yielding the bounds 
$ 0  \leq s_1 \leq s_{n_c-k} \leq 1$.}
$(\beta,0)$
To verify the last statement in the theorem, recall $\mcW_2 = V_2^*W_2$ and $\mcZ_2 = U_2^*Z_2$ and, by construction, $V_1^*W_2 = \mathbf{0}$.
Then,
\begin{align*}
\langle \Sigma_2\mcW_2 \mathbf{x},\mcZ_2\mathbf{y} \rangle &= \langle \Sigma_2V_2^*W_2\mathbf{x},U_2^*Z_2\mathbf{y}\rangle \\
& = \langle U_2\Sigma_2V_2^*W_2\mathbf{x}, Z_2\mathbf{y}\rangle \\
& = \langle (U_1\Sigma_1V_1^* + U_2\Sigma_2V_2^*) W_2\mathbf{x},Z_2\mathbf{y}\rangle \\
& = \langle AW_2\mathbf{x}, Z_2\mathbf{y}\rangle.
\end{align*}
Noting that the singular values are stationary values of the following quotients \cite{VanLoan:1976hl}, it follows that
{\small
\begin{align*}
\frac{\langle \Sigma_2 \mcW_2 \mathbf{x}, \mcZ_2 \mathbf{y} \rangle}{\|\mathbf{x}\|\|\mathbf{y}\| } =
	\frac{\langle AW_2 \mathbf{x}, Z_2 \mathbf{y} \rangle}{\|W_2 \mathbf{x}\|_{QA}\|Z_2 \mathbf{y}\|_{AQ} }  =
	\frac{\langle QAW_2 \mathbf{x}, QZ_2 \mathbf{y} \rangle}{\|W_2 \mathbf{x}\|_{QA}\|QZ_2 \mathbf{y}\|_{QA} } = 
		\frac{\langle W_2 \mathbf{x}, QZ_2 \mathbf{y} \rangle_{QA}}{\|W_2 \mathbf{x}\|_{QA}\|QZ_2 \mathbf{y}\|_{QA} }.
\end{align*} 
}Equivalently, this defines the cosines of angles between $\mathcal{R}(W_2)$ and $\mathcal{R}(QZ_2)$ in the $QA$-inner product. 
\end{proof}

\begin{remark}\label{rem:angle}
Here, $s_1 = \cos \theta_{max}$, 
where $\theta_{max}$ is the maximum angle between subspaces $\mathbf{R}(W_2)$ and
$\mathbf{R}(QZ_2)$ in the $QA$-inner product. If $\mathbf{R}(W_2) = \mathbf{R}(QZ_2)$, then $\theta_{max} = 0$ and $s_1 = 1$.
The less the spaces overlap, that is, the larger the opening angle between the spaces, the smaller $s_1$ will be.
\end{remark}

\subsection{Stability of $\Pi$ and two-grid convergence}\label{sec:2grid:stability}

We are now in position to prove stability of $\Pi$ under appropriate hypotheses. Sufficient conditions include FAPs on $R$
and $P$, as well as an additional hypothesis relating the behavior of $R$ and $P$ on the singular vectors associated with larger
singular values. 

\begin{theorem}[Stability]\label{th:stability}
Assume that $P^*P\sim_s I$, and $P$ satisfies a FAP$(\beta,0)$ with respect to $QA$, with $\beta \geq 1/2$ and constant $K_{P}$. Similarly,
assume that $R^*R\sim_s I$, and $R$ satisfies a FAP$(\gamma,0)$ with respect to $AQ$, with $\gamma \geq 1/2$ constant $K_{R}$,
where $\beta+ \gamma > 1$. Assume there exists  $k \leq n_c$ such that:
\begin{enumerate}
\item $\delta_{P} := \sigma_k^{\beta}K_{P}^{1/2}  < 1/\sqrt{2}$,  \qquad (Denote $\widehat{K}_{P} := K_{P}/(1-\delta_{P}^2)$ )
\item $\delta_R := \sigma_k^{\gamma }K_{R}^{1/2}  < 1/\sqrt{2}$,  \qquad (Denote $\widehat{K}_{R} := K_{R}/(1-\delta_{R}^2)$ )
\item $ \delta_{PR}^2:=\sigma_k^{\beta+\gamma -1} \widehat{K}_{P}^{1/2}\widehat{K}_{R}^{1/2}  < 1/2$.
\end{enumerate}
\tcb{Finally, if $k < n_c$, assume that}
\begin{equation}\label{eq:ass}
s_1 > \frac{\delta_{PR}^2}{(1-\delta_{PR}^2)}. \end{equation}
Then, $\|\Pi\|_{QA}^2  \leq C_{\Pi}$. A precise bound for $C_{\Pi}$ appears in (\ref{eqn:Cbound}) in the proof.
\end{theorem}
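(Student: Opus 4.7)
The plan is to reduce $\|\Pi\|_{QA}^2$ to the operator norm of an oblique projection $L'(K'L')^{-1}K'$ whose blocks are all $O(1)$, and then close with Lemma \ref{lem:bound_block}. Since $I-\Pi$ is invariant under column changes of basis on $R$ and $P$ by \eqref{eq:cgc_basis}, I begin by invoking Lemma \ref{lem:PRdecomp} and working with $\tilde R = RB_R$, $\tilde P = PB_P$; formula \eqref{eq:CGC_uv} then reads $\|\Pi\|_{QA}^2 = \|\Sigma^{1/2}\tilde{\mcP}(\tilde{\mcR}^*\Sigma\tilde{\mcP})^{-1}\tilde{\mcR}^*\Sigma^{1/2}\|^2$, with $\tilde{\mcP},\tilde{\mcR}$ in the explicit block forms of that lemma.

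The crux is a diagonal rescaling. With $D := \diag(\Sigma_1^{1/2}, I)$, factor $\Sigma^{1/2}\tilde{\mcP} = L'D$ and $\tilde{\mcR}^*\Sigma^{1/2} = DK'$, where
\begin{equation*}
L' = \begin{pmatrix} I & \mathbf{0} \\ -\Sigma_2^{1/2}\widehat{\mcN}_2\Sigma_1^{\beta-1/2} & \Sigma_2^{1/2}\mcW_2 \end{pmatrix}, \qquad K' = \begin{pmatrix} I & -\Sigma_1^{\gamma-1/2}\widehat{\mcM}_2^*\Sigma_2^{1/2} \\ \mathbf{0} & \mcZ_2^*\Sigma_2^{1/2} \end{pmatrix}.
\end{equation*}
Since $\tilde{\mcR}^*\Sigma\tilde{\mcP} = DK'L'D$, the $D$ factors telescope through the sandwich, giving $\|\Pi\|_{QA}^2 = \|L'(K'L')^{-1}K'\|^2$. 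This rescaling is exactly what forces $\beta,\gamma\geq 1/2$: the blocks $\widehat{\mcN}_2\Sigma_1^{\beta-1/2}$ and $\widehat{\mcM}_2\Sigma_1^{\gamma-1/2}$ are then controlled by $\sigma_k^{\beta-1/2}\widehat K_P^{1/2}$ and $\sigma_k^{\gamma-1/2}\widehat K_R^{1/2}$ via the estimate $\|\widehat{\mcN}_2\Sigma_1^{\beta-\ell}\|\leq \sigma_k^{\beta-\ell}\widehat K_P^{1/2}$ (valid for $\ell\leq\beta$) derived at the end of Section \ref{sec:2grid:basis}.

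Next I expand $K'L'$ as a $2\times 2$ block matrix, using $\mcW_2^*\Sigma_2\mcW_2 = I$, $\mcZ_2^*\Sigma_2\mcZ_2 = I$, and $\mcZ_2^*\Sigma_2\mcW_2 = S_2$. The diagonal blocks are $I + E_{11}$, with $E_{11} := \Sigma_1^{\gamma-1/2}\widehat{\mcM}_2^*\Sigma_2\widehat{\mcN}_2\Sigma_1^{\beta-1/2}$, and $S_2$; the off-diagonal blocks absorb the isometries $\Sigma_2^{1/2}\mcW_2$ and $\Sigma_2^{1/2}\mcZ_2$ to yield norm bounds $b := \sigma_k^{\gamma-1/2}\widehat K_R^{1/2}$ and $c := \sigma_k^{\beta-1/2}\widehat K_P^{1/2}$. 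Splitting $\Sigma_2 = \Sigma_2^{1/2}\Sigma_2^{1/2}$ in $E_{11}$ gives $\|E_{11}\|\leq bc = \sigma_k^{\beta+\gamma-1}\widehat K_P^{1/2}\widehat K_R^{1/2} = \delta_{PR}^2$. Applying Lemma \ref{lem:bound_block} with $a_0 = 1-\delta_{PR}^2$, $a_1 = 1+\delta_{PR}^2$, $d_0 = s_1$, $d_1 = 1$, the compatibility hypothesis $a_0d_0 > bc$ becomes $s_1(1-\delta_{PR}^2) > \delta_{PR}^2$, which is precisely \eqref{eq:ass}; the lemma then yields $\|K'L'\mathbf{x}\|^2 \geq \eta_0\|\mathbf{x}\|^2$ for the explicit $\eta_0 > 0$, so $\|(K'L')^{-1}\|^2 \leq 1/\eta_0$.

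Upper bounds on $\|L'\|^2$ and $\|K'\|^2$ follow from the same block estimates, or equivalently from a second invocation of Lemma \ref{lem:bound_block} for the constant $\eta_1$; submultiplicativity then gives $\|\Pi\|_{QA}^2 \leq \|L'\|^2\|K'\|^2/\eta_0 =: C_\Pi$, with $C_\Pi$ explicit in $\delta_P,\delta_R,\delta_{PR},s_1,\widehat K_P,\widehat K_R$. The sub-case $k=n_c$ is simpler: the $\mcW_2,\mcZ_2$ components vanish, $K'L' = I+E_{11}$ with $\|E_{11}\|\leq \delta_{PR}^2 < 1/2$, and a Neumann series gives $\|(K'L')^{-1}\|\leq 1/(1-\delta_{PR}^2)$, bypassing \eqref{eq:ass}. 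The main obstacle is discovering the rescaling $D$: without it, a direct bound on $(\tilde{\mcR}^*\Sigma\tilde{\mcP})^{-1}$ picks up spurious factors of $1/\sigma_1$ from the top-left $\Sigma_1$-block and the argument cannot close; with it, the problem reduces to $K'L'$, whose cross-term $bc$ collapses exactly to $\delta_{PR}^2$ and so matches the Lemma \ref{lem:bound_block} threshold against the stated hypothesis \eqref{eq:ass} — this matchup, together with pinning down the correct powers $\beta-1/2,\gamma-1/2$ of $\Sigma_1$, is the heart of the argument.
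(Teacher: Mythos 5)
Your proposal is correct and follows essentially the same route as the paper: the paper's proof jumps directly to the three-factor identity $\|\Pi\|_{QA}^2 = \|L'(K'L')^{-1}K'\|^2$ (its displays \eqref{eqn:first_term}--\eqref{eqn:third_term} are exactly your $L'$, $(K'L')^{-1}$, $K'$), and then applies Lemma~\ref{lem:bound_block} to each factor with the identical block bounds $a_0 = 1-\delta_{PR}^2$, $d_0 = s_1$, $b = \sigma_k^{\gamma-1/2}\widehat K_R^{1/2}$, $c = \sigma_k^{\beta-1/2}\widehat K_P^{1/2}$, matching $a_0 d_0 > bc$ to hypothesis \eqref{eq:ass}. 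The one thing you add is a clean derivation of that factorization via the rescaling $D = \diag(\Sigma_1^{1/2}, I)$ and its telescoping through $L'D\,(DK'L'D)^{-1}\,DK'$ --- a nice way to expose \emph{why} the powers $\beta-\tfrac12$, $\gamma-\tfrac12$ appear and why $\beta,\gamma\ge\tfrac12$ is needed, but it produces the same estimate and does not change the argument in substance.
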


\begin{proof} 
First note that the assumptions here satisfy those of Lemma \ref{lem:PRdecomp}. \tcb{The proof of the case in which $k=n_c$ is a simplification 
of the proof for the case $k<n_c$ and will be omitted. Assume $k<n_c$.}
\tcb{
Using the fact that $\|\Pi\|_{QA}$ is invariant to a change of basis, appealing to \eqref{eq:CGC_uv}, and using the decomposition of $R$ and $P$ developed in Lemma \ref{lem:PRdecomp}, we have
}
\begin{align} \nonumber
\| \Pi\|_{QA}^2 &= \| \Sigma^{1/2}\mcP (\mcR^*\Sigma\mcP)^{-1} \mcR^*\Sigma^{1/2} \|^2  \\\nonumber
& = \| \Sigma^{1/2}\tilde{\mcP} (\tilde{\mcR}^*\Sigma \tilde{\mcP})^{-1} \tilde{\mcR}^*\Sigma^{1/2} \|^2 \\ \label{eqn:first_term}&= \left\| \left[\begin{array}{cc} I & \mathbf{0} \\ -\Sigma_2^{1/2}\widehat{\mcN}_2\Sigma_1^{\beta-1/2} & \Sigma_2^{1/2}\mcW_2 \end{array}\right] \right. \\
 \label{eqn:middle_term}&\qquad\qquad\left[ \begin{array}{cc} I+\Sigma_1^{\gamma-1/2} \widehat{\mcM}_2^*\Sigma_2\widehat{\mcN}_2 \Sigma_1^{\beta-1/2}
& -\Sigma_1^{\gamma-1/2}\widehat{\mcM}_2^*\Sigma_2\mcW_2 \\ 
-\mcZ_2^*\Sigma_2\widehat{\mcN}_2\Sigma_1^{\beta-1/2} & S_2 \end{array}\right]^{-1} \\ \label{eqn:third_term}&\left.\qquad\qquad\qquad\qquad  \left[\begin{array}{cc} I & -\Sigma_1^{\gamma-1/2} \widehat{\mcM}_2^*\Sigma_2^{1/2} \\ \mathbf{0} & \mcZ_2^*\Sigma_2^{1/2}\end{array}\right]  \right\|^2 .
\end{align}
We will bound each of these three $2\times 2$ block matrices using Lemma \ref{lem:bound_block}. Nonzero off-diagonal blocks must
be bounded from above in each case, which can be done using Lemma \ref{lem:PRdecomp}, the orthonormality of
$\mcW_2^*{\Sigma_2}\mcW_2 = \mcZ_2^*{\Sigma_2}\mcZ_2 = I$, and the scaling of $A$ such that $\sigma_i\leq 1$ for all $i$:
\begin{align*}
\| \mcZ_2^*\Sigma_2\widehat{\mcN}_2\Sigma_1^{\beta-1/2} \| &\leq \|\Sigma_2^{1/2}\widehat{\mcN}_2\Sigma_1^{\beta-1/2}\|
	\leq \sigma_k^{\beta-1/2}\|\widehat{\mcN}_2\| \leq \sigma_k^{\beta-1/2}\widehat{K}_{P}^{1/2}, \\
\| \Sigma_1^{\gamma-1/2}\widehat{\mcM}_2^*\Sigma_2\mcW_2  \| &\leq \| \Sigma_1^{\gamma-1/2} \widehat{\mcM}_2^*\Sigma_2^{1/2}\|
	\leq \sigma_k^{\gamma-1/2}\|\widehat{\mcM}_2\| \leq \sigma_k^{\gamma-1/2}\widehat{K}_{R}^{1/2}.
\end{align*}
Note that this is where the assumption of $\gamma,\beta \geq 1/2$ is important.
Both diagonal blocks of the first term \eqref{eqn:first_term} and third term \eqref{eqn:third_term} are bounded above and below
by one; the upper diagonal block in each case is the identity, and the lower diagonal blocks are given by $\|\Sigma_2^{1/2}\mcW_2\| =
\|\mcZ_2^*\Sigma_2^{1/2}\| = 1$. Diagonal blocks in the middle term can be bounded in a similar manner, noting that
\begin{align*}
(1- \delta_{PR}^2)\|\mathbf{x}\| &\leq \left\| (I + \Sigma_1^{\gamma-1/2} \widehat{\mcM}_2^*\Sigma_2\widehat{\mcN}_2 \Sigma_1^{\beta-1/2})\mathbf{x}\right\|
	\leq (1+ \delta_{PR}^2)\|\mathbf{x}\|, \\
s_1 \|\mathbf{x}\| &\leq \| S_2\mathbf{x} \| \leq \| \mathbf{x} \|. 
\end{align*}

Then, the first term (\ref{eqn:first_term}) and third term (\ref{eqn:third_term}) are easily bounded above:
{\small
\begin{align*}
\left\| \left[\begin{array}{cc} I & 0 \\ -\Sigma_2^{1/2}\widehat{\mcN}_2\Sigma_1^{\beta-1/2} & \Sigma_2^{1/2}\mcW_2 \end{array}\right] \right\|^2
& \leq 1 + \frac{\sigma_k^{2\beta-1}\widehat{K}_{P} + \sqrt{\sigma_k^{4\beta-2}\widehat{K}_{P}^2 + 4\sigma_k^{2\beta-1}\widehat{K}_{P}}}{2} \\
& < 2 + \sigma_k^{2\beta-1}\widehat{K}_{P}, \\
 \left\|\left[\begin{array}{cc} I & -\Sigma_1^{\gamma-1/2} \widehat{\mcM}_2^*\Sigma_2^{1/2} \\ 0 & \mcZ_2^*\Sigma_2^{1/2}\end{array}\right]  \right\|^2
& \leq 1 + \frac{\sigma_k^{2\gamma-1}\widehat{K}_{R} + \sqrt{\sigma_k^{4\gamma-2}\widehat{K}_{R}^2 + 4\sigma_k^{2\gamma-1}\widehat{K}_{R}}}{2} \\
& < 2 + \sigma_k^{2\gamma-1}\widehat{K}_{R}. 
\end{align*}
}To bound the middle term \eqref{eqn:middle_term} from above, note that if $\eta_0\|\mathbf{\mathbf{x}}\|^2 \leq \|A\mathbf{\mathbf{x}}\|^2$, then
\begin{align*}
\|A^{-1}\| = \sup_{\mathbf{\mathbf{x}}\neq\mathbf{0}} \frac{\|A^{-1}\mathbf{\mathbf{x}}\|^2}{\|\mathbf{\mathbf{x}}\|^2} 
	= \sup_{\mathbf{y}\neq\mathbf{0}} \frac{\|\mathbf{y}\|^2}{\|A\mathbf{y}\|^2} \leq \frac{1}{\eta_0}.
\end{align*}
In notation of Lemma \ref{lem:bound_block}, blocks of the middle term have bounds 
\begin{equation*}
a_0 = 1-\delta_{PR}^2, \hspace{4ex} d_0 = s_1,\hspace{4ex} b = \sigma_k^{(\gamma-1/2)}\widehat{K}_{R}^{1/2}, \hspace{4ex} c  = \sigma_k^{(\beta-1/2)}\widehat{K}_{P}^{1/2}.
\end{equation*}
Lemma \ref{lem:bound_block} applies when $a_0d_0 > bc$. Plugging in, this constraint is satisfied when
\begin{equation}\label{eqn:hyp4}
1 \geq s_1 > \frac{\delta_{PR}^2}{1-\delta_{PR}^2 } .
\end{equation}
Equation (\ref{eqn:hyp4}) is the final assumption above, which requires $\delta_{PR}^2 < 1/2$ (Assumption 3), 
which can only be guaranteed if
$\beta+\gamma > 1$. Lemma \ref{lem:bound_block} then yields
\begin{align*}
\eta_0 & = \frac{a_0^2 + b^2 + c^2 + d_0^2 - \sqrt{(a_0^2+b^2-c^2-d_0^2)^2 + 4(a_0c+bd_0)^2}}{2} > 0,\\
\end{align*}

Putting this all together yields
\begin{equation}\label{eqn:Cbound}
\| \Pi\|_{QA}^2 \leq \frac{(1+ \sigma_k^{2\beta-1} \widehat{K}_{P})(1+ \sigma_k^{2\gamma-1} \widehat{K}_{R})}{\eta_0}.
\end{equation}
\end{proof}

Equation \eqref{eqn:Cbound} provides clear separation of three measures of an AMG hierarchy: the two terms in the numerator
reflect the approximation properties on $R$ and $P$, and the size of the denominator reflects the relation of the action of
$R$ and $P$ on singular vectors associated with larger singular values. Note that the approximation properties of
$R$ and $P$ do not have to be equal. This proof of stability requires at least a FAP$(1/2,0)$ (WAP) on each, and together, hypotheses
require the slightly stronger statement, $\beta+\gamma > 1$.\footnote{A similar derivation under the stronger initial assumption that
$V_1^*P$ and $U_1^*R$ are nonsingular for $k=n_c$ leads to stability, with similar assumptions on the action of $R$ and $P$ on singular vectors
associated with larger singular values, and $\beta+\gamma = 1$; that is, $R$ and $P$ both satisfy a FAP$(1/2,0)$. The stronger requirement
in Theorem \ref{th:stability}, $\beta+\gamma > 1$, may be a shortcoming of this line of proof.} Beyond satisfying a FAP$(1/2,0)$, stronger
approximation properties of $P$ or $R$ are reflected through larger $\beta$ and $\gamma$, both of which reduce the
bound on $\|\Pi\|_{QA}$. 

For larger singular values, approximation properties hold trivially and, for SPD matrices \tcb{with $R=P$}, this means that one need only
pay attention to singular vectors with small singular values. \tcb{In the nonsymmetric setting, stability requires the additional
final assumption in Theorem \ref{th:stability} \eqref{eq:ass},
which establishes a relationship between $\delta_{PR}$ and $s_1$.} This hypothesis is derived
from relating the action of $R$ and $P$ on singular vectors associated with larger singular values. From Lemma \ref{lem:PRdecomp}, we know that
$\mcZ_2^*\Sigma_2\mcW_2 = \text{diag}[s_1,...,s_{n_c-k}]$, where these values are the cosines of angles between subspaces
$W_2$ and $QZ_2$. For example, suppose the $j$th right singular vector $\mathbf{v}_j\subset \mathbf{R}(P)$ for $j > k$,
but the $j$th left singular vector $\mathbf{u}_j{\perp} \mathbf{R}(R)$. Then there exists a vector $\mathbf{x}$ such that
$\langle W_2\mathbf{x},QZ_2\mathbf{y}\rangle = \langle UV^*\mathbf{v}_j, Z_2\mathbf{y}\rangle = \langle \mathbf{u}_j, Z_2\mathbf{y}\rangle = 0$
for all $\mathbf{y}$. Then, $\theta_{max} = \pi/2$, $s_1 = 0$, and we do not have stability (see Remark \ref{rem:angle} and
Example \ref{ex:counter}). Thus, $R$ and
$P$ must have a similar action on left and right singular vectors associated with large singular values, respectively. How strong the
constraint is depends on approximation properties. When $\delta_{PR}  = 0$, the constraint is $s_1 > 0$; that is, $S_2$ need
only be nonsingular. When $\delta_{PR} \geq 1/\sqrt{2}$, the restriction is $s_1 > 1$ which is not possible to satisfy. By choosing a
smaller $k$, $\delta_{PR}$ can be made smaller. However, choosing $k$ smaller also makes the dimension of spaces
$W_2$ and $Z_2$ larger,
which makes $s_1$ smaller, and less likely to satisfy the constraint. The hypotheses hold only if there is some $k$ 
with $\sigma_k \sim O(1)$ for which all the hypotheses, including 
\eqref{eq:ass} holds. 

Stronger approximation properties, through either smaller constants, $K_{P}$ and $K_{R}$, or larger $\beta$ and $\gamma$,
make $\delta_{PR}$ smaller. This makes it easier to satisfy the hypotheses of Theorem \ref{th:stability} and, in particular, the
constraint on $s_1$. It is also worth considering how accurate approximation properties must be. Suppose we assume equal
approximation properties on $R$ and $P$ with power $\beta = \delta = \delta_P = \delta_R$ and $K_F := K_{P} = K_{R}$.
Then, bounding $\delta_{PR}^2 < 1/2$ is equivalent to 
\begin{align*}
K_F <\frac{1}{\sigma_k^{2\beta-1}} \frac{1}{2+ \sigma_k}.
\end{align*}
Under this line of proof, more accurate approximation (smaller $K_F$) is required for weaker approximation properties (smaller $\beta$),
while stronger approximation properties (larger $\beta$) can tolerate a larger $K_F$. Large $\sigma_k$ also requires a more accurate
approximation through smaller $K_F$.

{\color{black}
\begin{remark}
Theorem \ref{th:stability} is stated and proven in full generality. It is important to note that this
same line of proof is viable, and yields the correct result, in the limit as the system becomes SPD.
More generally, consider the case in which $R = Q^*P$.  As mentioned above, this yields $\Pi$ as the
$QA$-orthogonal projection onto $\mathbf{R}(P)$ and $\|\Pi\|_{QA} = 1$.  
If $A$ is SPD, it becomes a special case in which $Q = I$ and $R=P$.
Consider the bases constructed in Lemma \ref{lem:PRdecomp}. The condition $\tilde{R}=Q^*\tilde{P}$ is equivalent to
$\tmcR = U^*\tilde{R} = V^* \tilde{P} = \tmcP$. 
Moreover, assume $\| U^*R-V^*P\|_{QA} = \|\mcR-\mcP\|_{\Sigma} \leq \epsilon$. The proof of Theorem \ref{th:stability}
can be used to show that the limit as $\epsilon \rightarrow 0$ yields $\|\Pi\|_{QA} = 1.0$.
\end{remark}
}

\section{Multilevel convergence}
\label{sec:multigrid}

Recall from \eqref{eq:cgc_basis} that coarse-grid correction is invariant under a change of basis, $\tilde{P} = {P}B_P$ and
$\tilde{R} = {R}B_R$, for change of basis matrices $B_P$ and $B_R$. Here, we use the bases developed in Lemma \ref{lem:PRdecomp}
to consider multilevel convergence in the nonsymmetric setting. There are two approximations that must be accounted for in considering
multilevel error propagation of coarse-grid correction, which do not arise in the two-level setting. First, and consistent with SPD multigrid theory,
we must account for an inexact coarse-grid solve given by recursively calling AMG on the coarse-grid problem. The nonsymmetric setting
poses additional difficulties in this recursive call. Specifically, some correction is interpolated to the fine grid, which assumes an
inner-product form along the lines of:
\begin{align*} 
\langle P\mathcal{V}_c\mathbf{e}_c, P\mathcal{V}_c\mathbf{e}_c\rangle_{QA} = \langle \mathcal{V}_c\mathbf{e}_c,\mathcal{V}_c\mathbf{e}_c,\rangle_{P^*QAP},
\end{align*}
where $\mathcal{V}_c$ is the error-propagation operator of the approximate coarse-grid solve. For SPD matrices, $P^*QAP =P^*AP = A_c$,
which is exactly the coarse-grid operator formed in practice, on which a recursive assumption is made, $\|\mathcal{V}_c\|_{P^*QAP} < 1$.
In the nonsymmetric setting, the coarse-grid operator is defined as $A_c:=R^*AP$, and the corresponding $Q_cA_c := (A_c^*A_c)^{\frac{1}{2}}$-norm
that we are studying is no longer equal to $P^*QAP$. Then, the
recursive assumption of coarse-grid convergence is with respect to the $\sqrt{A_c^*A_c}$-norm, as opposed to the $P^*QAP$-norm. Thus,
a fundamental piece of proving multilevel AMG convergence in the nonsymmetric setting is  to prove an equivalence between inner products 
${P}^*QA{P}$ and $(A_c^*A_c)^{\frac{1}{2}}$.

Conditions for equivalence between inner products are established in Section \ref{sec:multigrid:equiv}. Section \ref{sec:multigrid:conv}
then combines all of the pieces developed so far and \tcbb{establishes sufficient conditions for $W$-cycle convergence of AMG in the nonsymmetric setting. }

\tcbb{
Notationally, let the hierarchy consist of $L$ levels, where the original operator is denoted $A=A_1$ and the sequence of transfer operators
by $P_\ell, R_\ell$, for $\ell = 1,\ldots, L$. These
yield the sequence of coarse grid operators, $A_{\ell+1} = R_\ell^*A_\ell P_\ell$ of dimension $n_{\ell+1}$. 
Assume that $P_\ell$ and $R_\ell$ are chosen so that $\|A_{\ell+1} \|= 1$. Denote the singular values of $A_\ell$ by
 $0< \sigma_{1}^\ell \leq \sigma_{2}^\ell \leq\cdots\leq \sigma_{n_\ell}^\ell = 1$. 
 Assume that the next coarser level is chosen sufficiently large, $n_{\ell+1} < n_\ell$, such that $C_\sigma \leq \sigma_{n_{\ell+1}}^\ell \leq 1$ 
 where $C_\sigma \sim O(1)$, independent of grid level (when the meaning  is clear, the superscripts $\ell$ will be omitted).
}

\subsection{Equivalence of inner products}\label{sec:multigrid:equiv}

Proving the necessary equivalence of inner products will be accomplished by proving a stronger statement, the norm equivalence
of $A_c:= R^*AP$ and $P^*QAP$. Notice that 
\begin{align}\label{eq:equivIP1}
\frac{\| A_c \mathbf{x} \|^2}{\| P^*(QA) P\mathbf{x} \|^2} 
	= \frac{\| (A_c^*A_c)^{1/2} \mathbf{x}\|^2}{\| P^*(QA) P\mathbf{x} \|^2},
\end{align}
that is, $A_c\sim_n P^*QAP$ is equivalent to $(A_c^*A_c)^{\frac{1}{2}}\sim_n P^*QAP$.
Given that $(A_c^*A_c)^{\frac{1}{2}}$ and ${P}^*QA{P}$ are both self-adjoint, norm equivalence
implies spectral equivalence, with the same constants \cite{Faber:1990ed}. Spectral equivalence,
$(A_c^*A_c)^{\frac{1}{2}}\sim_s P^*QAP$, then gives bounds used in the proof of multilevel convergence:
\begin{align}
c_0 \leq \frac{\|\mathbf{y}_c\|^2_{(A_c^*A_c)^{\frac{1}{2}}}}{\|{P}\mathbf{y}_c\|^2_{QA}\hspace{2.5ex}} \leq 
	c_1,\label{eq:equiv9}
\end{align}
for some constants, $c_0$ and $c_1$, and all coarse-grid vectors $\mathbf{y}_c$. 

Norm equivalence is proven in Lemma \ref{lem:equivIP}. Conditions are consistent with those sufficient for stability
(Theorem \ref{th:stability}), with an additional, stronger approximation property assumed on $P$: $\beta \geq 1$.
That is, for this result $P$ must satisfy a SSAP or, equivalently, a SAP. \tcb{On the other hand, the basic requirement on
$R$ is only $\gamma > 0$. Of course, larger $\gamma$, that is, better approximation properties of $R$, make satisfying
the other hypotheses easier. Moreover, if $R=Q^*P$, then the equivalence is immediate.}

\begin{lemma}[Equivalence of Inner Products]\label{lem:equivIP}
Assume that $P^*P\sim_s I$, and $P$ satisfies a FAP$(\beta,0)$ with respect to $QA$, with $\beta \geq 1$ and constant $K_{P}$.
Assume that $R^*R\sim_s I$, and that 
$R$ satisfies a FAP$(\gamma,0)$ with respect to $AQ$, with  $\gamma > 0$ and constant $K_{R}$. (Note,
$\beta+\gamma > 1$). In addition, assume there exists  $k \leq n_c$ such that the decompositions of $R$ and $P$ in 
Lemma \ref{lem:PRdecomp} satisfy
\begin{enumerate}
\item $\delta_P:=\sigma_k^{\beta}K_{P}^{1/2}  < 1/\sqrt{2}$,  \qquad (denote $\widehat{K}_{P} := K_{P}/(1-\delta_{P}^2)$ )
\item $\delta_{R} := \sigma_k^{\gamma}K_{R}^{1/2} < 1/\sqrt{2}$,  \qquad (denote $\widehat{K}_{R} := K_{R}/(1-\delta_{R}^2)$ )
\item $ \delta_{PR}^2 := \sigma_k^{\beta+\gamma-1}(\widehat{K}_{P}\widehat{K}_{R})^{1/2} { < 1/2}$,
\item $\begin{aligned}
s_1 > \frac{\delta_{PR}^2}{(1-\delta_{PR}^2)}. \qquad\qquad\quad \tcb{(when ~~ k<n_c)}
\end{aligned}$
\item $\hat{\delta}_P^2 := \sigma_k^{2\beta-1}\widehat{K}_{P}  < 1$,

\end{enumerate}
Then, there exist constants, $0< c_0\leq c_1$, such that, $\forall$ $\mathbf{x}$
\begin{equation}\label{eq:equivIP_lem}
c_0 \leq \frac{\| (A_c^*A_c)^{1/2} \mathbf{x} \|^2}{\| P^*(QA) P\mathbf{x} \|^2} \leq c_1.
\end{equation}
The constants are specified below.
\end{lemma}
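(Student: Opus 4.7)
The plan is to reduce the claim, via the change of basis from Lemma \ref{lem:PRdecomp}, to showing norm equivalence of $\tilde A_c := \tilde R^*A\tilde P$ and $\tilde P^* QA\tilde P$ with $\tilde P = PB_P$ and $\tilde R = RB_R$. Because $B_P^*B_P \sim_s I \sim_s B_R^*B_R$, writing $\mathbf x = B_P\tilde{\mathbf x}$ gives $A_c\mathbf x = B_R^{-*}\tilde A_c\tilde{\mathbf x}$ and $P^* QAP\mathbf x = B_P^{-*}\tilde P^* QA\tilde P\tilde{\mathbf x}$, so \eqref{eq:equivIP_lem} reduces (up to fixed constants) to $\|\tilde A_c\tilde{\mathbf x}\|^2 \sim \|\tilde P^* QA\tilde P\tilde{\mathbf x}\|^2$. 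Using the block decompositions and abbreviating $X := \widehat{\mcN}_2\Sigma_1^\beta$, $Y := \widehat{\mcM}_2\Sigma_1^\gamma$, $\mathbf v_1 := \Sigma_1\tilde{\mathbf x}_1$, and $\mathbf v_2 := \Sigma_2(-X\tilde{\mathbf x}_1 + \mcW_2\tilde{\mathbf x}_2)$, a direct calculation yields
\begin{align*}
\|\tilde P^* QA\tilde P\tilde{\mathbf x}\|^2 &= \|\mathbf v_1 - X^*\mathbf v_2\|^2 + \|\mcW_2^*\mathbf v_2\|^2, \\
\|\tilde A_c\tilde{\mathbf x}\|^2 &= \|\mathbf v_1 - Y^*\mathbf v_2\|^2 + \|\mcZ_2^*\mathbf v_2\|^2.
\end{align*}
Remarkably, the difference collapses to $(\tilde A_c - \tilde P^* QA\tilde P)\tilde{\mathbf x} = \bigl((X-Y)^*\mathbf v_2,\ (\mcZ_2-\mcW_2)^*\mathbf v_2\bigr)^{\!\top}$, which depends only on $\mathbf v_2$. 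This is the starting point for the perturbation estimate.

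Next, I would express $\mathbf v_2$ in terms of $\mathbf y := \tilde P^* QA\tilde P\tilde{\mathbf x}$ so that the perturbation may be bounded uniformly in $\tilde{\mathbf x}$. Solving $\mathbf y_1 = \mathbf v_1 - X^*\mathbf v_2$ for $\tilde{\mathbf x}_1 = \Sigma_1^{-1}(\mathbf y_1 + X^*\mathbf v_2)$ and substituting into $\mathbf v_2 = -TX\tilde{\mathbf x}_1 + \Sigma_2\mcW_2\mathbf y_2$, with $T := \Sigma_2^{1/2}(I - P_W)\Sigma_2^{1/2}$ and $P_W := \Sigma_2^{1/2}\mcW_2\mcW_2^*\Sigma_2^{1/2}$, produces the Woodbury-type identity
\begin{equation*}
(I + T\tilde X\tilde X^*)\mathbf v_2 = -T\tilde X\Sigma_1^{-1/2}\mathbf y_1 + \Sigma_2\mcW_2\mathbf y_2, \qquad \tilde X := X\Sigma_1^{-1/2}.
\end{equation*}
Hypothesis 5, $\hat\delta_P^2 = \sigma_k^{2\beta-1}\widehat K_P < 1$ (meaningful only because $\beta \geq 1$), controls $\|\tilde X\|^2 \leq \hat\delta_P^2 < 1$, so $I + T\tilde X\tilde X^*$ is uniformly invertible by a Neumann argument with $\|(I + T\tilde X\tilde X^*)^{-1}\| \leq 1/(1-\hat\delta_P^2)$. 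Substituting back gives $(X-Y)^*\mathbf v_2$ and $(\mcZ_2-\mcW_2)^*\mathbf v_2$ as explicit linear combinations of $\mathbf y_1$ and $\mathbf y_2$, with blocks controlled using the norm bounds from Lemma \ref{lem:PRdecomp}.

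The final step is to bound the resulting $2\times 2$ block expression for $(\tilde A_c - \tilde P^* QA\tilde P)\tilde{\mathbf x}$ in terms of $\mathbf y$ by invoking Lemma \ref{lem:bound_block}, and conclude $\|(\tilde A_c - \tilde P^* QA\tilde P)\tilde{\mathbf x}\| \leq c\|\mathbf y\|$ for some $c$ strictly less than one, which yields the equivalence with explicit $c_0, c_1$. Hypothesis 4, $s_1 > \delta_{PR}^2/(1-\delta_{PR}^2)$ (when $k < n_c$), plays the same role as in Theorem \ref{th:stability}: it guarantees strict positivity of the lower bound in Lemma \ref{lem:bound_block}. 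The main obstacle is exposing $\sigma_1$-independent cancellations between terms that individually carry $\Sigma_1^{-1/2}$ or, when $\gamma < 1/2$, $\Sigma_1^{\gamma-1/2}$ factors; these cancellations become visible through identities such as $X^*T\tilde X(I + \tilde X^*T\tilde X)^{-1}\Sigma_1^{-1/2} = I - \Sigma_1^{1/2}(I + \tilde X^*T\tilde X)^{-1}\Sigma_1^{-1/2}$, which rewrite pieces of the perturbation so that $\Sigma_1^{1/2}$ on one side cancels $\Sigma_1^{-1/2}$ on the other. The hypothesis $\beta + \gamma > 1$ (implied by $\beta \geq 1$ and $\gamma > 0$), combined with the smallness of $\delta_P, \delta_R, \delta_{PR}, \hat\delta_P$, is what makes these cancellations produce $c_0, c_1$ depending only on the stated hypothesis constants, independent of the smallest singular value of $A$.
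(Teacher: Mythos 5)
Your algebraic setup is correct: the identities
\[
\tilde P^*QA\tilde P\tilde{\mathbf x} = \begin{pmatrix}\mathbf v_1 - X^*\mathbf v_2 \\ \mcW_2^*\mathbf v_2\end{pmatrix},
\qquad
\tilde A_c\tilde{\mathbf x} = \begin{pmatrix}\mathbf v_1 - Y^*\mathbf v_2 \\ \mcZ_2^*\mathbf v_2\end{pmatrix},
\]
and the Woodbury identity for $\mathbf v_2$ both check out, and the observation that the difference $\tilde A_c - \tilde P^*QA\tilde P$ depends only on $\mathbf v_2$ is a genuinely nice reformulation that the paper does not use. However, your route to the conclusion is different from the paper's and, as written, has a gap. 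The paper does \emph{not} argue that $\tilde A_c$ is a small perturbation of $\tilde P^*QA\tilde P$. Instead it factors out $\diag(\Sigma_1,I)$ and establishes $\tilde{\mcR}^*\Sigma\tilde{\mcP}\sim_n\diag(\Sigma_1,I)\sim_n\tilde{\mcP}^*\Sigma\tilde{\mcP}$ by applying Lemma~\ref{lem:bound_block} separately to the two block matrices $\tilde{\mcR}^*\Sigma\tilde{\mcP}\diag(\Sigma_1^{-1},I)$ and $\tilde{\mcP}^*\Sigma\tilde{\mcP}\diag(\Sigma_1^{-1},I)$, then concatenates by transitivity of $\sim_n$. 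Each factor is bounded above and below independently; no contraction between the two is required.

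Your argument requires the much stronger claim that $\|(\tilde A_c - \tilde P^*QA\tilde P)\tilde{\mathbf x}\|\le c\|\tilde P^*QA\tilde P\tilde{\mathbf x}\|$ with $c<1$ uniformly, and you do not establish this. It is not clear the hypotheses deliver it. For instance, taking $\mcW_2=\mcZ_2$ so that the second block of the perturbation vanishes and restricting to $\tilde{\mathbf x}_1=0$, the first block reduces to $-\mathbf y_1 - Y^*\Sigma_2\mcW_2\mathbf y_2$ with $\mathbf y_1=-X^*\Sigma_2\mcW_2\mathbf y_2$. A crude but instructive worst-case estimate bounds the squared ratio by $(c_P+c_R)^2/(c_P^2+1)$ with $c_P=\sigma_k^\beta\widehat K_P^{1/2}$, $c_R=\sigma_k^\gamma\widehat K_R^{1/2}$; the hypotheses force $c_P,c_R<1$ and $c_Pc_R\le\sigma_k\delta_{PR}^2<1/2$, but they do not force $c_R(2c_P+c_R)<1$, so this ratio can exceed one within the admissible parameter range. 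The paper's two-step factorization avoids this entirely, because a positive lower bound on each factor from Lemma~\ref{lem:bound_block} requires only $a_0d_0>bc$, not a contraction.

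There is also a conceptual mismatch in your description of where hypothesis 4 enters. You say it ``guarantees strict positivity of the lower bound in Lemma~\ref{lem:bound_block}.'' That is its role in the paper's proof (the determinant condition $a_0d_0>bc$ for the middle factor). In your perturbation scheme you are only taking an \emph{upper} bound of a $2\times 2$ block matrix, so the lower-bound mechanism of Lemma~\ref{lem:bound_block} is not what controls the argument; what you actually need from $s_1$ is $\|S_2-I\|=1-s_1<1$, together with the smallness of the off-diagonal pieces, combining to a uniform $c<1$. That combination is precisely the unproved step. If you want to pursue your perturbation route, the fix is to not aim for $c<1$ at all, but rather to write $\tilde A_c\tilde{\mathbf x}=(I+M)\mathbf y$ and apply Lemma~\ref{lem:bound_block} to $I+M$ directly, which is then essentially the paper's strategy specialized to the post-Woodbury coordinates.
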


\begin{proof}

\tcb{The proof of the case in which $k=n_c$ is a simplification of the proof for the case $k<n_c$ and will be omitted. Assume $k<n_c$.}
Recall from Lemma \ref{lem:PRdecomp} and an appropriate choice of $k$ (by assumption), there are change of bases,
$\tilde{P} = PB_P$ and $\tilde{R} = RB_R$, such that
\begin{equation}\label{eqn:equivIP2}
\tilde{P} = V \tilde{\mcP}= \left[ V_1, V_2\right]\left[\begin{array}{cc} I_{k} & 0 \\ -\widehat{\mcN}_2\Sigma_1^\beta & \mcW_2 \end{array}\right],
	\quad
\tilde{R} =  U\tilde{\mcR}= \left[ U_1, U_2\right]\left[\begin{array}{cc} I_{k} & 0 \\ -\widehat{\mcM}_2\Sigma_1^\gamma & \mcZ_2 \end{array}\right],
\end{equation}
where 
\begin{enumerate}
\item $\mcW_2^*\Sigma_2\mcW_2 = \mcZ_2^*\Sigma_2\mcZ_2 = I$, 
\item $\mcW_2^*\widehat{\mcN}_2 = \mcZ_2^*\widehat{\mcM}_2 = \mathbf{0}$, 
\item $\| \widehat{\mcN}_2 \| \leq \widehat{K}_{P}^{1/2} $,  and $\| \widehat{\mcM}_2\| \leq \widehat{K}_{R}^{1/2}$,
\item $ \mcZ_2^*\Sigma_2\mcW_2 = S_2 =\diag{[ s_1, \ldots, s_{n_c-k}]} $ with $0 \leq s_1  \leq \ldots \leq s_{n_c-k} \leq 1$. 
\end{enumerate}
\vspace{1mm}
Here, $\mcP$ and $\mcR$ represent $\tilde{P}$ and $\tilde{R}$ transformed by the right and left singular 
vectors, respectively, to an $\ell^2$-space. By
assumption, there also exist constants such that
\begin{equation*}
\zeta_0 \leq \frac{\langle Px,Px\rangle}{\langle x,x\rangle} \leq \zeta_1,\hspace{4ex}
\xi_0 \leq \frac{\langle Rx,Rx\rangle}{\langle x,x\rangle} \leq \xi_1.
\end{equation*}
Using the proof of Lemma \ref{lem:PRdecomp},
\begin{align}\label{eqn:cob}
\frac{\zeta_0}{\max\{2,\frac{1}{\sigma_{k+1}}\}} &\leq \frac{\langle Px,Px\rangle}{\langle \tilde{P}x ,\tilde{P} x\rangle}  \leq  \zeta_1,\hspace{4ex}
\frac{\xi_0}{\max\{2,\frac{1}{\sigma_{k+1}}\}}  \leq \frac{\langle Rx,Rx\rangle}{\langle \tilde{R}x ,\tilde{R} x\rangle}  \leq  \xi_1.
\end{align}

Note that, if $\tilde{P} = PB_P$ and $\tilde{R}=RB_R$ satisfy  \eqref{eq:equivIP_lem}  with constants $\tilde{c}_0$ and $\tilde{c_1}$, 
then $R$ and $P$ satisfy  \eqref{eq:equivIP_lem} with constants 0
\tcbb{$c_0 = \tilde{c}_0/(\| B_P^{-1}\|\|B_R\|)^2 $ and $c_1 = (\| B_P\|\|B_R^{-1}\|)^2 \tilde{c}_1$.}
Thus, it is sufficient to establish bounds on \eqref{eq:equivIP_lem} with $R$ and $P$ replaced by $\tilde{R}$ and $\tilde{P}$. 
Further,
(\ref{eqn:equivIP2}) yields
\begin{equation*}
\frac{\langle \tilde{R}^*A\tilde{P} x, \tilde{R}^*A\tilde{P} x \rangle}{\langle \tilde{P}^*(QA) \tilde{P} x, \tilde{P}^*(QA) \tilde{P} x \rangle}
	= \frac{\langle \tmcR^*\Sigma\tmcP x, \tmcR^*\Sigma\tmcP x \rangle}{\langle \tmcP^*\Sigma\tmcP x, \mcP^*\Sigma\tmcP x \rangle}.
\end{equation*}
By transitivity of norm equivalence \cite{Faber:1990ed}, it is then sufficient to show
\begin{equation*}
\tmcR^*\Sigma\tmcP \sim_n \left[\begin{array}{cc} \Sigma_1 & 0 \\ 0 & I \end{array}\right] \sim_n \tmcP^*\Sigma\tmcP,
\end{equation*}
or, equivalently,
\begin{equation*}
\tmcR^*\Sigma\tmcP \left[\begin{array}{cc} \Sigma_1^{-1} & 0 \\ 0 & I \end{array}\right] \sim_n I \sim_n
	\tmcP^*\Sigma\tmcP\left[\begin{array}{cc} \Sigma_1^{-1} & 0 \\ 0 & I \end{array}\right].
\end{equation*}
Expanding,
\begin{align}
\tmcP^*\Sigma\tmcP\left[\begin{array}{cc} \Sigma_1^{-1} & 0 \\ 0 & I \end{array}\right]
	 &= \left[\begin{array}{cc} I + \Sigma_1^{\beta} \widehat{\mcN}_2^*\Sigma_2\widehat{\mcN}_2\Sigma_1^{\beta-1} &
	- \Sigma_1^{\beta} \widehat{\mcN}_2^*\Sigma_2\mcW_2\\ -\mcW_2^*\Sigma_2\widehat{\mcN}_2\Sigma_1^{\beta-1} & I \end{array}\right],
	 \label{eq:blockequiv2}\\
\tmcR^*\Sigma\tmcP\left[\begin{array}{cc} \Sigma_1^{-1} & 0 \\ 0 & I \end{array}\right] &=
	\left[\begin{array}{cc} I + \Sigma_1^{\gamma} \widehat{\mcM}_2^*\Sigma_2\widehat{\mcN}_2\Sigma_1^{\beta-1} 
	& -\Sigma_1^{\gamma} \widehat{\mcM}_2^*\Sigma_2\mcW_2\\ -\mcZ_2^*\Sigma_2\widehat{\mcN}_2\Sigma_1^{\beta-1}
	& S \end{array}\right].
	\label{eq:blockequiv1}
\end{align}

Next, we will invoke Lemma \ref{lem:bound_block} to bound the action of each of these operators from above and below. This will imply
norm equivalence to the identity and complete the proof. In each case, the diagonal blocks must be bounded from above and
below, and the off-diagonal blocks from above. In the case of the lower bound, there are additional requirements on the bounds
(see Lemma \ref{lem:bound_block}), which we verify are satisfied. For ease of notation, we denote bounds using notation of
Lemma \ref{lem:bound_block}:
\begin{align*}
\left[\begin{array}{cc} ~~A & -B \\ -C & ~~D \end{array}\right]  \hspace{3ex} \mapsto \hspace{3ex}
	\begin{array}{c r }
	a_0\|\mathbf{x}\| \leq \|A\mathbf{x}\| \leq a_1\|\mathbf{x}\|,& \hspace{4ex} \|B\mathbf{x}\| \leq b\|\mathbf{x}\|,\\
	d_0\|\mathbf{x}\| \leq \|D\mathbf{x}\| \leq d_1\|\mathbf{x}\|,& \hspace{4ex} \|C\mathbf{x}\| \leq c\|\mathbf{x}\|,
	\end{array}
\end{align*}
for $a_0,d_0 > 0$, $a_1,b,c,d_1\geq 0$, and $a_0d_0 > bc$. Most of these bounds have been shown previously and, in all cases,
follow naturally from the bases constructed in Section \ref{sec:2grid:basis} and Lemma \ref{lem:PRdecomp}. \\

\noindent
$\begin{aligned}
\textnormal{\underline{Equation \eqref{eq:blockequiv2}}:}\hspace{4ex}
	\|\mathbf{x}\| \leq &\|A\mathbf{x}\| \leq (1+\hat{\delta}_{P}^2)\|\mathbf{x}\|, \hspace{4ex} \| B \mathbf{x}\|\leq \sigma_k^{\beta}      \widehat{K}_{P}^{1/2}\|\mathbf{x}\|, \\
	\| \mathbf{x} \| \leq &\|D\mathbf{x}\| \leq \|\mathbf{x}\| , \hspace{12.25ex} \| C\mathbf{x} \| \leq \sigma_k^{\beta-1}
	\widehat{K}_{P}^{1/2}\|\mathbf{x}\|.
\end{aligned}$\vspace{1.5ex}

\noindent
Note that \tcb{$\|B\|\|C\| \leq \hat{\delta}_P^2$}. 
Here, $\beta \geq 1$, all terms are bounded independent of $\Sigma_1$, and the determinant bound $a_0d_0-bc = 1-\hat{\delta}_{P}^2 > 0$
is satisfied. Application of Lemma \ref{lem:bound_block} yields the result.\footnote{Slightly better bounds can be obtained for
$P^*QAP$ by directly proving spectral equivalence; however, the proof is longer and is not significant to the final result.}\\
\\
\noindent
$\begin{aligned}
\textnormal{\underline{Equation \eqref{eq:blockequiv1}}:}\hspace{4ex}
(1-\delta_{PR}^2) \|\mathbf{x}\| \leq  & \|A\mathbf{x}\| \leq  (1+ \delta_{PR}^2) \|\mathbf{x}\|, \hspace{4ex}
	\tcb{\|B\mathbf{x} \| \leq \sigma_k^{\gamma}\widehat{K}_{R}^{1/2}\|\mathbf{x}\|,} \\
	s_1\|\mathbf{x}\| \leq & \|D\mathbf{x} \| \leq \|\mathbf{x}\| , \hspace{14ex}
	\tcb{\|C\mathbf{x}\| \leq \sigma_k^{\beta-1}\widehat{K}_{P}^{1/2}\|\mathbf{x}\|.}
\end{aligned}$\\

\noindent
\tcb{Note that $\|B\| \|C\| \leq \delta_{PR} < 1/2$ by Hypothesis $5$.} 
Lemma \ref{lem:bound_block} applies here if each term is bounded independent of $\Sigma_1$ and
$s_1 >  \frac{\delta_{PR}^2}{ (1-\delta_{PR}^2)}$, which is ensured by Hypothesis $4$ and the assumption that
$\beta+\gamma > 1$. 

Constants $\tilde{c}_0$, $\tilde{c_1}$ can be found by applying Lemma \ref{lem:bound_block}. Finally, 
(\ref{eqn:cob}) may be used to find $c_0$ and $c_1$.
\end{proof}

\begin{corollary}
If all assumptions in Lemma \ref{lem:equivIP} are independent of grid level, then $R^*AP\sim_nP^*QAP$, with
constants independent of grid level.
\end{corollary}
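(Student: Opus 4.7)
The plan is to observe that this corollary is essentially a bookkeeping argument on top of Lemma \ref{lem:equivIP}. First I would note that, by \eqref{eq:equivIP1}, $A_c = R^*AP$ and $(A_c^*A_c)^{1/2}$ have identical norms on every vector, so $A_c \sim_n (A_c^*A_c)^{1/2}$ with constants equal to one. Hence the desired conclusion $R^*AP \sim_n P^*QAP$ follows from $(A_c^*A_c)^{1/2} \sim_n P^*QAP$ with the same constants, and Lemma \ref{lem:equivIP} delivers precisely this equivalence at each single level.

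The substantive content of the proof is therefore to verify that the constants $c_0, c_1$ produced by Lemma \ref{lem:equivIP} depend only on quantities that are being held fixed across levels by hypothesis. I would go through the proof of Lemma \ref{lem:equivIP} and track exactly which quantities enter the final constants: the approximation property constants $K_{P_\ell}, K_{R_\ell}$ and exponents $\beta, \gamma$; the derived quantities $\delta_{P_\ell}, \delta_{R_\ell}, \delta_{P_\ell R_\ell}, \hat{\delta}_{P_\ell}$; the lower bound $s_1^\ell$ on angles between $\mathbf{R}(W_2^\ell)$ and $\mathbf{R}(Q_\ell Z_2^\ell)$; the threshold $\sigma_{n_{\ell+1}}^\ell \geq C_\sigma$ that controls $\sigma_{k+1}^\ell$ in the change-of-basis bound \eqref{eqn:cob}; and the spectral-equivalence constants $\zeta_0,\zeta_1,\xi_0,\xi_1$ for $P_\ell^*P_\ell \sim_s I$ and $R_\ell^*R_\ell \sim_s I$. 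The two applications of Lemma \ref{lem:bound_block} to the block matrices \eqref{eq:blockequiv2} and \eqref{eq:blockequiv1} produce $\tilde{c}_0,\tilde{c}_1$ as explicit algebraic functions of exactly these quantities, and the passage $\tilde{P}=PB_P$, $\tilde{R}=RB_R$ introduces only $\|B_P\|,\|B_P^{-1}\|,\|B_R\|,\|B_R^{-1}\|$, which are themselves controlled by the same change-of-basis bounds \eqref{eqn:cob}.

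Once this bookkeeping is in place, the corollary reduces to a one-line conclusion: apply Lemma \ref{lem:equivIP} at every level $\ell = 1,\ldots,L-1$, observe that each instance produces the same $c_0, c_1$ because every input quantity is level-independent by assumption, and combine with the trivial $A_{\ell+1} \sim_n (A_{\ell+1}^* A_{\ell+1})^{1/2}$ to obtain $R_\ell^*A_\ell P_\ell \sim_n P_\ell^* Q_\ell A_\ell P_\ell$ with uniform constants.

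The main obstacle, and really the only nontrivial point, is confirming that no hidden $n_\ell$-dependence sneaks in through the background singular value bound: in particular one must use the standing assumption that $\sigma_{n_{\ell+1}}^\ell \geq C_\sigma$ to keep $\max\{2,1/\sigma_{k+1}^\ell\}$ in \eqref{eqn:cob} bounded independently of $\ell$. Provided that is granted (it is built into the setup introduced just before Section \ref{sec:multigrid:equiv}), the corollary follows immediately.
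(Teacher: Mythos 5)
Your proof is correct and takes the same route the paper intends: the paper states this corollary without proof precisely because it follows directly from Lemma \ref{lem:equivIP} via the trivial identity $\|A_c\mathbf{x}\| = \|(A_c^*A_c)^{1/2}\mathbf{x}\|$ from \eqref{eq:equivIP1}, together with the observation that the constants $c_0,c_1$ inherit level-independence from the level-independent hypotheses. Your explicit bookkeeping of which quantities enter $c_0,c_1$, and in particular the role of the standing assumption $\sigma_{n_{\ell+1}}^\ell \geq C_\sigma$ in controlling the change-of-basis bound \eqref{eqn:cob}, is exactly the check one would want to make, even though the paper leaves it implicit.
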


With this line of proof, it is clear why $P$ must have at least a SAP/SSAP for inner-product equivalence, that is, $\beta \geq 1$. If not,
then $\|A\|$ and $\|C\|$ in \eqref{eq:blockequiv2} and \eqref{eq:blockequiv1} are not bounded independent of $\Sigma_1$. 
Also note that $R$ plays a minor role. Although stronger
approximation properties for $R$ (larger $\gamma$) improve the equivalence constants, $\gamma$ is only required by the
proof to satisfy $\beta+\gamma > 1$ and $\gamma > 0$. Of course, everything is made easier by choosing $R$ to be close to
$Q^*P$ and/or to share the same FAP power as $P$.

\begin{remark}
The same relation between $\sigma_k$ and the constraint on $s_1$ discussed in Section \ref{sec:2grid:stability} for stability
applies here as well. The definitions of $B$ in \eqref{eq:blockequiv2} and \eqref{eq:blockequiv1} are slightly different, but
satisfy the same properties. As $k$ is chosen smaller, 
$\delta_{PR}^2$ and $\hat{\delta}_{P}^2$ get smaller, which reduces the constraint on $s_1$. However, smaller $k$ also leads to
$W_2$ and $Z_2$ of larger dimensions, which likely makes $s_1$ smaller.
\end{remark}

\subsection{Multilevel convergence}\label{sec:multigrid:conv}

So far we have considered the relation between the orthogonal coarse-grid operator and coarse-grid operator used in practice.
To prove multilevel convergence, we will decompose error over the subspaces $\mathcal{R}(\Pi)$ and $\mathcal{R}(I-\Pi)$.
For an orthogonal projection, say $\widehat{\Pi}$ with respect to norm $\|\cdot\|$, $\|\mathbf{e}\|^2 =
\|(I - \widehat{\Pi})\mathbf{e}\|^2 + \|\widehat{\Pi}\mathbf{e}\|^2$. Because $\Pi$ as
used here is a non-orthogonal projection, this equality does not hold. However, bounds on the decomposition are closely related to
stability as proved in Section \ref{sec:2grid:stability}, and the angle between the subspaces $\mathcal{R}(\Pi)$ and
$\mathcal{R}(I-\Pi)$. 

From a given level in the AMG hierarchy, denote the coarse-grid matrix $A_c$, and define $Q_cA_c:=(A_c^*A_c)^{\frac{1}{2}}$,
where $Q_cA_c$ defines the norm we will consider on the coarse grid. Then, consider the difference between the exact projection,
$\Pi = PA_c^{-1}R^*A$, and the inexact projection, $\widetilde{\Pi} = PB_c^{-1}R^*A$, where $B_c^{-1}$ denotes the AMG cycle applied
to the coarse-grid problem. This corresponds to the recursive application of a multilevel AMG cycle. Assume $B_c^{-1}$ is convergent,
with bound
\begin{align*}
\|I - B_c^{-1}A_c\|_{Q_cA_c}^2 = \|(A_c^{-1} - B_c^{-1})A_c\|_{Q_cA_c}^2 < \rho_c,
\end{align*}
and let $G^\nu$ denote the error-propagation operator corresponding to
$\nu$ iterations of relaxation. Then, from \eqref{eq:equiv9} and Lemma \ref{lem:equivIP},
\begin{align*}
\|(\Pi - \widetilde{\Pi})G^\nu\mathbf{e}^{(i)}\|_{QA}^2 & = \left\|P(A_c^{-1}-B_c^{-1})A_c(A_c^{-1}R^*AG^\nu)\mathbf{e}^{(i)}\right\|_{QA}^2 \\
& \leq \frac{1}{c_0} \Big\|( A_c^{-1}-B_c^{-1})A_c(A_c^{-1}R^*AG^\nu)\mathbf{e}^{(i)}\Big\|_{Q_cA_c}^2 \\
& \leq \frac{\rho_c}{c_0} \left\|A_c^{-1}R^*AG^\nu\mathbf{e}^{(i)}\right\|_{Q_cA_c}^2 \\
& \leq \frac{c_1\rho_c}{c_0} \left\| PA_c^{-1}R^*AG^\nu\mathbf{e}^{(i)}\right\|_{QA}^2 \\
& = \frac{c_1\rho_c}{c_0}\|\Pi G^\nu\mathbf{e}^{(i)}\|_{QA}^2.
\end{align*}
Error is propagated via $\mathbf{e}^{(i+1)} = (I - \widetilde{\Pi}) G^\nu \mathbf{e}^{(i)}$, which can be expanded in norm as
\begin{align}
\begin{split}
\|\mathbf{e}^{(i+1)}\|_{QA}^2 & \leq \|(I - \Pi)G^\nu\mathbf{e}^{(i)}\|_{QA}^2 + 2\left\langle (I - \Pi)G^\nu\mathbf{e}^{(i)}, (\Pi - \widetilde{\Pi})G^\nu\mathbf{e}^{(i)}\right\rangle_{QA}
	\\&\hspace{6ex} + \|(\Pi - \widetilde{\Pi})G^\nu\mathbf{e}^{(i)}\|_{QA}^2.
\end{split}\label{eq:error_exp}
\end{align}
In order to bound the middle inner product, we introduce the following result connecting the angle between subspaces
of a Hilbert space, the norm of an oblique projection, and a strengthened Cauchy-Schwarz inequality. 
\begin{lemma}[Strengthened Cauchy Schwarz]\label{lem:sCS} 
Define the minimal canonical angle between $\mathcal{R}(\Pi)$ and $\mathcal{R}(I-\Pi)$ in the $QA$ inner product by 
\begin{align*}
\cos\left(\theta_{min}^{(\Pi)}\right) := \sup_{\substack{\mathbf{x}\in\mathcal{R}(\Pi), \|\mathbf{x}\|_{QA}=1, \\\mathbf{y}\in\mathcal{R}(I-\Pi),
	\|\mathbf{y}\|_{QA}=1}} |\langle \mathbf{x},\mathbf{y}\rangle_{QA} |.
\end{align*}
Then, 
$\|\Pi\|_{QA}  = \|I - \Pi\|_{QA} = \frac{1}{\sin\left(\theta_{min}^{(\Pi)}\right)}$, 
and, for all $\mathbf{x}\in\mathcal{R}(\Pi)$ and $\mathbf{y}\in\mathcal{R}(I - \Pi)$, 
$\Big|\langle \mathbf{x}, \mathbf{y}\rangle_{QA} \Big| \leq \cos\left(\theta_{min}^{(\Pi)}\right) \|\mathbf{x}\|_{QA}\|\mathbf{y}\|_{QA}.$
\end{lemma}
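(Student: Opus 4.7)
The plan is to work entirely inside the Hilbert space $(\mathbb{C}^n,\langle\cdot,\cdot\rangle_{QA})$, in which $\Pi$ is an oblique projection onto $\mathcal{R}(\Pi)$ along $\mathcal{R}(I-\Pi)$. Since these two subspaces form a (non-orthogonal) direct-sum decomposition of the whole space, every vector splits uniquely as $\mathbf{v} = \Pi\mathbf{v} + (I-\Pi)\mathbf{v}$. The key ingredient is the classical Kato identity $\|\Pi\| = \|I-\Pi\|$ for nontrivial projections, combined with an angle characterization.

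The strengthened Cauchy--Schwarz statement is essentially a restatement of the definition of $\theta_{min}^{(\Pi)}$. For arbitrary nonzero $\mathbf{x}\in\mathcal{R}(\Pi)$ and $\mathbf{y}\in\mathcal{R}(I-\Pi)$, I would normalize $\mathbf{x}/\|\mathbf{x}\|_{QA}$ and $\mathbf{y}/\|\mathbf{y}\|_{QA}$ to unit vectors and invoke the supremum in the definition of $\cos(\theta_{min}^{(\Pi)})$, then rescale by $\|\mathbf{x}\|_{QA}\|\mathbf{y}\|_{QA}$. This yields the inequality with no further work.

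For the identity $\|\Pi\|_{QA} = 1/\sin(\theta_{min}^{(\Pi)})$, I would decompose $\mathbf{v} = \Pi\mathbf{v} + (I-\Pi)\mathbf{v}$ and expand
\begin{equation*}
\|\mathbf{v}\|_{QA}^2 \;=\; \|\Pi\mathbf{v}\|_{QA}^2 + 2\langle \Pi\mathbf{v},(I-\Pi)\mathbf{v}\rangle_{QA} + \|(I-\Pi)\mathbf{v}\|_{QA}^2.
\end{equation*}
Applying the strengthened Cauchy--Schwarz to the cross term with the sign that gives the minimum, and writing $c=\cos(\theta_{min}^{(\Pi)})$, I get a quadratic lower bound in $t:=\|(I-\Pi)\mathbf{v}\|_{QA}$ with $\|\Pi\mathbf{v}\|_{QA}$ held fixed. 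Completing the square in $t$ yields
\begin{equation*}
\|\mathbf{v}\|_{QA}^2 \;\geq\; (1-c^2)\,\|\Pi\mathbf{v}\|_{QA}^2 \;=\; \sin^2(\theta_{min}^{(\Pi)})\,\|\Pi\mathbf{v}\|_{QA}^2,
\end{equation*}
which proves $\|\Pi\|_{QA} \leq 1/\sin(\theta_{min}^{(\Pi)})$. For equality, I would exhibit a near-extremal $\mathbf{v}$: take sequences of unit vectors $\mathbf{x}_n\in\mathcal{R}(\Pi)$, $\mathbf{y}_n\in\mathcal{R}(I-\Pi)$ with $|\langle \mathbf{x}_n,\mathbf{y}_n\rangle_{QA}|\to c$, and set $\mathbf{v}_n := \mathbf{x}_n - c\,\mathbf{y}_n$ (with a sign chosen to make the cross term negative). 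In finite dimensions the supremum is attained, so this gives equality.

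Finally, the symmetry $\|I-\Pi\|_{QA} = \|\Pi\|_{QA}$ follows because $I-\Pi$ is itself a projection, onto $\mathcal{R}(I-\Pi)$ along $\mathcal{R}(\Pi)$, and the minimal canonical angle between two subspaces is symmetric in its arguments; thus the same derivation applied to $I-\Pi$ yields the identical formula. The main obstacle I anticipate is the attainment step: the upper bound on $\|\Pi\|_{QA}$ is the straightforward completing-the-square argument, but one must be careful to realize the supremum to get equality rather than merely an inequality, which is where the finite-dimensional setting (so that the unit spheres of both subspaces are compact) is essential.
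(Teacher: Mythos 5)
The paper does not actually give a proof of this lemma---its ``proof'' is a one-line citation to Deutsch (1995) and Szyld (2006)---so there is no internal argument to compare against. Your proposal reconstructs the standard proof found in those references, and it is essentially correct. The strengthened Cauchy--Schwarz is indeed immediate from the definition of the supremum by rescaling. For the norm formula, your completing-the-square argument is sound: with $a=\|\Pi\mathbf{v}\|_{QA}$, $t=\|(I-\Pi)\mathbf{v}\|_{QA}$, $c=\cos(\theta_{min}^{(\Pi)})$, you get $\|\mathbf{v}\|_{QA}^2\geq a^2-2act+t^2=(t-ca)^2+(1-c^2)a^2\geq(1-c^2)a^2$, giving the upper bound $\|\Pi\|_{QA}\leq 1/\sin(\theta_{min}^{(\Pi)})$. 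Your near-extremal construction $\mathbf{v}=\mathbf{x}-c\,\mathbf{y}$ (with the supremum attained by compactness in finite dimensions) does realize equality: $\Pi\mathbf{v}=\mathbf{x}$, $\|\Pi\mathbf{v}\|_{QA}=1$, and $\|\mathbf{v}\|_{QA}^2=1-c^2$ once the cross term is made real and of the favorable sign. Finally, the symmetry $\|\Pi\|_{QA}=\|I-\Pi\|_{QA}$ does follow from the symmetry of the minimal canonical angle between $\mathcal{R}(\Pi)$ and $\mathcal{R}(I-\Pi)$ applied to the complementary projection.

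Two small points worth being explicit about if you wrote this up. First, since the paper works over $\mathbb{C}^n$, the cross term in the expansion of $\|\mathbf{v}\|_{QA}^2$ is $2\,\mathrm{Re}\,\langle\Pi\mathbf{v},(I-\Pi)\mathbf{v}\rangle_{QA}$, and in the attainment step you should multiply one of the unit vectors by a unimodular phase so that $\langle\mathbf{x},\mathbf{y}\rangle_{QA}$ is real; this costs nothing since the supremum is over $|\cdot|$. Second, the formula $1/\sin(\theta_{min}^{(\Pi)})$ is only meaningful when $\Pi\neq 0$ and $\Pi\neq I$, so that both ranges are nontrivial and $\theta_{min}^{(\Pi)}>0$; this is implicit in the AMG setting but should be stated. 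Neither is a gap in the idea, just a matter of care in the exposition.
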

\begin{proof}
See \cite{Deutsch:1995tc,Szyld:2006bg}.
\end{proof}

Applying Lemma \ref{lem:sCS} and an $\epsilon$-inequality with $\epsilon=1$ to \eqref{eq:error_exp} yields
\begin{align*}
\|\mathbf{e}^{(i+1)}\|_{QA}^2 & \leq \|(I - \Pi)G^\nu\mathbf{e}^{(i)}\|_{QA}^2 + \frac{c_1\rho_1}{c_0} \|\Pi G^\nu\mathbf{e}^{(i)}\|_{QA}^2 \\ &\hspace{6ex} +
	2\cos\left(\theta_{min}^{(\Pi)}\right)\|(I - \Pi)G^\nu\mathbf{e}^{(i)}\|_{QA}\sqrt{\frac{c_1\rho_c}{c_0}}\|\Pi G^\nu\mathbf{e}^{(i)}\|_{QA}  \\
& \leq \left( 1 + \cos^2\left(\theta_{min}^{(\Pi)}\right)\right)\|(I - \Pi)G^\nu\mathbf{e}^{(i)}\|_{QA}^2 + 
	2\frac{c_1\rho_c}{c_0}\|\Pi G^\nu\mathbf{e}^{(i)}\|_{QA}^2,
\end{align*}
for angle $\theta_{min}^{(\Pi)}$ between $\mathcal{R}({\Pi})$ and $\mathcal{R}(I - {\Pi})$.
Here, the first term corresponds to error that is not in the range of interpolation and must be attenuated by relaxation, while the
second term is the error that is in the range of interpolation, but has not been eliminated by the inexact coarse-grid
correction. Then, \tcb{using the last statement of Lemma \ref{lem:sCS}},
\begin{align}\label{eq:cpi}
C_\Pi\left(1 + \cos^2\left(\theta_{min}^{(\Pi)}\right)\right) = C_\Pi\left(2 - \sin^2\left(\theta_{min}^{(\Pi)}\right)\right) \leq 2C_\Pi - 1. 
\end{align}

{\color{black}
Let $G^\nu$ correspond to $\nu$ iteration of Richardson relaxation on the normal equations. By Corollary \ref{cor:2grid} and \eqref{eq:cpi},
\begin{align}\label{eq:cf-ML}
\|\mathbf{e}^{(i+1)}\|_{QA}^2 & \leq \rho_{\nu,\beta} \|\mathbf{e}^{(i)}\|_{QA}^2 + 
	\frac{2 c_1 C_\Pi \rho_c}{c_0}\|\mathbf{e}^{(i)}\|_{QA}^2,
\end{align}
where
\begin{align}\label{eq:rho_nu}
\rho_{\nu,\beta} &= \left(\frac{(2\beta-1)}{4\nu+(2\beta-1)}\right)^{(2\beta-1)/2} K_{P,\beta,1} (2 C_\Pi -1).
\end{align}
Assume the same constants hold on all levels and 
let $L$ designate the coarsest level in the hierarchy, where the coarse grid is solved exactly. Thus, $\rho_c= \rho_{_L} = 0$. On the next level,
using $G^\nu$, the convergence factor satisfies
\begin{align*}
\rho_{_{L-1}} &\leq  C_\mu \rho_{_L} + \rho_{\nu,\beta} = \rho_{\nu,\beta},
\end{align*}
where $C_\mu = 2 (c_1/c_0)C_\Pi.$
Thus, the AMG preconditioner corresponding to the inexact solve of level $L-1$ has convergence factor
$\|I - B_{L-1}^{-1}A_{L-1}\|_{Q_{L-1}A_{L-1}}^2 \leq \rho_{\nu,\beta}$.

Moving up the hierarchy, on level $L-2$, let  $\mu$ be the number of AMG cycles
applied as an inexact solve. Then, $\rho_c = \rho_{_{L-1}}^\mu$ in \eqref{eq:cf-ML} and
\begin{align}
\rho_{_{L-2}} &\leq C_\mu \rho_{_{L-1}}^\mu + \rho_{\nu,\beta}.
\end{align}
Thus, $\|I - B_{L-2}^{-1}A_{L-2}\|_{Q_{L-2}A_{L-2}}^2 \leq \rho_{_{L-2}}$. Given, $\beta$ and $\mu$ is there a value of $\nu$
for which this recursion is bounded? Since $C_\mu > 1.0$, $\mu = 1$ will not work. Assume $\mu = 2$, corresponding to a
W-cycle. If $\nu$ is chosen such that  $\rho_{\nu,\beta} \leq 1/(4C_\mu)$, then
\begin{align}\label{eq:cf-mu=2}
\rho_\ell &\leq 1/(2C_\mu)  < 1,
\end{align}
for all $\ell \leq L$. Appealing to \eqref{eq:rho_nu}, this is satisfied if
\begin{align}\label{eq:nu-ML}
\nu &\geq \left((2\beta-1)/4\right)\left( 4(c_1/c_0)K_{P} C_\Pi(2C_\Pi-1)\right)^{2/(2\beta-1)}.
\end{align}
For $\mu > 2$, a similar argument will yield a less stringent condition on $\nu$, which we omit.

From Corollary \ref{cor:2grid}, the constant in \eqref{eq:rho_nu} is $K_{P,\beta,1}$, which can be much smaller than $K_{P,\beta,0}$, 
as will be shown  numerically in Section \ref{sec:numerics}.
If $P$ satisfies a SAP ($(\beta,\eta) =  (1,1)$), then the number of relaxations grows like $O(K_{P,1,1}^2 C_\Pi^4)$. If
$P$ satisfies a FAP$(3/2,1)$, then the number of relaxations grows like $O(K_{P,3/2,1} C_\Pi^2)$. This emphasizes the 
goal of choosing $R$ and $P$ to increase $\beta$ and reduce $C_\mu$ and $K_{P,\beta,\eta}$. 

The discussion above is summarized in the following theorem, where $W$-cycle 
convergence is established. Proof for $\mu$-cycle would follow similarly.

\begin{theorem}[$W$-cycle Convergence]\label{th:W-cycle}
Consider an AMG hierarchy with $L$ levels, and assume the conditions for Lemma \ref{lem:equivIP} hold on each level.
Let the constants, including $c_1, K_{P}$, and $C_\Pi$ denote the maximum corresponding values over
all levels in the hierarchy, and $c_0$ the minimum value over all levels. Set $C_\mu =  2(c_1/c_0) K_{P} C_\Pi$
and choose
\begin{align}\label{eq:nu-W}
\nu &\geq \left((2\beta-1)/4\right)\left( 4(c_1/c_0)K_{P} C_\Pi(2C_\Pi-1)\right)^{2/(2\beta-1)}.
\end{align}
Then, $W$-cycle convergence factor is bounded by
\begin{align}\label{eq:cf-W}
\rho &\leq 1/(2C_\mu)  < 1.
\end{align}
\end{theorem}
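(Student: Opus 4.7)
The plan is to use induction on the grid level, working from the coarsest level up toward the finest, using the two-grid-style recursion \eqref{eq:cf-ML} that the discussion preceding the theorem has already established for each level. All the analytic work (the strengthened Cauchy–Schwarz argument, the equivalence of inner products through Lemma \ref{lem:equivIP}, and the relaxation bound of Corollary \ref{cor:2grid}) has been done; the theorem is essentially the book-keeping step that turns the per-level recursion into a uniform bound valid on every level, provided enough smoothing steps are applied.

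Concretely, I would set $\rho_\ell := \|I - B_\ell^{-1}A_\ell\|_{Q_\ell A_\ell}^2$ and take the base case $\rho_L = 0$, since the coarsest problem is solved exactly. For the inductive step, on level $\ell < L$ the inexact coarse solve is $\mu = 2$ applications of the level-$(\ell+1)$ cycle, so $\rho_c = \rho_{\ell+1}^2$ in \eqref{eq:cf-ML}. Inserting this, together with the bound \eqref{eq:rho_nu} on $\rho_{\nu,\beta}$, yields
\begin{equation*}
\rho_\ell \;\leq\; C_\mu\,\rho_{\ell+1}^{\,2} \;+\; \rho_{\nu,\beta},
\end{equation*}
where all constants are replaced by their worst-level values (max of $c_1, K_P, C_\Pi$, min of $c_0$), which is legitimate because the hypotheses of Lemma \ref{lem:equivIP} hold uniformly in $\ell$.

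Next, I would choose $\nu$ according to \eqref{eq:nu-W} precisely so that $\rho_{\nu,\beta} \leq 1/(4C_\mu)$, which is a direct algebraic inversion of \eqref{eq:rho_nu} once the $(2C_\Pi - 1)$ factor from \eqref{eq:cpi} is absorbed into $C_\mu$. The induction hypothesis $\rho_{\ell+1} \leq 1/(2C_\mu)$ then gives
\begin{equation*}
\rho_\ell \;\leq\; C_\mu\cdot\frac{1}{4C_\mu^2} + \frac{1}{4C_\mu} \;=\; \frac{1}{4C_\mu} + \frac{1}{4C_\mu} \;=\; \frac{1}{2C_\mu},
\end{equation*}
which closes the induction. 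Since $C_\mu > 1$, the bound $1/(2C_\mu) < 1$ establishes contraction on every level, and in particular on level $1$, proving $W$-cycle convergence.

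The routine parts are the recursion assembly and the geometric-series-style induction. The main conceptual obstacle — why $\mu = 2$ is essential — was already flagged in the discussion: for $\mu = 1$ a linear recursion $\rho_\ell \leq C_\mu \rho_{\ell+1} + \rho_{\nu,\beta}$ diverges because $C_\mu > 1$, whereas the quadratic recursion for $\mu = 2$ has a stable fixed point at $O(1/C_\mu)$. The only genuine care needed is making sure the per-level constants really can be bounded by their worst-level values uniformly in $\ell$; this is where the hypothesis that the Lemma \ref{lem:equivIP} assumptions hold independently of grid level does the real work, and it would be worth flagging in the proof that $C_\sigma \sim O(1)$ on every level is what prevents the chosen splitting index $k_\ell$ from degenerating as one coarsens.
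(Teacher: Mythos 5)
Your proof is correct and takes essentially the same approach as the paper: the paper's own proof is literally ``the discussion above,'' which establishes the per-level recursion \eqref{eq:cf-ML}, sets $\rho_L=0$, uses $\mu=2$ to get $\rho_\ell \le C_\mu\rho_{\ell+1}^2 + \rho_{\nu,\beta}$, and chooses $\nu$ so that $\rho_{\nu,\beta}\le 1/(4C_\mu)$, yielding the invariant $\rho_\ell\le 1/(2C_\mu)$ by the same fixed-point/induction argument you write out explicitly. The only slight imprecision in your description is the claim that the $(2C_\Pi-1)$ factor is ``absorbed into $C_\mu$'' --- in the paper it remains explicit inside the $\nu$-bound \eqref{eq:nu-W} rather than inside $C_\mu$ --- but this is a notational point that does not affect the argument.
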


\begin{proof}
The proof follows from the discussion above.
\end{proof}

Theorem \ref{th:W-cycle} proves the existence of a convergent, $W$-cycle, with convergence independent of the problem
size and number of levels in the hierarchy. A $W$-cycle is scalable as long as the coarsening ratio, defined to be the ratio
of the DOFs in the coarse grid divided by the DOFs on the fine grid, is less than $1/2$. This is important for application to 
hyperbolic problems, for which a coarsening ratio of approximately $1/2$ is expected \cite{air1}. The same approach could be used to prove 
convergence of  $\mu$-cycle with $\mu> 2$, which could be accomplished with a smaller number
of relaxations, $\nu$, but would only be scalable with more aggressive coarsening. 

}

\section{Numerical results} \label{sec:numerics}
{\color{black}

This section evaluates the norm of projections and approximation property constants for two highly nonsymmetric
discretizations of the two-dimensional linear steady state advection problem,
\begin{align}\label{eq:transport}
\begin{split}
\mathbf{b}(x,y) \cdot\nabla u & = q(x,y) \hspace{3ex}\Omega, \\
u & = g(x,y) \hspace{3ex}\Gamma_{\textnormal{in}},
\end{split}
\end{align}
for domain, $\Omega \in\mathbb{R}^{2}$, and inflow boundary $\Gamma_{\textnormal{in}}$. A scalar PDE is chosen
to avoid complications that arise from satisfying approximation properties for systems of PDEs, and a purely advective
problem is chosen so that the resulting discretizations are highly nonsymmetric, independent of mesh spacing, $h$
(whereas advection-diffusion, for example, becomes increasingly symmetric as $h\to 0$). 

The domain $\Omega = [0,1]\times[0,1]$ is discretized using an unstructured triangular mesh, and the velocity field
given by a constant direction $\mathbf{b}(x,y) = ( \cos(\theta),\sin(\theta))$, where $\theta = 3\pi/16$. Inflow
boundary conditions are imposed on the south and west boundaries with $g =1$. Equation \eqref{eq:transport} is discretized
using upwind discontinuous Galerkin (DG) \cite{Brezzi:2004hf} and streamline upwind Petrov-Galerkin (SUPG) \cite{Brooks:1982bl}
discretizations. The resulting matrices are then scaled by the (block) diagonal to approximately account for relaxation
before considering the approximation properties. 
Similar results have been obtained for various curved velocity fields as well as including a reaction term, but
here we focus on the simpler case of constant advection. For numerical tests, relatively small spatial domains are
considered, $20\times 20$ for DG and $50\times 50$ for SUPG, each leading to about 3000 total DOFs, which
is necessary to directly evaluate the projections and approximation properties. 

Two methods are considered for computing transfer operators, a classical AMG interpolation operator \cite{Ruge:1987vg},
which is widely used and known to be effective for many scalar elliptic problems, and a restriction operator based on
a local approximate ideal restriction, $\ell$AIR \cite{air2}. 
Recently, the $\ell$AIR restriction was shown to be effective on highly
nonsymmetric matrices when coupled with relatively simple interpolation operators. 
In particular, the linear advection and transport equations were examined in \cite{air1,air2}. In \cite{air1}, a
reduction-based framework for convergence of NS-AMG is developed to explain the strong convergence obtained
using $\ell$AIR on hyperbolic-type problems. However, here we see that, in fact, $\ell$AIR also has good approximation
properties. Results here also consider classical AMG interpolation used as a restriction operator, $R = P$,
as occurs when using a Galerkin coarse grid, and an equivalent $\ell$AIR-like algorithm {on $A^*$}  to approximate 
the ideal interpolation operator, referred to as a local
approximate ideal prolongation ($\ell$AIP). 
\tcbb{
Figure \ref{fig:waps} shows the WAP (FAP$(1/2,0)$), SAP (FAP$(1,1)$)
and SSAP (FAP$(1,0)$) approximation constants for each individual (left/right) singular vector of $A$. Horizontal 
lines indicate the approximation constant that holds for all vectors.
}

\tcb{
Of interest is the behavior of the constant associated with individual singular vectors as the corresponding singular value
becomes small. If the constant remains bounded (flat or decreasing with decreasing singular value), this suggests the
particular approximation property holds independent of
problem size. If the constant spikes, it is an indication that the  property likely does not hold independent of problem size. If they tend
toward zero, it suggests a higher approximation property might also hold.
Recall that Lemma \ref{lem:approxprop} proves that a SAP implies a SSAP with constant squared. This behavior is demonstrated 
by the much larger values for the SSAP than for the SAP.
}

\begin{figure}[!ht]
  \centering
    \begin{subfigure}[b]{\textwidth}
      \begin{subfigure}[b]{0.475\textwidth}
        \includegraphics[width=\textwidth]{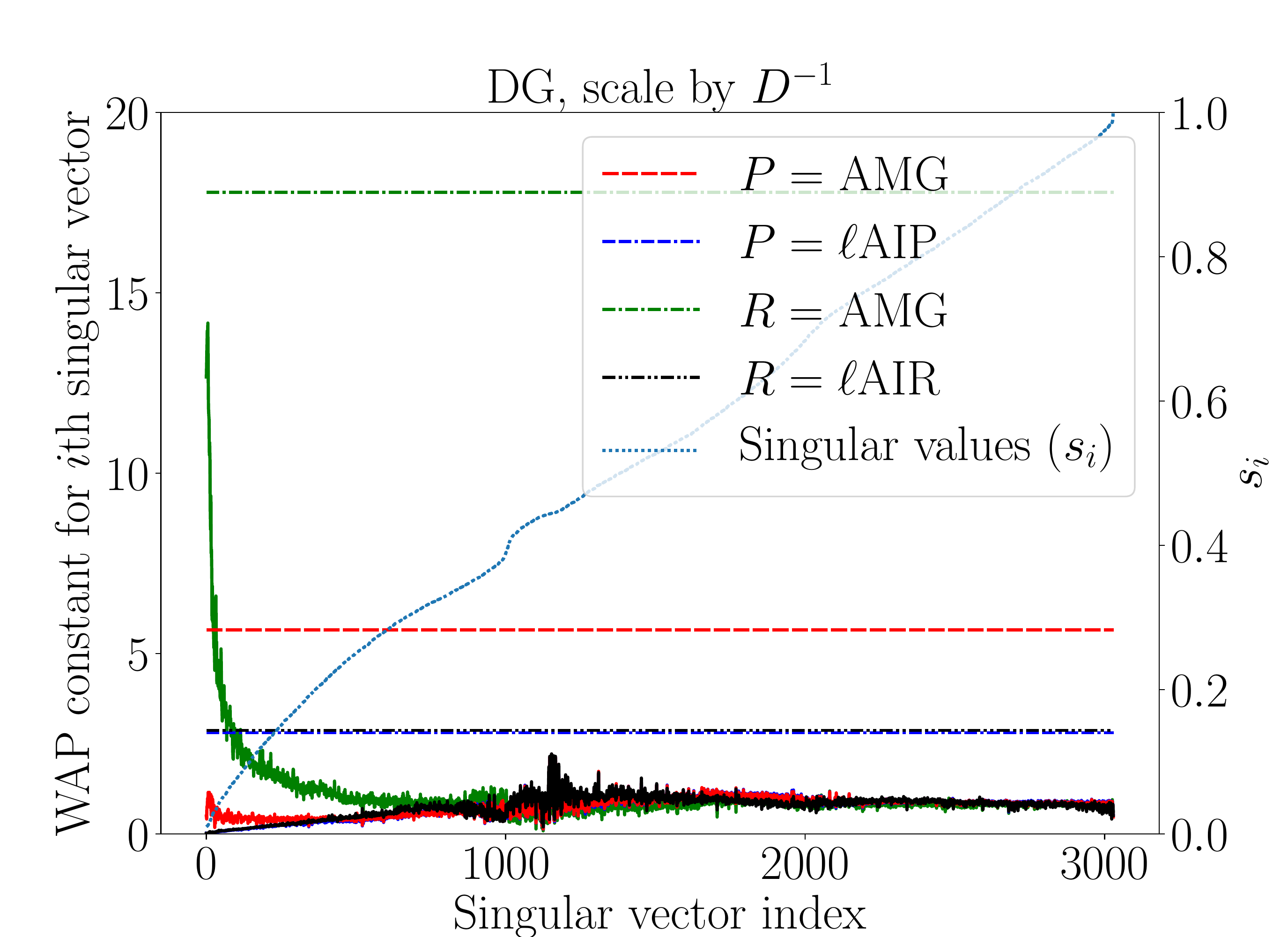}
      \end{subfigure}
      \hfill
       \begin{subfigure}[b]{0.475\textwidth}
        \includegraphics[width=\textwidth]{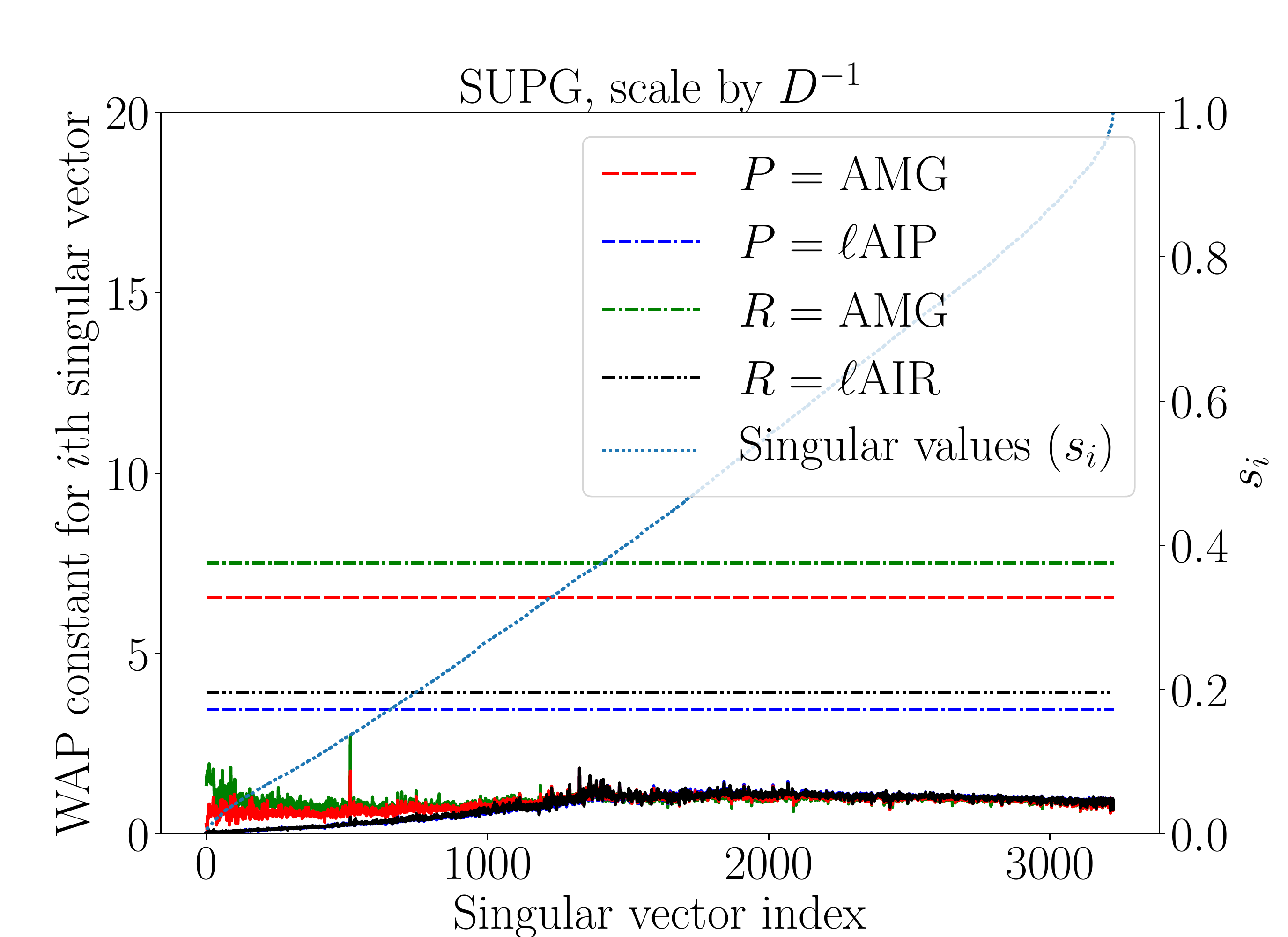}
      \end{subfigure}
  \label{fig:inset}
  \end{subfigure}
  \\
    \begin{subfigure}[b]{\textwidth}
      \begin{subfigure}[b]{0.475\textwidth}
        \includegraphics[width=\textwidth]{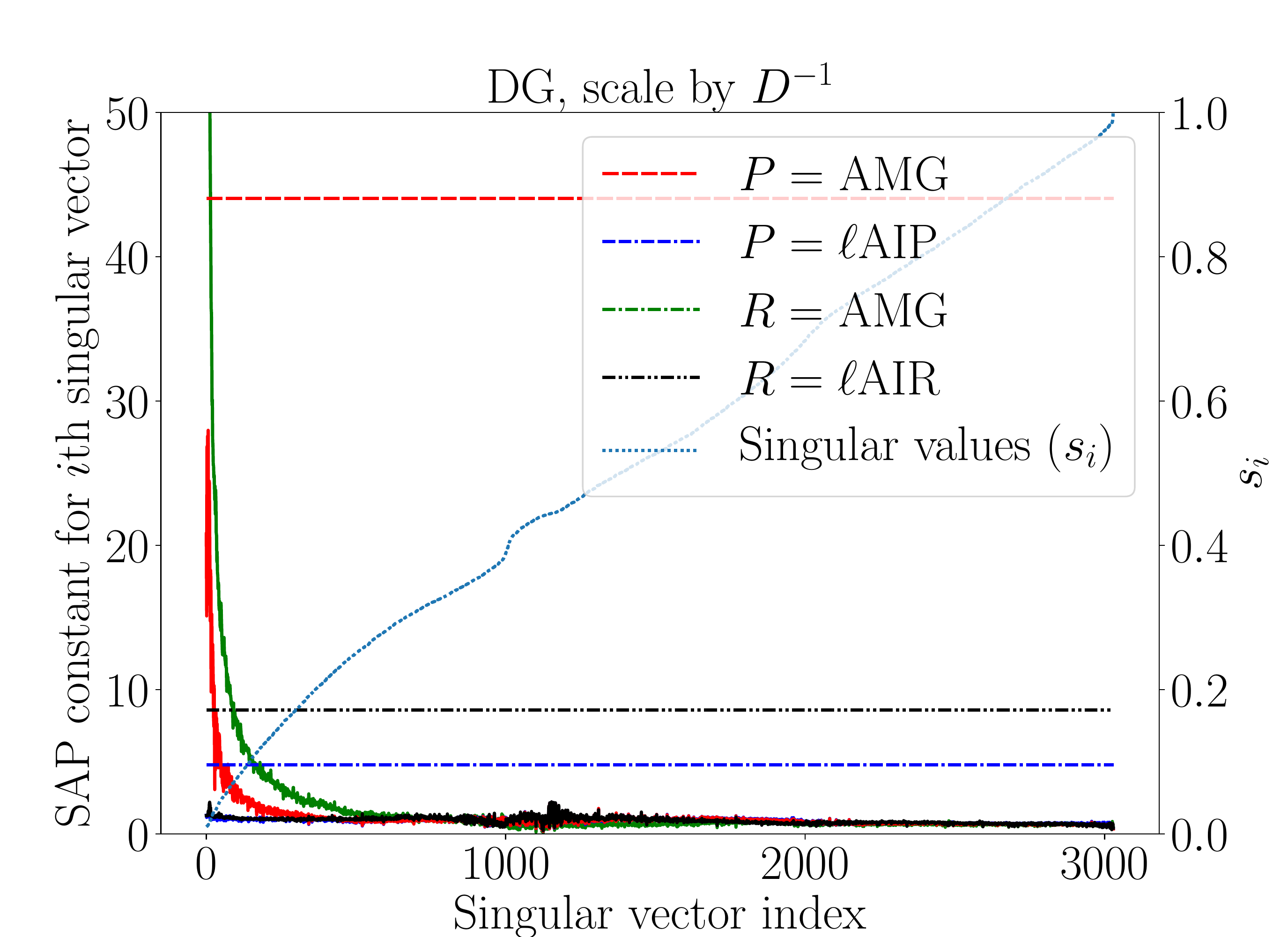}
      \end{subfigure}
      \hfill
       \begin{subfigure}[b]{0.475\textwidth}
        \includegraphics[width=\textwidth]{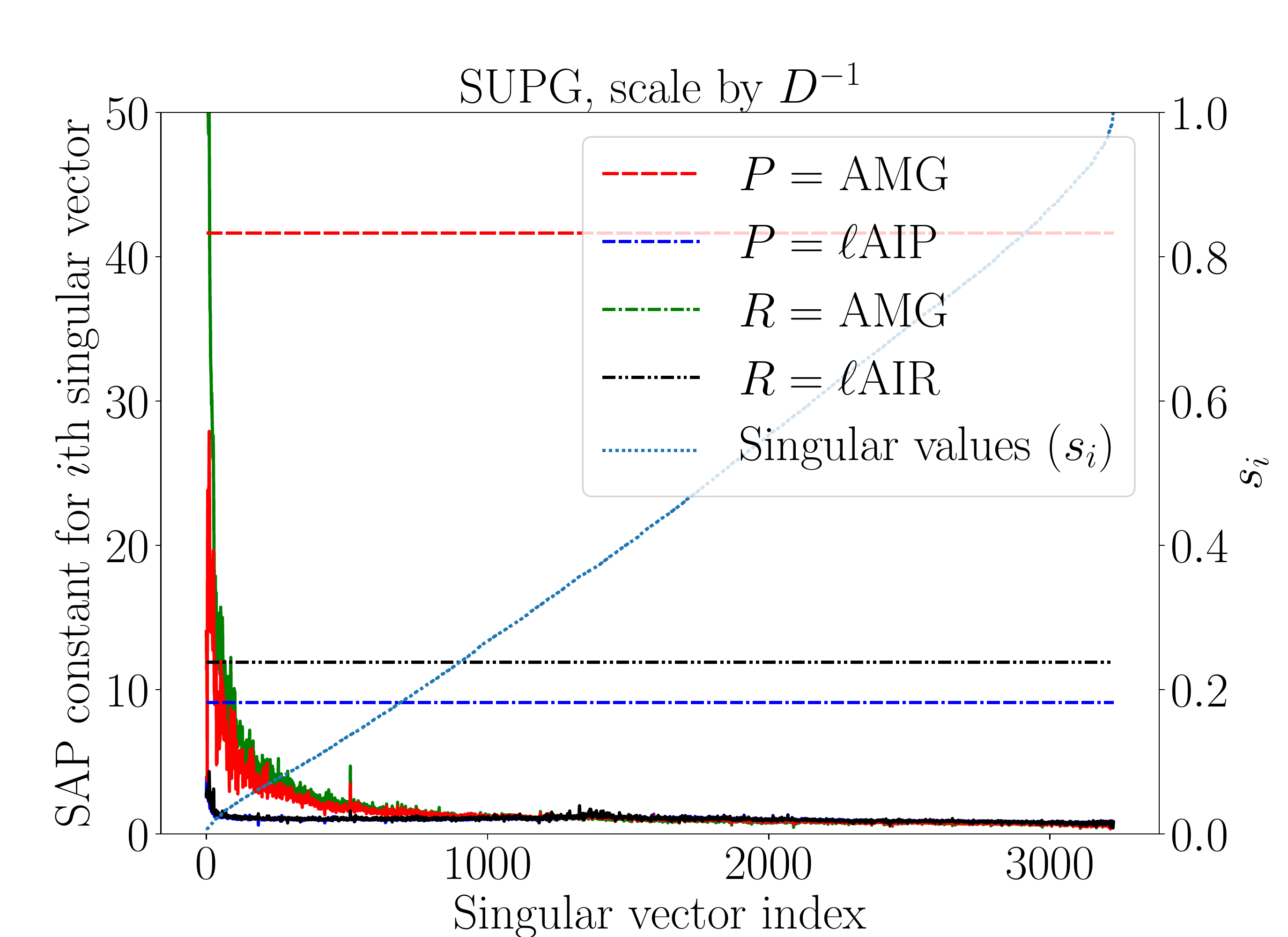}
      \end{subfigure}
  \end{subfigure}
  \\
      \begin{subfigure}[b]{\textwidth}
      \begin{subfigure}[b]{0.475\textwidth}
        \includegraphics[width=\textwidth]{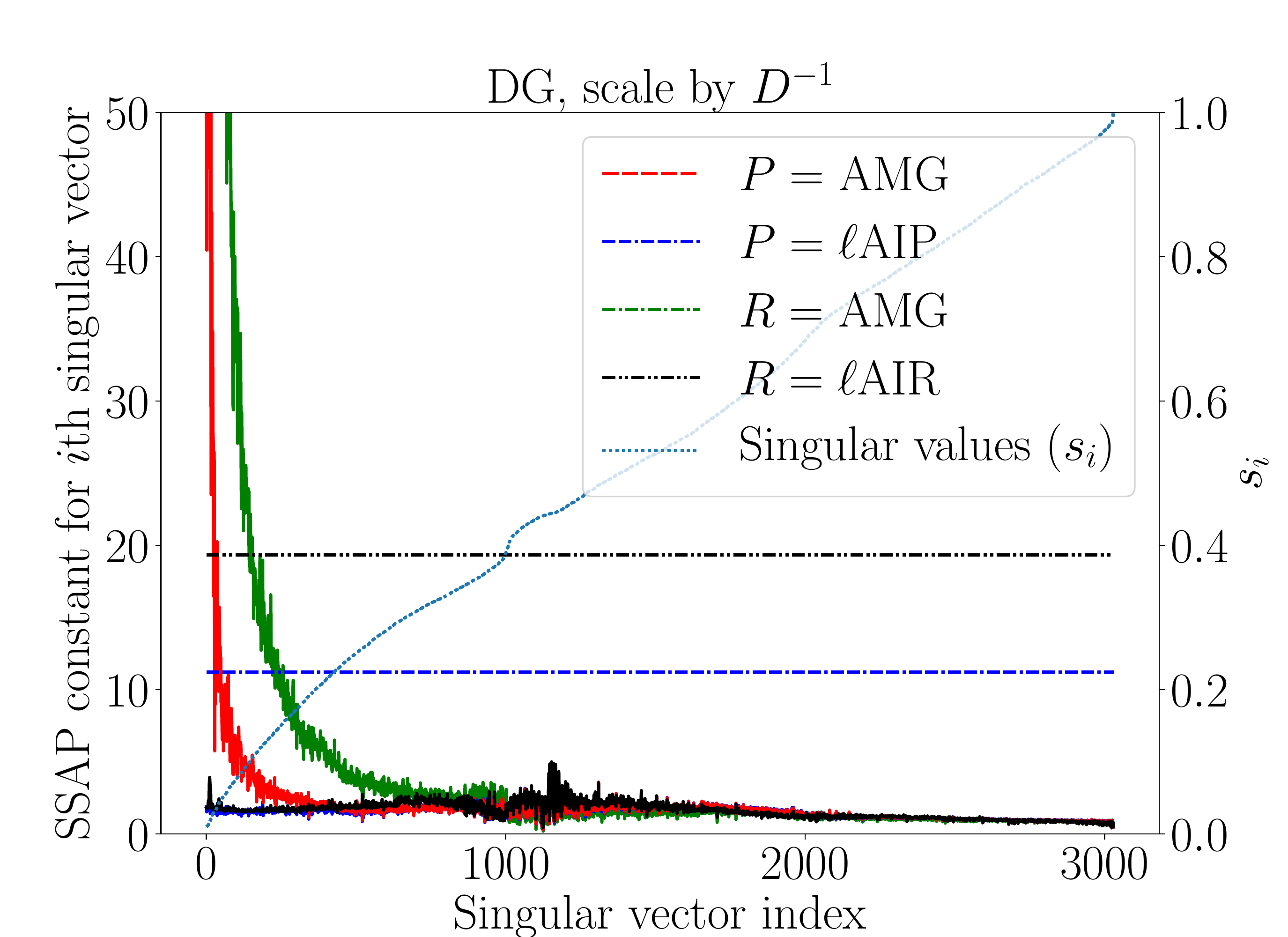}
      \end{subfigure}
      \hfill
       \begin{subfigure}[b]{0.475\textwidth}
        \includegraphics[width=\textwidth]{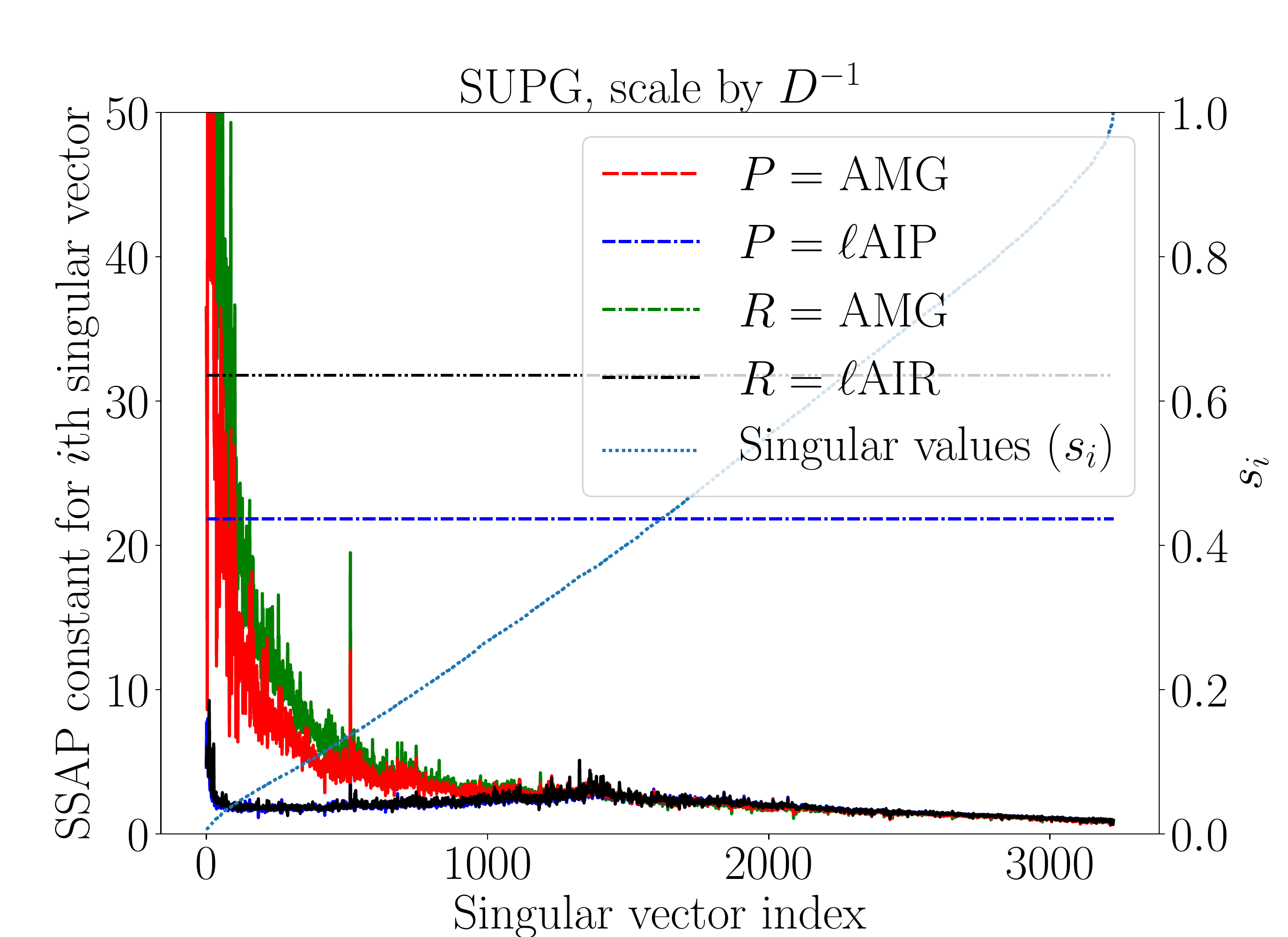}
      \end{subfigure}
  \end{subfigure}
    \caption{\tcb{WAP, SAP, and SSAP constants for classical AMG and $\ell$AIP as interpolation operators,
    and classical AMG and $\ell$AIR as restriction operators. Singular values of $A$ are
    shown in dotted blue on the right axis, and solid lines show the approximation constant for each respective
    singular vector (right singular vector for interpolation, left for restriction). Horizontal dot/dashed lines of the corresponding
    color show the approximation property constant that holds for all vectors. {The axes were kept small to better focus on
    the best values.  The values that fall off the figure are omitted, but listed here.} The SAP for $R = P=$ AMG
    is 89 for DG and 68 for SUPG. Similarly, for DG and SUPG, respectively, the SSAPs for $P=$ AMG are 204 and 157,
    and SSAPs for $R = P = $ AMG are 1390 and 257.}
   }
  \label{fig:waps}
  \end{figure}

There are a number of interesting things to note from Figure \ref{fig:waps}:
\begin{itemize}
\item Classical AMG, {indicated in red and} known to be effective on scalar elliptic PDEs, is not a good interpolation operator
for these problems. Although it may have a WAP for DG and SUPG,
it clearly does not satisfy the stronger approximation properties, indicated by the spike in the constant for small singular values.

\item Using classical AMG interpolation as a restriction operator, $R = P$ (green), acts as an even worse restriction 
operator, exposing one of the difficulties of Galerkin-based AMG on highly nonsymmetric problems. In the single instance
where the corresponding WAP constant is only moderate in size (top right), the constant is still likely to increase
as $h\to 0$ because the least accurate approximation of singular vectors is on those with small singular values.

\item $\ell$AIR {(black)}, in addition to having good reduction-type properties as shown in \cite{air1}, also has good
approximation properties. Indeed, for DG,  $\ell$AIR appears to have a WAP, SAP, and thus, a SSAP, with
fairly small constants that are independent of problem size. For SUPG, the SAP and SSAP constants rise slightly for very small 
singular values. This leaves the exact approximation properties of $\ell$AIR on SUPG in question.

\item Interestingly, the interpolation method referred to as $\ell$AIP {(blue)} also has very good approximation properties, better
than all other grid-transfer operators tested here.
The algorithm  described in \cite{air1} in which $\ell$AIR is paired with a simple interpolation, was shown to converge well for highly nonsymmetric problems.
However, theory suggests that  a good restriction operator alone will not be sufficient for
scalable convergence in that context. Results here indicate that commonly used interpolation methods may not be 
as accurate as $\ell$AIP.  This suggests that $\ell$AIR paired with $\ell$AIP may provide a robust and scalable method
for this class of nonsymmetric systems.
This is a topic of current research
\end{itemize}

In addition to approximation properties, stability of coarse-grid correction is important for scalable
convergence. Figure \ref{fig:projs} plots the $\ell^2$- and $QA$-norms for various coarse-grid corrections,
including the Galerkin case ($R = P$), the Petrov-Galerkin case ($R\neq P$), and the orthogonal projection
in each respective norm. The $\ell^2$-norm is considered because $\ell$AIR approximates the ideal restriction
operator, which is ideal in a certain sense in the $\ell^2$-norm \cite{air1,air2}. Similar to Figure \ref{fig:waps},
the norm is plotted as a function of every right singular vector, with a horizontal line of the same color 
giving the full operator norm. 

\begin{figure}[!h]
  \centering
    \begin{subfigure}[b]{\textwidth}
      \begin{subfigure}[b]{0.475\textwidth}
        \includegraphics[width=\textwidth]{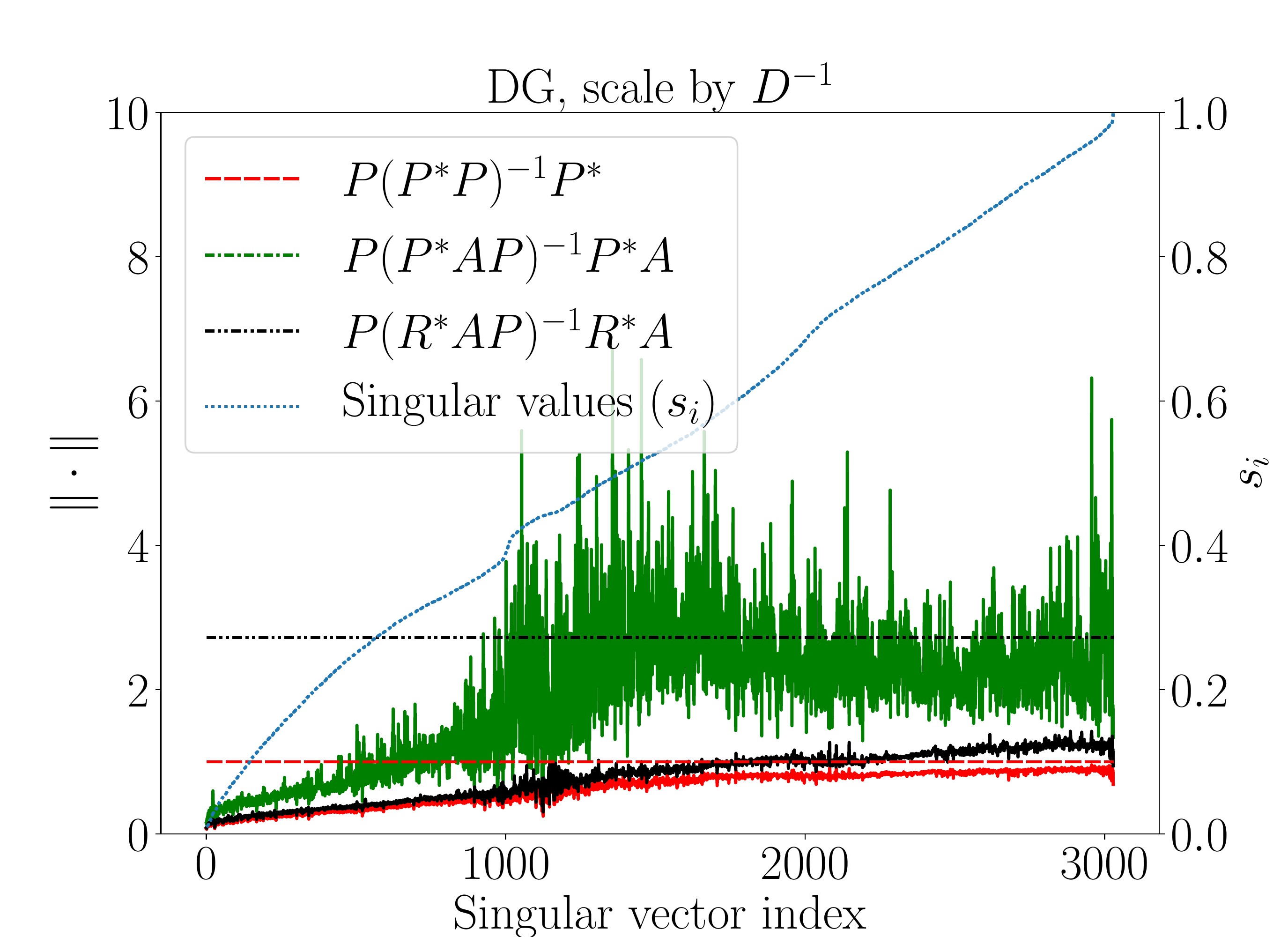}
      \end{subfigure}
      \hfill
       \begin{subfigure}[b]{0.475\textwidth}
        \includegraphics[width=\textwidth]{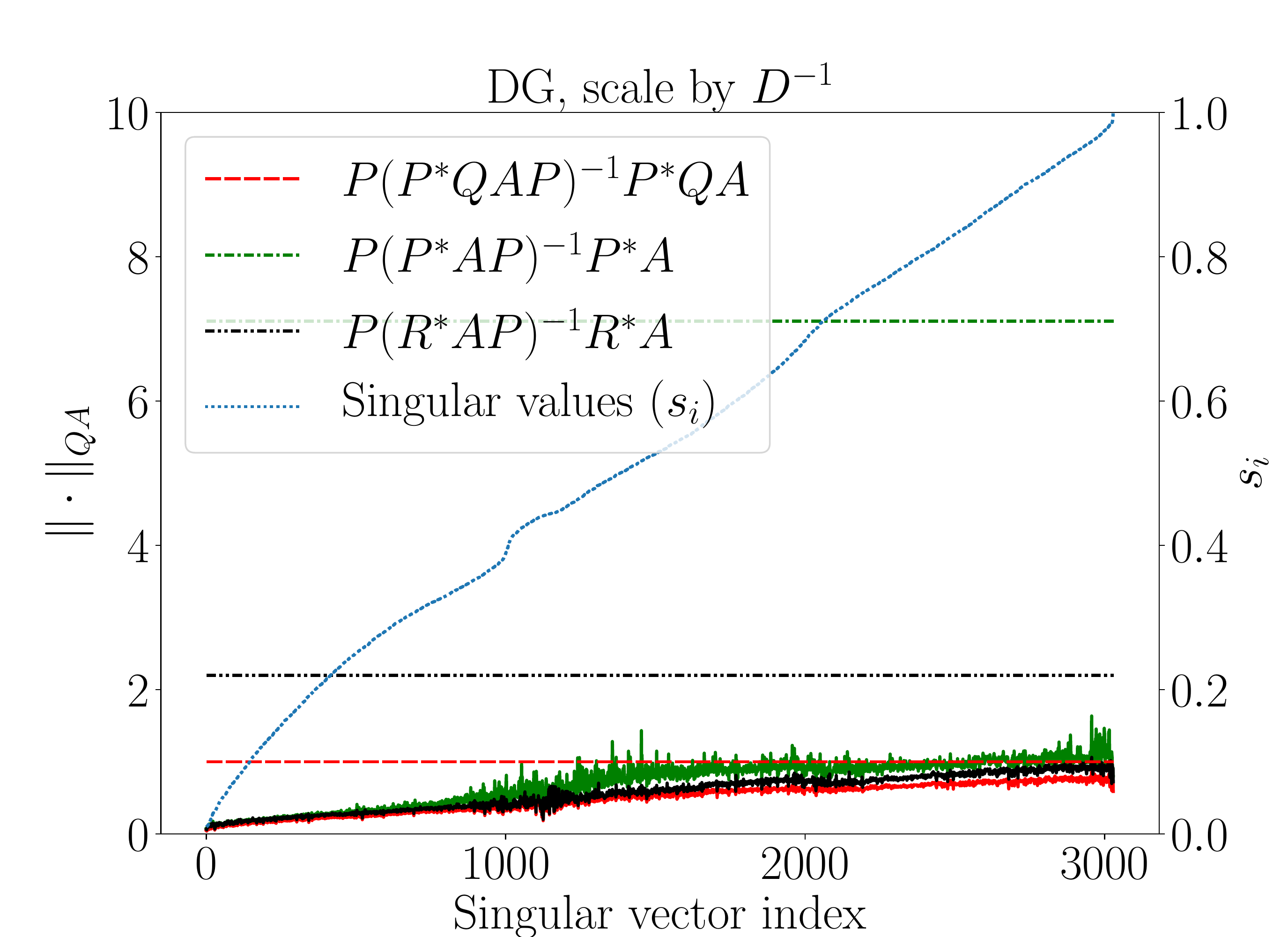}
      \end{subfigure}
  \end{subfigure}
  \\
    \begin{subfigure}[b]{\textwidth}
      \begin{subfigure}[b]{0.475\textwidth}
        \includegraphics[width=\textwidth]{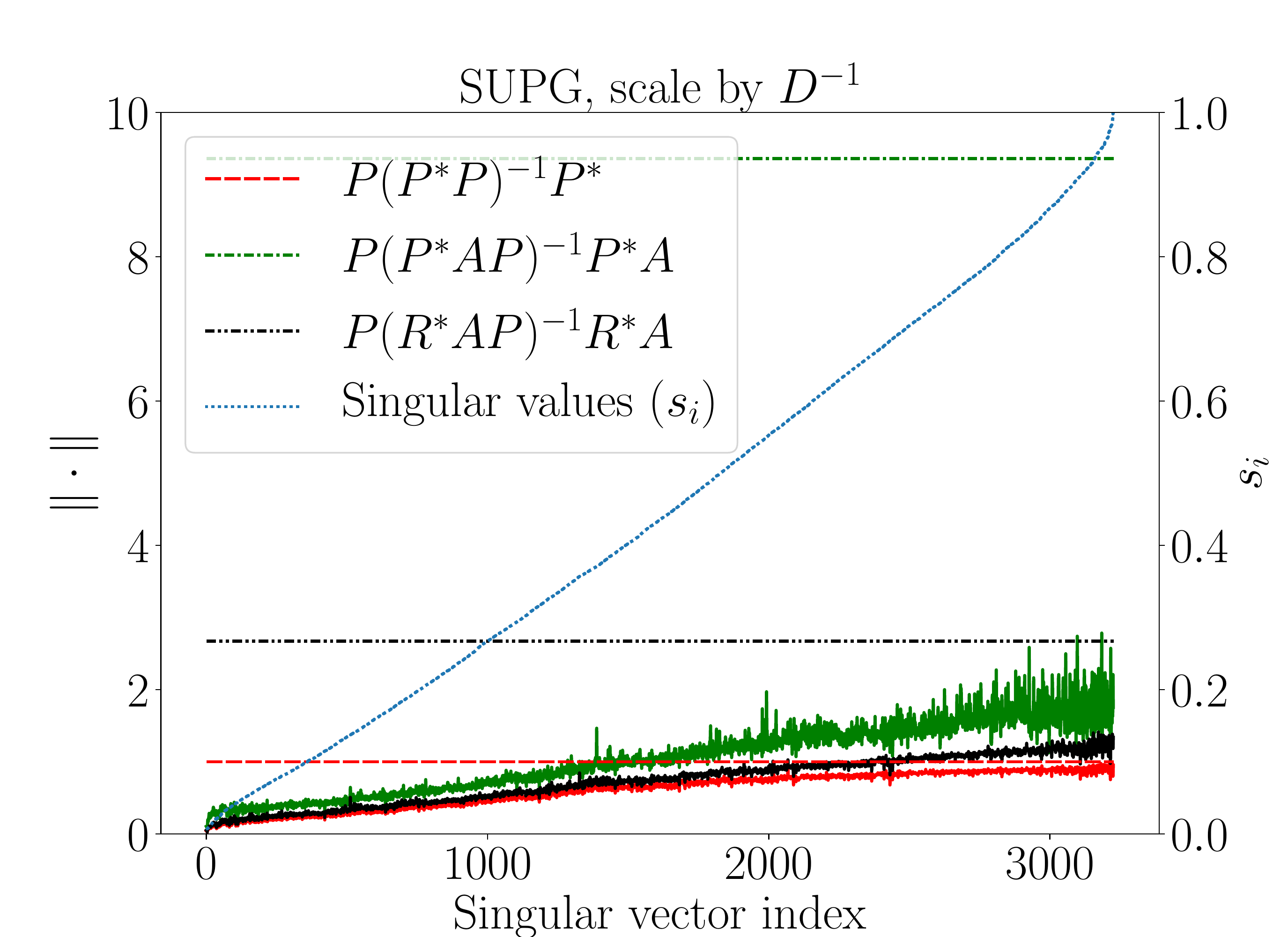}
      \end{subfigure}
      \hfill
       \begin{subfigure}[b]{0.475\textwidth}
        \includegraphics[width=\textwidth]{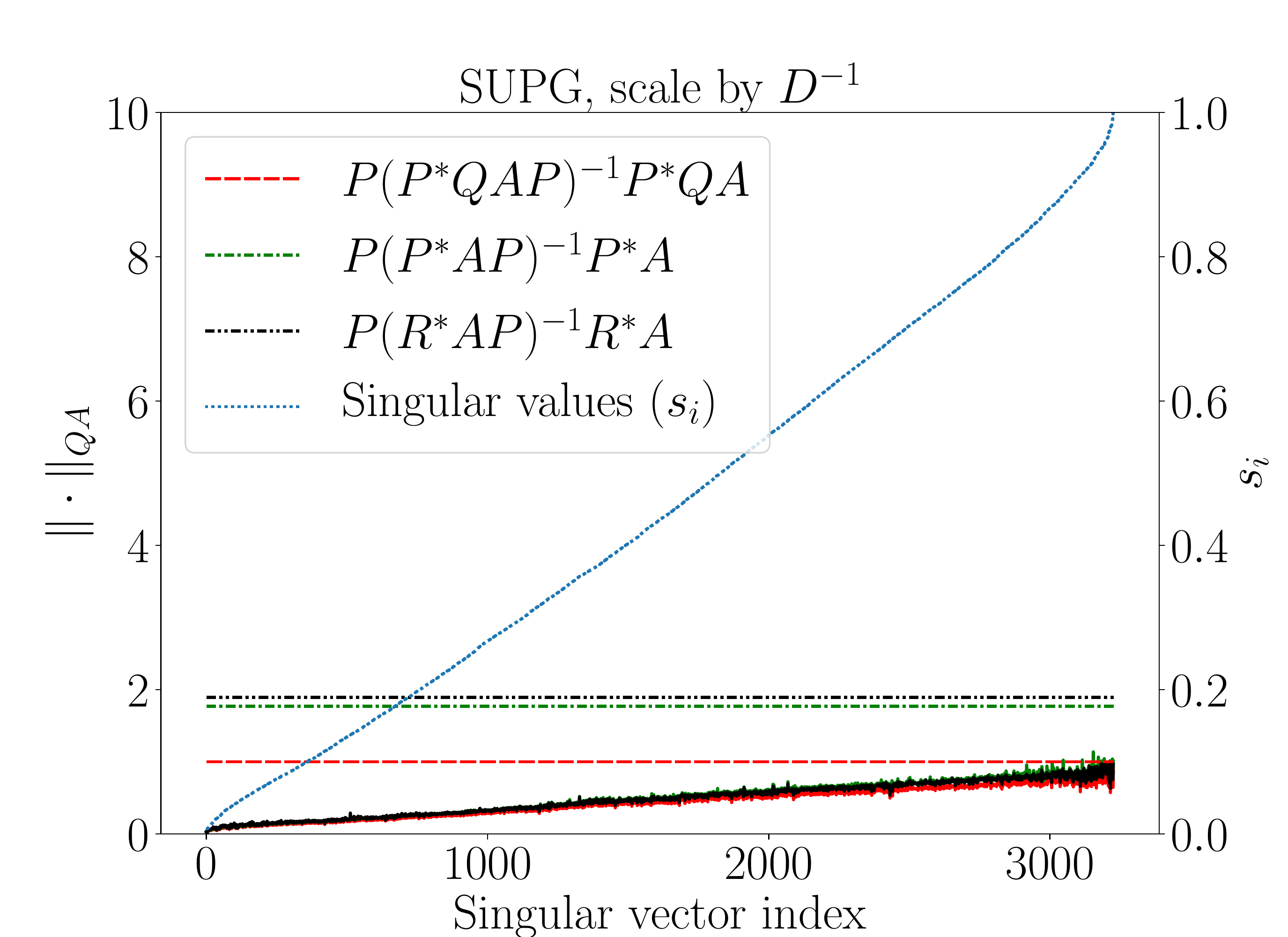}
      \end{subfigure}
  \end{subfigure}
    \caption{\tcb{$\ell^2$-norm (left) and $QA$-norm (right) of various projection operators, with $P$ given by classical
    AMG interpolation and $R$ given by $\ell$AIR. Singular values of $A$ are
    shown in dotted blue on the right axis, and solid lines show the norm of each projection applied to each respective
    right singular vector. Horizontal dot/dashed lines of the corresponding color show the actual norm of the projection. 
    The orthogonal (in the appropriate norm) projection onto the range of $P$ is shown in red 
    (and takes the value $1.0$), a Galerkin ($R = P$)
    projection is shown in green, and a Petrov-Galerkin ($R\neq P$) is shown in black. In the upper left plot, 
    $\|P(P^*AP)^{-1}P^*A\| \approx 30$ and is not shown.}
     }
  \label{fig:projs}
  \end{figure}

In all cases, the Petrov-Galerkin coarse-grid correction based on classical AMG interpolation and $\ell$AIR restriction
is nicely bounded in norm between 2--3. This further supports the Petrov-Galerkin approach over a Galerkin
coarse-grid correction, where, in three of the four cases here, the Galerkin projection is significantly larger in
norm. It is also important to note that the singular vectors which are most amplified by coarse-grid correction
(that is, contribute to the norm $> 1$) are those with medium to large singular values. As discussed previously,
it is imperative that $R$ and $P$ have a similar action on corresponding left and right singular vectors, \textit{including
large ones.} Figure \ref{fig:projs} shows that for these discretizations, it is indeed these larger singular vectors
that lead to the non-orthogonality of coarse-grid correction. 
}

\section{Discussion} \label{sec:conclusion}
\tcb{In this paper}, conditions have been established on $R$ and $P$ for two-grid and $W$-cycle multigrid convergence of 
NS-AMG in the $\sqrt{A^*A}$-norm.
Results indicate that it is not enough for $R$ and $P$ to include low-energy left and right singular vectors in their range (classical
approximation-property-based AMG approach). For a stable coarse-grid correction, the action of $R$ and $P$ must also lead to a
non-singular (and reasonably conditioned) coarse-grid operator. Sufficient conditions for this are that $R$ and $P$ accurately interpolate
singular vectors associated with small singular values, and, additionally, $R$ and $P$ have a similar action
on \textit{all} left and right singular vectors, including those associated with large singular values. 
\tcbb{An interesting open question
is the development of practical criteria that guarantee this condition.}

Furthermore, multilevel convergence of NS-AMG may require
additional iterations of relaxation or multigrid cycles on coarser levels of the hierarchy to converge,
\tcbb{depending on the strength of the approximation properties of $R$ and $P$. 
However, Theorem \ref{th:W-cycle}
indicates that, with the appropriate AMG cycle, scalable $W$-cycle convergence with respect to the number of levels in the hierarchy and problem
size is possible if the coarsening ratio is less than $1/2$.}

\tcbb{
Taking a closer look at the conditions leading up to two-level and multilevel convergence, choosing $R$ and $P$ to have stronger 
approximation properties, that is smaller constants $K_{R}$ and $K_{P}$ and larger powers $\gamma$ and $\beta$, and choosing $R$
closer to $Q^*P$, reduces the size of the stability constant $C_\Pi$ in Theorem \ref{th:stability}. This is displayed explicitly in
\eqref{eqn:Cbound} and the following discussion. Similarly, the ratio of the constants, $c_1/c_0$, relating the inner products in Section \ref{sec:multigrid:equiv} 
becomes closer to $1.0$. This, in turn, reduces the number of relaxation iterations required by \eqref{eq:nu-ML} to guarantee
$W$-cycle convergence. In the limit as $R=Q^*P$ with $\beta = 1$, the requirement becomes $\nu \geq 4K_{P}^2$. With
$\beta = 3/2$, the requirement is $\nu \geq 2K_{P}$. Appealing to Corollary \ref{cor:2grid}, in this context $K_{P} = K_{P,1,1}$.
In Section \ref{sec:numerics}, Figure 1 demonstrates that for two commonly used discretizations 
and several choices for $R$ and $P$, the SAP constants are not exceedingly large. However, the sufficient conditions derived here still require
a large number of relaxation steps. This is, in part, due to the choice of Richardson's method on $A^*A$ for relaxation. 
This choice facilitates the analysis, but forces stricter constraints than necessary 
and is probably not the best choice in practice. 
Using a similar W-cycle proof for SPD systems and Richardson's method on $A$ yields the constraint $\nu \geq 2K_P$ for $\beta = 1$.
An open question is an analysis that involves a more practical relaxation and yields less demanding sufficient conditions.
}

\tcb{To illustrate that the conditions developed here may not be necessary for NS-AMG convergence, consider the recently developed 
reduction-based method described in \cite{air1}, where sufficient conditions for $\ell^2$-convergence of the error and residual
are derived. There, conditions
for convergence are different in that a SSAP with respect to $QA$ is not necessarily required on both $R$ and $P$. Rather,
in \cite{air1}, a SSAP with respect to $QA$ (or, equivalently, a WAP with respect to $A^*A$) is required on at least one of $R$ or $P$.
The other operator then must satisfy an additional assumption on approximating the ideal restriction or ideal interpolation operator with some
level of accuracy. That being said, results in Section \ref{sec:numerics} demonstrate that the $\ell$AIR restriction operator, used to approximate ideal restriction in
\cite{air1,air2}, is also quite effective at satisfying approximation properties. Thus, it is possible these two convergence frameworks are
more related than it first appears.}

Several takeaways of the two analyses are consistent. For a robust NS-AMG solver, it is best to consider $R \neq P$. Both
theories indicate that classical AMG approaches to interpolation -- building the range of $P$ to contain error associated with small eigenvalues
-- are applicable in the nonsymmetric setting, when coupled with an appropriate restriction operator. However, care must be taken to build
$R$ and $P$ in a ``compatible'' sense, leading to a stable correction. \tcb{Numerical results in Section \ref{sec:numerics} demonstrate on a
highly nonsymmetric model problem that it is, in fact, singular vectors with larger singular values that increase the norm of the non-orthogonal
coarse-grid correction, modes which are not typically considered when forming multigrid transfer operators.}
Due to the non-orthogonal nature of NS-AMG, both analyses also indicate that modified cycles with additional
relaxation or cycling on coarser grids may be necessary for scalable convergence. The reduction-based NS-AMG algorithms developed
in \cite{air1,air2} have shown promising results on highly nonsymmetric matrices resulting from the discretization of hyperbolic PDEs.
Development of a robust  NS-AMG solver based on theory developed here is ongoing work.

\section*{Acknowledgment} 
The authors acknowledge Alyson Fox for her initial work on convergence of nonsymmetric AMG, which helped motivate some of these results. 
 
\bibliographystyle{plain}
\bibliography{NSTheory_BIB}

\begin{thebibliography}{10}

\bibitem{Brandt:1985um}
A~Brandt, S~F McCormick, and J~Huge.
\newblock {Algebraic Multigrid (AMG) for Sparse Matrix Equations}.
\newblock {\em Sparsity and its Applications}, 257, 1985.

\bibitem{Brezina:2010dm}
M~Brezina, T~A Manteuffel, S~F McCormick, J~Ruge, and G~D Sanders.
\newblock {Towards Adaptive Smoothed Aggregation ($\alpha$SA) for Nonsymmetric
  Problems}.
\newblock {\em SIAM Journal on Scientific Computing}, 32(1):14--39, January
  2010.

\bibitem{Brezzi:2004hf}
F~Brezzi, L~D Marini, and E~S{\"u}li.
\newblock {Discontinuous Galerkin methods for first-order hyperbolic problems}.
\newblock {\em Mathematical models and methods in applied sciences},
  14(12):1893--1903, 2004.

\bibitem{Brooks:1982bl}
A~N Brooks and T~JR Hughes.
\newblock {Streamline Upwind Petrov-Galerkin Formulations for Convection
  Dominated Flows with Particular Emphasis on the Incompressible Navier-Stokes
  Equations}.
\newblock {\em Comput. Methods Appl. Mech. Engrg.}, 32(1-3):199--259, 1982.

\bibitem{Deutsch:1995tc}
F~Deutsch.
\newblock {The angle between subspaces of a Hilbert space}.
\newblock {\em Approximation theory, wavelets and applications}, pages
  107--130, 1995.

\bibitem{Faber:1990ed}
V~Faber, T~A Manteuffel, and S~V Parter.
\newblock {On the Theory of Equivalent Operators and Application to the
  Numerical Solution of Uniformly Elliptic Partial Differential Equations}.
\newblock {\em Advances in applied mathematics}, 11(2):109--163, 1990.

\bibitem{Falgout:2004cs}
R~D Falgout and P~S Vassilevski.
\newblock {On Generalizing the Algebraic Multigrid Framework}.
\newblock {\em SIAM Journal on Numerical Analysis}, 42(4):1669--1693, January
  2004.

\bibitem{Falgout:2005hm}
R~D Falgout, P~S Vassilevski, and L~T Zikatanov.
\newblock {On Two-Grid Convergence Estimates}.
\newblock {\em Numerical Linear Algebra with Applications}, 12(5-6):471--494,
  2005.

\bibitem{fox2018algebraic}
A~Fox and T~A Manteuffel.
\newblock {Algebraic Multigrid for Directed Graph Laplacian Linear Systems
  (NS-LAMG)}.
\newblock {\em Numerical Linear Algebra with Applications}, 25(3):e2152, 2018.

\bibitem{Lottes:2017jz}
J~Lottes.
\newblock {\em {Towards Robust Algebraic Multigrid Methods for Nonsymmetric
  Problems}}.
\newblock Springer Theses. Springer International Publishing, Cham, 2017.

\bibitem{MacLachlan:2014di}
S~P MacLachlan and L~N Olson.
\newblock {Theoretical Bounds for Algebraic Multigrid Performance: Review and
  Analysis}.
\newblock {\em Numerical Linear Algebra with Applications}, 2014.

\bibitem{manteuffel1977tchebychev}
T~A Manteuffel.
\newblock {The Tchebychev Iteration for Nonsymmetric Linear Systems}.
\newblock {\em Numerische Mathematik}, 28(3):307--327, 1977.

\bibitem{air1}
T~A Manteuffel, S~M\"unzenmaier, J~W Ruge, and B~S Southworth.
\newblock Nonsymmetric {R}eduction-based {A}lgebraic {M}ultigrid.
\newblock {\em SIAM Journal on Scientific Computing}, submitted.

\bibitem{Manteuffel:2017}
T~A Manteuffel, L~N Olson, J~B Schroder, and B~S Southworth.
\newblock A root-node based algebraic multigrid method.
\newblock {\em SIAM Journal on Scientific Computing}, 39(5):S723--S756, 2017.

\bibitem{air2}
T~A Manteuffel, J~W Ruge, and B~S Southworth.
\newblock Nonsymmetric algebraic multigrid based on local approximate ideal
  restriction ({$\ell$AIR}).
\newblock {\em SIAM Journal on Scientific Computing}, 40(6):A4105--A4130, Dec.
  2018.

\bibitem{Notay:2000vy}
Y~Notay.
\newblock {A Robust Algebraic Multilevel Preconditioner for Non-Symmetric
  M-Matrices}.
\newblock {\em Numerical Linear Algebra with Applications}, 7(5):243--267,
  2000.

\bibitem{Notay:2010em}
Y~Notay.
\newblock {Algebraic Analysis of Two-Grid Methods: The Nonsymmetric Case}.
\newblock {\em Numerical Linear Algebra with Applications}, 17(1):73--96,
  January 2010.

\bibitem{Notay:2014uc}
Y~Notay.
\newblock {Algebraic Theory of Two-Grid Methods}.
\newblock {\em Numerical Mathematics: Theory, Methods and Applications},
  8(2):168--198, May 2015.

\bibitem{notay2018}
Yvan Notay.
\newblock {Analysis of Two-Grid Methods: The Nonnormal Case}.
\newblock Technical Report GANMN 18-01, 2018.

\bibitem{Ruge:1987vg}
J~Ruge and K~St{\"u}ben.
\newblock {Algebraic Multigrid}.
\newblock {\em Multigrid methods}, 3(13):73--130, 1987.

\bibitem{Sala:2008cv}
M~Sala and R~S Tuminaro.
\newblock {A New Petrov{\textendash}Galerkin Smoothed Aggregation
  Preconditioner for Nonsymmetric Linear Systems}.
\newblock {\em SIAM Journal on Scientific Computing}, 31(1):143--166, January
  2008.

\bibitem{Seibold}
B.~Seibold.
\newblock {Performance of Algebraic Multigrid Methods for Nonsymmetric Matrices
  Arising in Particle Methods}.
\newblock {\em Numerical Linear Algebra with Applications}, 17(2-3):433--451,
  2010.

\bibitem{Szyld:2006bg}
D~B Szyld.
\newblock {The Many Proofs of an Identity on the Norm of Oblique Projections}.
\newblock {\em Numerical Algorithms}, 42(3-4):309--323, October 2006.

\bibitem{VanLoan:1976hl}
C~F Van~Loan and {1976}.
\newblock {Generalizing the Singular Value Decomposition}.
\newblock {\em SIAM Journal on Numerical Analysis}, 13(1):76--83, March 1976.

\bibitem{Van:2001bw}
P~Van{\v e}k, M~Brezina, and J~Mandel.
\newblock {Convergence of Algebraic Multigrid based on Smoothed Aggregation}.
\newblock {\em Numerische Mathematik}, 2001.

\bibitem{Vassilevski:2008wd}
P~S Vassilevski.
\newblock {\em {Multilevel Block Factorization Preconditioners}}.
\newblock Matrix-based Analysis and Algorithms for Solving Finite Element
  Equations. Springer Science {\&} Business Media, October 2008.

\bibitem{Vassilevski:2010vy}
P~S Vassilevski.
\newblock {Lecture Notes on Multigrid Methods}.
\newblock {\em Lawrence Livermore National Laboratory}, 2010.

\bibitem{Wiesner:2014cy}
T~A Wiesner, R~S Tuminaro, W~A Wall, and M~W Gee.
\newblock {Multigrid Transfers for Nonsymmetric Systems Based on Schur
  Complements and Galerkin Projections}.
\newblock {\em Numerical Linear Algebra with Applications}, 21(3):415--438,
  June 2013.

\bibitem{Yavneh:2012fb}
I~Yavneh and M~Weinzierl.
\newblock {Nonsymmetric Black Box multigrid with Coarsening by Three}.
\newblock {\em Numerical Linear Algebra with Applications}, 19(2):194--209,
  January 2012.

\bibitem{Zikatanov:2008jp}
L~T Zikatanov.
\newblock {Two-Sided Bounds on the Convergence Rate of Two-Level Methods}.
\newblock {\em Numerical Linear Algebra with Applications}, 15(5):439--454,
  2008.

\end{thebibliography}

\newpage
\appendix
\section*{Appendix}

{\color{black}
\begin{proof}[Proof of Theorem \ref{th:FAP}]
The first part is found by noting that, for any $\kappa \geq \eta$ and $0\leq \alpha \leq \beta$
\begin{equation}
\|\mathbf{v} - P\mathbf{v}_c\|_{\mathcal{A}^{\kappa}}^2 \leq \| \mathcal{A}^{\kappa-\eta}\| \|\mathbf{v} - P\mathbf{v}_c\|_{\mathcal{A}^\eta}^2
~~\mbox{and}~~
\langle \mathcal{A}^{2\beta} \mathbf{v}, \mathbf{v}\rangle \| \leq \| \mathcal{A}^{2(\beta-\alpha)}\| \langle \mathcal{A}^{2\alpha}\mathbf{v}, \mathbf{v}\rangle.
\end{equation}
For the second result, note that if $\eta \leq \beta$ then, from the first part, P satisfies a FAP($\beta,\beta$) with constant 
$K_{P,\beta,\beta} \leq K_{P,\beta,\eta}$. Next, we prove that if $P$ satisfies a FAP($\beta,\beta$) with constant
$K_{P,\beta,\beta}$, then $P$ satisfies a FAP($\beta,0$)  with  constant $K_{P,\beta,0} \leq K_{P,\beta,\beta}^2$.

Let $\Pi_\beta$ denote the $\mathcal{A}^\beta$-orthogonal projection onto the range of $P$. By assumption
\begin{equation*}
\| (I-\Pi_\beta) \mathbf{v}\|_{A^\beta}^2 \leq \frac{ K_{P,\beta,\beta} }{\|A\|^\beta} \langle A^{2\beta} \mathbf{v},\mathbf{v}\rangle.
\end{equation*}
Let $P\mathbf{v}_c = \Pi_\beta \mathbf{v}$. 
Write,
\begin{equation}\label{eqn:lem2eqn1}
\| \mathbf{v}- P\mathbf{v}_c \|^2 = \|(I-\Pi_\beta)\mathbf{v}\|^2 =
\langle A^\beta(\mathbf{v}-P\mathbf{v}_c), A^{-\beta}(\mathbf{v}-P\mathbf{v}_c)\rangle.
\end{equation}
Now, denote $\mathbf{w} = A^{-\beta}(\mathbf{v}-P\mathbf{v}_c)$ and $P\mathbf{w}_c = \Pi_\beta \mathbf{w}$. Note that
\begin{equation*}
\langle A^\beta (\mathbf{v}-P\mathbf{v}_c), P\mathbf{z} \rangle= \langle A^\beta(I-\Pi_\beta)\mathbf{v}, P\mathbf{z}\rangle = 0,
\end{equation*}
for all $\mathbf{z}$. Applying an orthogonality argument, the Cauchy-Schwarz inequality, and a FAP($\mathcal{A},\beta,\beta)$
in the following steps, respectively, yields
\begin{align} \nonumber
\langle A^\beta(\mathbf{v}-P\mathbf{v}_c), A^{-\beta}(\mathbf{v}-P\mathbf{v}_c)\rangle&= \langle A^\beta(\mathbf{v}-P\mathbf{v}_c), A^{-\beta}(\mathbf{v}-P\mathbf{v}_c)-P\mathbf{w}_c\rangle, \\
\nonumber
&\leq \| \mathbf{v}-P\mathbf{v}_c\|_{A^\beta} \| A^{-\beta}(\mathbf{v}-P\mathbf{v}_c) -P\mathbf{w}_c \|_{A^\beta}, \\
\nonumber
&\leq \| \mathbf{v}-P\mathbf{v}_c\|_{A^\beta}\frac{\sqrt{K_{P,\beta,\beta}}}{\|A\|^{\beta/2}} \| A^\beta (A^{-\beta}(\mathbf{v}-P\mathbf{v}_c)) \|, \\
\label{eqn:lem2eqn2}
&= \| \mathbf{v}-P\mathbf{v}_c\|_{A^\beta}\frac{\sqrt{K_{P,\beta,\beta}}}{\|A\|^{\beta/2}} \| \mathbf{v}-P\mathbf{v}_c\|.
\end{align}
Combining \eqref{eqn:lem2eqn1} and \eqref{eqn:lem2eqn2} and again applying the FAP-$(A,\beta,\beta)$ yields
\begin{equation}
\|(I-\Pi_0)\mathbf{v}\|\leq\| \mathbf{v}-P\mathbf{v}_c \| \leq \frac{\sqrt{K_{P,\beta,\beta}}}{\|A\|^{\beta/2}} \| \mathbf{v}-P\mathbf{v}_c\|_{A^\beta} \leq \frac{K_{P,\beta,\beta}}{\|A\|^{\beta}} \| A^\beta \mathbf{v} \|.
\end{equation}
Thus, $P$ satisfies a FAP($\beta,0$) with constant $K_{P,\beta,0} \leq K_{P,\beta,\beta}^2 \leq K_{P\beta,\eta}^2$.

Again applying the first part, for any $0 \leq \kappa \leq \eta$,  $P$ satisfies a FAP($\beta, \kappa$) with constant
$K_{P,\beta,\kappa} \leq K_{P,\beta, 0} \leq K_{P,\beta,\beta}^2 \leq K_{P, \beta,\eta}^2$. This completes the proof.
\end{proof}
}

\begin{proof}[Proof of Lemma \ref{lem:bound_block}]
Starting with the lower bound, assume positive constants: $a_0,b,c,d_0 > 0$. An $\epsilon$-inequality can be
used to bound below in norm:
{\small
\begin{align*}
\left\|\begin{pmatrix}~~A & -B \\ -C & ~~D\end{pmatrix}\begin{pmatrix}\mathbf{x}\\\mathbf{y}\end{pmatrix}\right\|^2 & = \|A\mathbf{x} - B\mathbf{y}\|^2 + \|C\mathbf{x} - D\mathbf{y}\|^2 \\
& = \|A\mathbf{x}\|^2 - 2\langle A\mathbf{x},B\mathbf{y}\rangle + \|B\mathbf{y}\|^2 + \|C\mathbf{x}\|^2 - 2\langle C\mathbf{x},D\mathbf{y}\rangle + \|D\mathbf{y}\|^2 \\
& \geq (1 - \epsilon_1)\|A\mathbf{x}\|^2 - (\sfrac{1}{\epsilon_1}-1)\|B\mathbf{y}\|^2 + (1 - \epsilon_2)\|D\mathbf{y}\|^2 - (\sfrac{1}{\epsilon_2}-1)\|C\mathbf{x}\|^2  \\
& \geq \Big[ a_0^2(1-\epsilon_1) - c^2(\sfrac{1}{\epsilon_2}-1)\Big]\|\mathbf{x}\|^2 + 	\Big[ d_0^2(1-\epsilon_2) - b^2(\sfrac{1}{\epsilon_1}-1)\Big]\|\mathbf{y}\|^2 
\end{align*}
}
for any $\epsilon_1,\epsilon_2\in(0,1]$. Note that the upper bound on $\epsilon_1$ and $\epsilon_2$ is necessary to keep the leading
constants on $\|A\mathbf{x}\|^2$ and $\|D\mathbf{y}\|^2$ positive because we bounded these from below, and vice versa for $\|B\mathbf{y}\|^2$ and
$\|C\mathbf{x}\|^2$. This leads to a system of constraints 
\begin{align}
\begin{split}\label{eq:stab_constraint}
C_1(\epsilon_1,\epsilon_2) := a_0^2(1-\epsilon_1) - c^2(\sfrac{1}{\epsilon_2}-1) & > 0, \\
C_2(\epsilon_1,\epsilon_2) := d_0^2(1-\epsilon_2) - b^2(\sfrac{1}{\epsilon_1}-1) & > 0,
\end{split}
\end{align}
for some $\epsilon_1,\epsilon_2\in(0,1]$. The boundary of these constraints in the $(\epsilon_1,\epsilon_2)$-plane is
given by the functions 
\begin{align*}
\widehat{\epsilon_2}(\epsilon_1)  = \frac{c^2}{c^2 + a_0^2(1-\epsilon_1)}, \hspace{4ex}
\widetilde{\epsilon_2}(\epsilon_1) = 1 + \frac{b^2}{d_0^2} - \frac{b^2}{d_0^2\epsilon_1},
\end{align*}
with the region of points satisfying the constraints bounded below by $\widehat{\epsilon_2}$ and above by $\widetilde{\epsilon_2}$. 
A little algebra shows that $\widehat{\epsilon_2}$ is concave up, $\widetilde{\epsilon_2}$ concave down, and both functions are monotonically
increasing over $(0,1]$ with a crossover point at $\widehat{\epsilon_2}(1) = \widetilde{\epsilon_2}(1) = 1$. It follows that there exists
some region within $(0,1)\times(0,1)$ (constraints on $\epsilon_1$ and $\epsilon_2$) that satisfies \eqref{eq:stab_constraint} if and only if
$\widehat{\epsilon_2}'(1) > \widetilde{\epsilon_2}'(1)$, which reduces to $a_0d_0 > bc.$

The maximum bound is obtained by setting the leading constants on $\|\mathbf{x}\|^2$ and $\|\mathbf{y}\|^2$ equal. Thus we will consider a constrained
maximization over $C_1$ such that $C_1 = C_2$ (or vice versa). Since we are maximizing the intersection of two convex functionals,
which is also convex, the maximum is unique. Thus consider $\epsilon_2(\epsilon_1)$ and denote $\epsilon_2' :=
\frac{\partial\epsilon_2}{\partial\epsilon_1}$. Then, at the maximum, we must have $\frac{\partial}{\partial\epsilon_1}
C_1(\epsilon_1,\epsilon_2(\epsilon_1)) = \frac{\partial}{\partial\epsilon_1} C_1(\epsilon_1,\epsilon_2(\epsilon_1)) = 0$:
\begin{align*}
-a_0^2 + \frac{c^2}{\epsilon_2^2}\epsilon_2' & = 0 \hspace{3ex}\implies\hspace{3ex} \epsilon_2' = \frac{a_0^2}{c^2}\epsilon_2^2, \\
-d_0^2 \epsilon_2' + \frac{b^2}{\epsilon_1^2} & = 0 \hspace{3ex}\implies\hspace{3ex} \epsilon_2' = \frac{b^2}{d_0^2\epsilon_1^2}.
\end{align*}
Setting the functions for $\epsilon_2'$ equal leads to the constraint $\epsilon_2 = \frac{bc}{a_0d_0\epsilon_1}$, and plugging into
$C_1$ and $C_2$ gives
\begin{align*}
C_1(\epsilon_1) & = a_0^2 + c^2 - \epsilon_1\left(a_0^2 + \frac{a_0cd_0}{b}\right), \\
C_2(\epsilon_1) & = d_0^2 + b^2 - \frac{1}{\epsilon_1}\left(\frac{bcd_0}{a_0} + b^2\right).
\end{align*}
Setting $C_1 = C_2$ leads to a quadratic function in $\epsilon_1$:
\begin{align*}
\epsilon_1^2\left(a_0^2 + \frac{a_0cd_0}{b}\right) + \epsilon_1\left(b^2 + d_0^2 - a_0^2 - c^2\right) - b\left(\frac{cd_0}{a_0} + b\right) & = 0.
\end{align*}
Because $a_0,b,c,d_0 > 0$, we have $-b\left(\frac{cd_0}{a_0} + b\right) < 0$ and, thus, there exists exactly one positive root, given by
\begin{align*}
\epsilon_1 & = \frac{(a_0^2+c^2-b^2-d_0^2) + \sqrt{(a_0^2+c^2-b^2-d_0^2)^2 + 4(a_0b+cd_0)^2}}{2\left(a^2 + \frac{a_0cd_0}{b}\right)}.
\end{align*}
Plugging into $C_1$ gives
\begin{align}
C_1(\epsilon_1) & = C_2(\epsilon_1) = \frac{a_0^2 + b^2 + c^2 + d_0^2 - \sqrt{(a_0^2+c^2-b^2-d_0^2)^2 + 4(a_0b+cd_0)^2}}{2},\label{eq:lower_bound}
\end{align}
where $\eta_0 := C_1(\epsilon_1)$. Setting $b=0$ or $c=0$ and repeating the above process leads to a lower bound consistent with
setting $b=0$ or $c=0$ in \eqref{eq:lower_bound}.

A similar derivation can be used for an upper bound. Let us start by assuming positive bounds, $a_1,b,c,d_1>0$. We bound in norm
from above, again using an $\epsilon$-inequality, and seek to minimize the intersection of 
\begin{align*}
C_3(\epsilon_1,\epsilon_2) := a_1^2(1+\epsilon_1) + c^2(1 + \sfrac{1}{\epsilon_2}),\\
C_4(\epsilon_1,\epsilon_2) := d_1^2(1+\epsilon_2) + b^2(1 + \sfrac{1}{\epsilon_1}).
\end{align*}
Each of these are concave up, convex functionals in the positive $(\epsilon_1,\epsilon_2)$-plane (note, there are no
constraints on the constants for this region to exist), and a minimum is attained when$\frac{\partial}{\partial\epsilon_1}
C_3(\epsilon_1,\epsilon_2(\epsilon_1)) = \frac{\partial}{\partial\epsilon_1} C_4(\epsilon_1,\epsilon_2(\epsilon_1)) = 0$.
This leads to a quadratic functional in $\epsilon_1$:
\begin{align*}
\epsilon_1^2\left(a_1^2 + \frac{a_1cd_1}{b}\right) + \epsilon_1\left(a_1^2 + c^2 - b^2 - d_1^2\right) - b\left(\frac{cd_1}{a_1} + b\right) & = 0,
\end{align*}
with one positive root by Descartes' rule of signs and the assumption $a_1,b,c,d_1>0$. The root is given by 
\begin{align*}
\epsilon_1 & = \frac{(b^2+d_1^2-a_1^2-c_1^2) + \sqrt{(a_1^2+c^2-b^2-d_1^2)^2 + 4(a_1b+cd_1)^2}}{2\left(a_1^2 + \frac{a_1cd_1}{b}\right)},
\end{align*}
which we can plug into $C_3$ and $C_4$ to solve for an upper bound
\begin{align}
\eta_1 = \frac{a_1^2 + b^2 + c^2 + d_1^2 + \sqrt{(a_1^2+c^2-b^2-d_1^2)^2 + 4(a_1b+cd_1)^2}}{2}.\label{eq:upper_bound}
\end{align}

In the case that some of $a_1,b,c$, or $d_1$ are equal to zero, it is straightforward to use a single $\epsilon$-inequality
to derive an upper bound, and verify that this bound is equivalent to plugging the appropriate zeros into \eqref{eq:upper_bound}.
\end{proof}

\end{document}